\newtheorem{thm}{Theorem}[section]
\newtheorem{lem}[thm]{Lemma}
\newtheorem{prop}[thm]{Proposition}
\newtheorem{cor}[thm]{Corollary}
\theoremstyle{definition}
\newtheorem{defn}[thm]{Definition}
\newtheorem{eje}[thm]{Example}
\newtheorem{rmk}[thm]{Remark}
\newtheorem{rmks}[thm]{Remarks}
\newtheorem{assu}[thm]{Assumption}
\numberwithin{equation}{section}
\definecolor{col}{rgb}{0,0,0.6}
\newcommand{\N}{\mathbb{N}}
\newcommand{\R}{\mathbb{R}}
\newcommand{\M}{\mathcal{M}}
\newcommand{\Om}{\mathcal{O}}
\newcommand{\A}{\mathbb{A}}
\newcommand{\dd}{\textsf{d}}
\newcommand{\ep}{\varepsilon}
\newcommand{\des}{\displaystyle}
\begin{document}
\title[]
{Tracking nonautonomous attractors in singularly perturbed systems of ODEs with dependence on the fast time}
\author[I.P.~Longo]{Iacopo P. Longo}
\author[R.~Obaya]{Rafael Obaya}
\author[A.M.~Sanz]{Ana M. Sanz}
\address[I.P. Longo]{Department of Mathematics, Imperial College London, 635 Huxley Building, 180 Queen's Gate South Kensington Campus, London SW7 2AZ, United Kingdom.} \email{iacopo.longo@imperial.ac.uk}
\address[R. Obaya]{Departamento de Matem\'{a}tica
Aplicada, Escuela de Ingenier\'{\i}as Indus\-tria\-les (Sede Doctor Mergelina), Universidad de Valladolid,
47011 Valladolid, Spain, and member of IMUVA, Instituto de Investigaci\'{o}n en
Matem\'{a}ticas, Universidad de Va\-lla\-dolid, Spain.}
 \email{rafael.obaya@uva.es}
\address[A.M. Sanz]{Departamento de Did\'{a}ctica de las Ciencias Experimentales, Sociales y de la Matem\'{a}tica,
Facultad de Educaci\'{o}n y Trabajo Social, Universidad de Valladolid, 47011 Valladolid, Spain,
and member of IMUVA, Instituto de Investigaci\'{o}n en  Mate\-m\'{a}\-ti\-cas, Universidad de
Valladolid, Spain.} \email{anamaria.sanz@uva.es}
\thanks{All authors were partly supported by MICIIN/FEDER project
PID2021-125446NB-I00 and by the University of Valladolid under project PIP-TCESC-2020. I.P.~Longo was also partly supported  by UKRI under the grant agreement EP/X027651/1.}
\subjclass[2020]{34A34, 34D45, 37B55, 65L11}
\date{}
\begin{abstract}
New results on the behaviour of the fast motion in slow-fast systems of ODEs with dependence on the fast time are given in terms of tracking of nonautonomous attractors. Under quite general assumptions, including the uniform ultimate boundedness of the solutions of the layer problems, inflated pullback attractors are considered. In general, one cannot disregard the inflated version of the pullback attractor, but it is possible under the continuity of the fiber projection map of the attractor.
The problem of the limit of the solutions of the slow-fast system at each fixed positive value of the slow time is also treated and
in this formulation the critical set is given by the union of the fibers of the pullback attractors. The results can be seen as extensions of the classical Tikhonov theorem to the nonautonomous setting.
\end{abstract}
\keywords{Singularly perturbed ODEs, nonautonomous fast dynamics, skew-product semiflows,  nonautonomous attractors}
\maketitle
\section{Introduction}\label{secintro}
The paper analyses singularly perturbed systems of ODEs where the fast equation exhibits dependence on the fast time. Namely, the systems take the form
	\begin{equation*}
\left\{\begin{array}{l} \dot{x}  = f(x,y)\,,
  \\[0.1cm]
 \ep\,\dot{y}  = g(x,y,t/\ep )\,,
\end{array}\right.
\end{equation*}
where the phase variables $(x,y)\in\R^n\times\R^m$ respectively denote the slow and fast variables of the problem,  $t$ is the slow time and  $\ep>0$ is a small parameter. Taking the change of scale $\tau =t/\ep$, then $\tau$  denotes the fast time and the previous equations become
\begin{equation*}
\left\{\begin{array}{l} x'  = \ep\,f(x,y)\,,
  \\
 y' = g(x,y,\tau )\,.
\end{array}\right.
\end{equation*}
Initial conditions $x(0) = x_0$, $y(0) = y_0$  and a positive  value of time $t_0>0$ in the slow timescale are fixed. In this work we apply methods of nonautonomous dynamical systems to investigate the  behaviour  of the fast motion $y_\ep(\tau)$ for $\tau \in (0,t_0/\ep]$ when the parameter $\ep$ is small,  assuming natural and simple  hypotheses on the slow-fast systems above. In order to motivate and explain the content of the paper, we briefly describe some previous important results in the literature.

We begin by considering the case in which the slow-fast system is autonomous, i.e., the map $g$ is independent of $\tau$. A crucial problem in  singular perturbation theory is to identify the limits of the solutions $(x_\ep(t),y_\ep(t))$ at each fixed value of the slow time $t\in (0,t_0]$ as $\ep\to 0$. A remarkable answer to this question was given by Tikhonov~\cite{paper:Tikh} in terms of the reduced order system $\dot{x}=f(x,y)$, $0=g(x,y)$. Assuming that the algebraic equation can be inverted (at least locally) providing  $y=\phi(x)$ which defines a manifold of  (locally) asymptotically stable fixed points of the layer equations $\dot{y}=g(x,y)$,  uniformly in $x$, and that the initial datum $y_0$ lies in the domain of attraction of the equilibrium $\phi(x_0)$ for the layer equation for $x_0$, Tikhonov's result asserts that $x_\ep(t)$ converges uniformly for $t \in [0,t_0]$  to $x_0(t)$, the solution of the slow equation  $\dot{x}=f(x,\phi(x))$, $x(0)=x_0$. Moreover, the solutions $y_\ep(t)$ converge   to $\phi(x_0(t))$ for $t \in (0,t_0]$. In this formulation  the set $C=\{(x,y)\mid g(x,y)=0\}$  is called the critical manifold, or the critical set if it is not smooth.  A detailed exposition of this theory and some other interesting results that extend it, can be found in O'Malley~\cite{book:OMalley}, Tikhonov et al.~\cite{book:TVS} and Wasow~\cite{book:Wasow}. Later, assuming that the critical points $(x,\phi(x))$ are hyperbolic, i.e., all real parts of the eigenvalues of the Jacobian matrix  $\partial_y g( x, \phi(x))$ are non-zero, Fenichel~\cite{paper:FE79} provides  a more detailed description of the dynamics of the slow-fast systems, proving the existence of normally hyperbolic invariant manifolds  in a neighbourhood of the critical manifold when the parameter $\ep$ is small enough. An introduction to the combined applications of the  Tikhonov and Fenichel results can be found in Berglund and Gentz~\cite{book:BG}, Kuehn~\cite{book:ku} and Verhulst~\cite{book:Ve}.

The first relevant references (up to our knowledge) dealing with the limit problem as $\ep\to 0$ of the solutions $(x_\ep(t),y_\ep(t))$ valid for  slow-fast systems with dependence on the fast time are Artstein and Vigodner~\cite{paper:AV96} and Artstein~\cite{paper:ZA99}. They  formulate  the theory in terms of Carath\'{e}odory dependence of $g$ on $\tau$  and  mild conditions that imply that $\mathcal{H}$, the hull of $g$, defined by the closure for a weak topology of the set of translated maps $ (g{\cdot}s)(x,y,\tau)=g(x,y,s+\tau)$, $s\in\R$ is metric and compact. The most important hypothesis required in these papers is a kind of uniform boundedness of the solutions of the layer equations, which end up being included in balls that remain uniformly bounded for $x$ in compact subsets of $\R^n$. They prove that the solutions $x_\ep(t)$ converge  to the compact set of solutions of the  multivalued differential equations $\dot{x}\in F(x):=\big\{ \int f(x,y)d\mu(y) \mid \mu \; x\hbox{-invariant}\big\}$, $x(0)=x_0$. In fact,  the invariant measures of the layer equations $y'=g(x,y,\tau)$ for each $x$ provide a new approach to the critical set in this nonautonomous setting. Precisely, the description of the behaviour of the fast variables is given in terms of statistical convergence, which means convergence of the measures associated to the solutions $y_\ep(t)$ to a precise set of measures, in a weak topology. This has been explained in more detail at the beginning of Section~\ref{secTracking}.

In this paper, for the sake of simplicity, we assume that $g$ is bounded and uniformly continuous on  $K\times \R$ for compact sets $K \subset \R^{n+m}$,  which implies that $\mathcal{H}$, the hull of $g$,  is compact for the compact-open metric topology and the translation flow $(\tau,h)\in\R\times\mathcal{H} \mapsto h{\cdot}\tau\in\mathcal{H}$ is continuous. In the first part of the work,  we assume that the solutions of the $x$-parametric families over the hull are globally defined in the future.  We reformulate the boundedness condition in~\cite{paper:AV96} and~\cite{paper:ZA99} requiring the solutions of  the layer equations $y'=g(x,y,\tau)$ for $x \in K_0$ to be uniformly ultimately bounded, for each compact subset $K_0 \subset \R^n$. As a first consequence of this  assumption, we  deduce that the global skew-product semiflow  on $\mathcal{H}\times K_0 \times \R^m$ induced by the solutions, which can be called {\it the layered semiflow},  admits a global (pullback) attractor $\A=\bigcup_{(h,x)\in \mathcal{H}\times K_0} \{(h,x)\}\times A_{(h,x)} \subset \mathcal{H}\times K_0 \times \R^m$ (see Kloeden and Rasmussen~\cite{klra}). Aiming at obtaining properties of forwards convergence of the solutions,  we introduce  the parametrically inflated pullback attractor $\{A_{(h,x)}[\delta]\}_{(h,x)\in \mathcal{H}\times K_0}$ ($\delta>0$) in the terms of Wang et al. \cite{walikl}. The  main conclusion of this part is that the solutions $y_\ep(\tau)$ track the fibers of the inflated pullback attractor when the parameter $\ep$ is small and $\tau \in [T,t_0/\ep]$ for a certain $T>0$. More precisely, if    $x_{\ep_k}(t)$ converges to $x_0(t)$ uniformly  on $[0,t_0]$ for a sequence $\ep_k \rightarrow 0$, for a tubular compact set $K_0\subset \R^n$ around the slow limit $x_0(t)$, fixed a $\delta>0$, there exist $T>0$ and $k_0>0$ such that $2T<t_0/\ep_{k_0}$ and for all $k\geq k_0$,  ${\rm dist}\big(y_{\ep_k}(\tau),A_{(g{\cdot}\tau,x_0(\ep_k\tau))}[\delta]\big)\leq  \delta$ for every $\tau \in [T,t_0/\ep_k]$. In addition, if $g{\cdot}(t_0/\ep_k)\to h_0$ in $(\mathcal{H},d)$ as $k \to \infty$, then $\lim_{k\to \infty}  {\rm dist}\big(y_{\ep_k}(t_0/\ep_k),A_{(h_0,x_0(t_0))}\big) =0$, which offers an answer to the question on the limit of the solutions of the slow-fast systems at a fixed time. An example is given to show that in general the inflated version of the pullback attractor is indispensable, but as a relevant second result we prove that when the compact fibers $A_{(h,x)}$ of the pullback attractor vary continuously  with respect to $(h,x) \in \mathcal{H}\times K_0$ for the Hausdorff metric, then we can  avoid the use of the parametrically inflated pullback attractor, since then
the fast motion tracks the fibers of the pullback attractor itself.

In some sense we can say that these results give a nonautonomous version of the Tikhonov  theorem. Note that although the classical theory requires asymptotically stable equilibria, uniformly in $x$, in the nonautonomous case the process given by $g$ and each $x$ has a pullback attractor which in principle has no properties of forward attraction. However, the counterpart of the critical set in our approach relies on the union of the fibers of the pullback attractor of the layered semiflow, $\{(x,\bigcup_{h\in\mathcal{H}}A_{(h,x)})\mid x\in K_0\}$, and the set $\mathcal{A} :=\bigcup_{(h,x)\in\mathcal{H}\times K_0}A_{(h,x)}$ is the {\it uniform attractor} for the layered semiflow, and it enjoys some forward attraction properties (see, e.g., Carvalho et al.~\cite{book:CLR}). The internal dynamics of this critical set can be (locally or globally) uniformly stable, sensitive  with respect to initial conditions, chaotic or even a transition between them, and it evolves with the fast time. For this reason, in this nonautonomous formulation,  the term {\it slow manifold} for some invariant subset  close to this critical set seems not to be appropriate.

In the second part of the work, the general assumptions on the existence and  boundedness of the solutions of the layer equations are relaxed. Namely, the layered semiflow is defined with base on $\Om(g)\times K_0$ for the omega-limit set of $g$ and a compact set $K_0$ of $\R^n$, and the uniform ultimate boundedness of solutions of the layer equations on  compact sets $K_0$ is required on $[0,\infty)$. We check that these conditions are equivalent to the hypotheses in the papers~\cite{paper:AV96} and~\cite{paper:ZA99} ,  and in addition they imply the assumptions in the first part when $g\in \Om(g)$. In any case, we can adapt the methods and conclusions of the previous part to this more general setting. As it turns out, the solutions $y_\ep(\tau)$ track the fibers of the parametrically  inflated pullback attractor along pieces of orbits inside  $\Om(g)$. As before,  when $A_{(h,x)}$ varies continuously with respect to $(h,x)\in \Om(g)\times K_0$, the fast motion tracks the fibers of the  pullback attractor and the inflated version is not required. The same comments above regarding the limit of $(x_\ep(t_0), y_\ep(t_0))$ and the critical set are equally correct after including the obvious changes required in this setting.

The paper is organised in five sections. Section~\ref{secpreli} collects some preliminaries  in nonautonomous dynamics. Sections~\ref{secSlowfast} and~\ref{secTracking} correspond to the aforementioned first part of the paper, exploiting the imposed conditions in Section~\ref{secSlowfast} and proving the main results in Section~\ref{secTracking}, together with some numerical simulations to illustrate the theoretical results in Subsection~\ref{sec:subsec numeric}. Finally, Section~\ref{secOmegalim} corresponds to the second part of the work, where the results are presented under relaxed assumptions.
\section{Preliminaries}\label{secpreli}
In this section we include some preliminaries on
topological dynamics and different types of attractors for nonautonomous dynamical systems.

Let $(P,d)$ be a compact
metric space. A real {\em continuous flow\/} $(P,\theta,\R)$ is a dynamical system given by  a continuous map $\theta: \R\times P \to  P,\;
(t,p)\mapsto \theta(t,p)=\theta_t(p)$ which satisfies $\theta_0=\text{Id}$ and $\theta_{t+s}=\theta_t\circ\theta_s$ for each $s$, $t\in\R$.
The set $\{ \theta_t(p) \mid t\in\R\}$ is the {\em orbit\/}
of the point $p$. We say that a subset $P_1\subset P$ is {\em
$\theta$-invariant\/} if $\theta_t(P_1)=P_1$ for every $t\in\R$.
The flow $(P,\theta,\R)$ is {\em minimal\/} if $P$ does not contain properly any other
compact $\theta$-invariant set; equivalently,  if every
orbit is dense. The flow is  {\em almost periodic\/} if the family of maps $\{\theta_t\}_{t\in \R}:P\to P$ is uniformly equicontinuous.
\par
A finite regular measure defined on the Borel sets of $P$ is called
a Borel measure on $P$. A normalized Borel measure $\mu$  on
$P$ is {\em invariant\/} for the flow $(P,\theta,\R)$, or it  is {\em $\theta$-invariant\/} if $\mu(\theta_t(P_1))=\mu(P_1)$ for every Borel subset
$P_1\subset P$ and every $t\in \R$.  If, in
addition, $\mu(P_1)=0$ or $\mu(P_1)=1$ for every
$\theta$-invariant Borel subset $P_1\subset P$, then it is {\em ergodic\/}.
$(P,\theta,\R)$ is {\em uniquely ergodic\/} if it has a
unique normalized invariant measure, which is then necessarily
ergodic. A minimal and almost periodic flow $(P,\theta,\R)$ is uniquely ergodic.
\par
Let $\R_+=\{t\in\R\,|\,t\geq 0\}$. Given a continuous compact flow $(P,\theta,\R)$ and a
complete metric space $(X,\dd)$, a continuous {\em skew-product semiflow\/} $(P\times
X,\pi,\,\R_+)$ on the product space $P\times X$ is determined by a continuous map
\begin{equation*}
 \begin{array}{cccl}
 \pi \colon &  \R_+\times P\times X& \longrightarrow & P\times X \\
 &(t,p,x) & \mapsto &(p{\cdot}t,u(t,p,x))
\end{array}
\end{equation*}
 which preserves the flow on $P$, called the {\em base flow\/} and denoted by $\theta_t(p)=p{\cdot}t$ for simplicity.
 The semiflow property means:
\begin{enumerate}
\renewcommand{\labelenumi}{(\roman{enumi})}
\item $\pi_0=\text{Id},$
\item $\pi_{t+s}=\pi_t \circ \pi_s$ for all  $t$, $s\geq 0\,,$
\end{enumerate}
where $\pi_t(p,x)=\pi(t,p,x)$ for each $(p,x) \in P\times X$ and $t\in \R_+$.
This leads to the so-called {\em semicocycle\/} property of the {\em cocycle mapping\/} $u$,
\begin{equation*}
 u(t+s,p,x)=u(t,p{\cdot}s,u(s,p,x))\quad\mbox{for $s,t\ge 0$ and $(p,x)\in P\times X$}.
\end{equation*}
\par
The set $\{ \pi(t,p,x)\mid t\geq 0\}$ is the {\em semiorbit\/} of
the point $(p,x)$. A subset  $K$ of $P\times X$ is {\em positively
invariant\/} if $\pi_t(K)\subseteq K$
for all $t\geq 0$ and it is $\pi$-{\em invariant\/} if $\pi_t(K)= K$
for all $t\geq 0$.  A compact $\pi$-invariant set $K$ for the
semiflow  is {\em minimal\/} if it does not contain any nonempty
compact $\pi$-invariant set  other than itself.
\par
If $K\subset P\times X$ is a compact positively invariant set, we denote by $\mathcal{M}_{\rm inv}(K)$ the set of normalized Borel measures $\mu$ on $K$ which are $\pi$-invariant, that is, such that $\mu(B)=\mu(\pi_t^{-1}(B))$ for every Borel set $B$ in $K$ and  $t\geq 0$.

The reader can find in  Ellis~\cite{elli}, Shen and Yi~\cite{shyi} and references therein, a
more in-depth survey on topological dynamics.
\par
Since nonautonomous attractors play a crucial role in this paper, we review some basic facts. Some good references are the books by Carvalho et al.~\cite{book:CLR} and Kloeden and Rasmussen~\cite{klra}. For the sake of completeness, first of all we recall the definition and some properties of the Hausdorff semidistance between subsets of a metric space $(Y,d)$. Given $Y_1,Y_2\subset Y$, the Hausdorff semidistance is defined by
\begin{equation*}
  {\rm dist}(Y_1,Y_2) := \sup_{y_1\in Y_1}\inf_{y_2\in Y_2} d(y_1,y_2)=\sup_{y_1\in Y_1} {\rm dist}(y_1,Y_2)\,,
\end{equation*}
where ${\rm dist}(y_1,Y_2)$ denotes the usual distance between the point $y_1$ and the set $Y_2$, with no conflict in the notation when $Y_1$ is a singleton. Sometimes in the literature it is written ${\rm dist}_Y$ to indicate in which space we are, but we will omit this fact. The Hausdorff metric between two compact subsets $Y_1,Y_2$ of $Y$ is  defined by
\[
d_{H}(Y_1,Y_2) := \max\{{\rm dist}(Y_1,Y_2),{\rm dist}(Y_2,Y_1)\}\,.
\]
We will be using that, if $Y_1\subset Z_1$, then ${\rm dist}(Y_1,Y_2)\leq {\rm dist}(Z_1,Y_2)$; and if $Z_2\subset Y_2$, then ${\rm dist}(Y_1,Y_2)\leq {\rm dist}(Y_1,Z_2)$, for subsets $Y_1, Y_2, Z_1, Z_2$ of $Y$.

For a skew-product semiflow $\pi$ on $P\times X$ with a compact base $P$, {\em the global attractor\/}   $\A\subset P\times X$, when it exists,  is an invariant compact set attracting a certain class of sets in $P\times X$ forwards in time; namely,
\[
\lim_{t\to\infty} {\rm dist}\big(\pi_t(P\times X_1),\A\big)=0 \quad\text{for each}\; X_1\in \mathcal{D}(X)\,,
\]
for the Hausdorff semidistance ${\rm dist}$. The standard  choices for $\mathcal{D}(X)$ are either the class of bounded subsets $\mathcal{D}_b(X)$ of $X$ or that of compact subsets $\mathcal{D}_c(X)$ (see Cheban et al.~\cite{chks}).
Since $P$ is compact,  the non-autonomous set $\{A_p\}_{p\in P}$, formed by the $p$-sections of $\A$ defined by $A_p=\{x\in X\mid (p,x)\in \A\}$ for each $p\in P$, is a {\em pullback attractor\/}, that is,  $\{A_p\}_{p\in P}$ is compact, invariant, and it pullback attracts all the sets $X_1 \in \mathcal{D}(X)$:
\begin{equation*}
\lim_{t\to\infty} {\rm dist}\big(u(t,p{\cdot}(-t),X_1),A_p\big)=0\quad \text{for all}\; p\in P\,.
\end{equation*}
The implications for each fixed $p\in P$ are expressed in terms of {\em pullback attraction for the related evolution process on $X$\/}, defined by $S_{p}(t,s)\,x=u(t-s,p{\cdot}s,x)$ for each $x\in X$ and $t\geq s$. Precisely,
for each fixed $p\in P$, the family of compact sets $\{A_{p{\cdot}t}\}_{t\in  \R}$ is the pullback attractor for the process  $S_{p}(\cdot,\cdot)$,
 meaning that:
\begin{itemize}
\item[(i)] it is invariant, i.e.,  $S_{p}(t,s)\,A_{p{\cdot}s}=A_{p{\cdot}t}$ for all $t\geq s\,$;
\item[(ii)] it pullback attracts the class of sets $\mathcal{D}(X)$, i.e., for each  $X_1\in \mathcal{D}(X)$,
\[
\lim_{s\to -\infty}{\rm dist}\big(S_{p}(t,s)\,X_1,A_{p{\cdot}t}\big)=0 \quad \hbox{for all}\; t\in \R\,;
\]
\item[(iii)] it is the minimal family of closed subsets of $X$ with property~(ii).
\end{itemize}
However, the forward and pullback dynamics are in general unrelated, so that it is always an interesting  problem to know whether the pullback attractor is also a {\em forward attractor\/} for the process. That is to know if, for each $p\in P$,
\begin{equation*}
\lim_{t\to \infty}{\rm dist}\big(u(t,p,X_1),A_{p{\cdot}t}\big)=0   \quad \hbox{for each}\; X_1\in \mathcal{D}(X)\,.
\end{equation*}

Finally, let us fix some notation and recall some definitions. The norm of a vector $v\in\R^N$ is denoted by $|v|$. We say that $\{v_k\}_{k\geq 1}$ converges to a set $C\subset \R^N$ if ${\rm dist}(v_k,C)\to 0$ as $k\to\infty$. If the set $C$ is compact, it is easy to check that $\{v_k\}_{k\geq 1}$ converges to $C$ if and only if for every convergent subsequence $\{v_{k_j}\}_{j\geq 1}$, $\lim_{j\to\infty} v_{k_j}\in C$. A solution to a differential inclusion $\dot{x}\in F(x)\subset\R^N$ on an interval $[0,t_0]$ is an absolutely continuous function $x:[0,t_0]\to\R^N$  such that $x(t)\in F(x(t))$ for almost every $t$ in $[0,t_0]$. Given a set $\mathcal{F}\subset C([0,t_0],\R^N)$, we say that a family of continuous maps $\{x_\ep(t)\mid 0<\ep\leq \ep_0\}$ converge to $\mathcal{F}$ if ${\rm dist}(x_\ep,\mathcal{F})=\inf_{x\in \mathcal{F}}\|x_\ep-x\|_\infty\to 0$ as $\ep\to 0$, where $\|\cdot\|_\infty$ denotes the sup-norm on $[0,t_0]$. As before, if $\mathcal{F}$ is compact, the limit points of converging sequences $\{x_{\ep_k}\}_{k\geq 1}$ are in $\mathcal{F}$.
\section{Slow-fast systems with nonautonomous fast dynamics: terminology, conditions and some basic results}\label{secSlowfast}
We consider a coupled pair of ordinary differential equations, the second of which is singularly perturbed and shows an explicit dependence on time:
\begin{equation}\label{eq:slowfastent}
\left\{\begin{array}{l} \dot{x}  = f(x,y)\,,
  \\[0.2cm]
  \ep\,\dot{y}  = g\left(x,y,\des\frac{t}{\ep} \right),
\end{array}\right.
\end{equation}
where $\ep>0$ is a small parameter. With this structure, there are two different timescales which are well-separated, so that the system is also called a slow-fast system of ODEs.
The components of $x\in \R^n$ are called slow variables and the components of $y\in \R^m$ are called fast variables. Here, $\dot{x}$ and $\dot{y}$ denote the derivatives with respect to the time $t$. The timescales are denoted by $t$ the slow time and by $\tau$ the fast time, both timescales being related by means of $\ep$ through the formula $\tau=t/\ep$. When initial conditions are prescribed,
\begin{equation}\label{eq:initial values}
\left\{\begin{array}{l} x(0)=x_0\,,
  \\
  y(0)=y_0\,,
\end{array}\right.
\end{equation}
the interest is in the study of the limit behaviour as $\ep\to 0$ of the solutions  $(x_\ep(t),y_\ep(t))$ of the initial value problems~\eqref{eq:slowfastent}-\eqref{eq:initial values} for values of $t$ on a compact interval, say $[0,t_0]$.

Equivalently, we can write the system in the fast scale of time $\tau$, denoting by $x'$ and $y'$ the derivatives with respect to $\tau$:
\begin{equation}\label{eq:slowfastentau}
\left\{\begin{array}{l} x'  = \ep\,f(x,y)\,,
  \\
 y' = g(x,y,\tau )\,,
\end{array}\right.
\end{equation}
where the time dependence on the fast time scale is evident in the fast variables. Now, the interest is in the limit behaviour as $\ep\to 0$ of the solutions  $(x_\ep(\tau),y_\ep(\tau))$ for $\tau$ on growing intervals of the form $[0,t_0/\ep]$, and specifically in the behaviour of $y_\ep(t_0/\ep)$ as $\ep\to 0$. It will be useful to see the fast dynamics as a perturbation of the parameter-dependent ODEs $y'= g(\lambda,y,\tau )$, in which the parameter $\lambda$ slowly varies in time.

These are the initial basic assumptions we make on the maps $f$ and $g$ in~\eqref{eq:slowfastentau} so as to have existence of solutions for the slow-fast pair, global existence for the slow variables,   and uniqueness of solutions and continuous dependence with respect to $x$ for the fast dynamics.
\begin{assu}\label{asu:1}
\begin{itemize}
  \item[]
  \item[(i)] $f(x,y)$ is continuous and
  \begin{equation*}
    \sup_{y\in \R^m} |f(x,y)|\leq a + b\,|x|
  \end{equation*}
  for all $x\in\R^n$, for some  $a$ and $b$ positive.
  \item[(ii)] $g(x,y,\tau)$ is an {\em admissible\/} map, that is, for every
compact set $K\subset \R^n\times\R^m$, $g$ is bounded and uniformly continuous
on $K\times \R$.
   \item[(iii)] Given a compact set $K_0\subset \R^n$, $g$ is uniformly  Lipschitz in $y$ on compact subsets $K\subset \R^m$, that is, given a compact set $K\subset \R^m$ there is an $L=L(K_0,K)$ such that
      \[
      |g(x,y_1,\tau)-g(x,y_2,\tau)|\leq L\,|y_1-y_2|
      \]
      for every $y_1,y_2\in K$, $x\in K_0$ and $\tau\in \R$.
\end{itemize}
\end{assu}
The growth condition in (i) is taken from Assumption~5.1~(ii) in~\cite{paper:ZA99} and it is a technical condition frequently found in the theory of multivalued differential equations (see Deimling~\cite{book:Dei}). Note that it implies, with an application of Gronwall's inequality,  that for $\ep>0$, subject to the existence of the solutions,  the components $x_\ep(t)$ of the solution $(x_\ep(t),y_\ep(t))$ of  the  problem~\eqref{eq:slowfastent}-\eqref{eq:initial values} satisfy $|x_\ep(t)|\leq e^{bt}|x_0|+ a(e^{bt}-1)/b$ for every $t\geq 0$, and in particular they are uniformly bounded on compact intervals $[0,t_0]$. In fact, one can usually disregard this growth condition. To this respect, see Remark~\ref{rmk:growth bound}~(1).

Condition (ii) is imposed in order to build a compact base flow out of the vector field $g$ by means of the so-called {\em hull\/} construction. Let us explain this in detail. Provided that $g$ is admissible, its {\em hull\/} $\mathcal{H}$, which is  the closure for the compact-open topology of the set of $s$-translates of $g$, $\{ g{\cdot}s \mid s\in\R\}$ with $(g{\cdot}s)(x,y,\tau)=g(x,y,\tau+s)$
for $\tau\in \R$ and $(x,y)\in\R^n\times\R^m$, is a compact metric space. Let us denote by $d$ the metric on $\mathcal{H}$.
 In this situation, the translation or shift map $\R\times \mathcal{H}\to \mathcal{H}$, $(\tau,h)\mapsto h{\cdot}\tau$ defines a
continuous flow on the compact space $\mathcal{H}$.  The shift flow is  minimal  provided that  the map $g$ has certain recurrent behaviour in time, such as periodicity, almost periodicity, or other weaker properties of recurrence. Since the time variation will be almost periodic in some forthcoming examples, we recall that the map $g(x,y,\tau)$ is said to be {\it almost periodic, uniformly on compact sets\/}, if it is admissible and almost periodic in $\tau$ for each fixed $(x,y)$. If so, then the flow on the hull is minimal and almost periodic, and thus uniquely ergodic.

Let us introduce some notation for the solutions of the nonautonomous ODEs for the fast motion, also called {\it layer problems} or  {\it layer equations},
\begin{equation*}
  y' = g(x,y,\tau)\,,
\end{equation*}
where $x$ is seen as a parameter. For a fixed $x\in\R^n$, the solution  such that $y(\tau_0)=y_0$ is denoted by $y(\tau,g,x,\tau_0,y_0)$, for $\tau$ in the interval of existence.   Analogously, given $h\in\mathcal{H}$,   $y(\tau,h,x,\tau_0,y_0)$ denotes the solution of $y' = h(x,y,\tau)$ for the value of the parameter $x$ and with initial condition $y(\tau_0)=y_0$. When $\tau_0=0$, we remove it from the notation, so that $y(\tau,h,x,y_0)$ is the solution of $y' = h(x,y,\tau)$ with initial condition $y(0)=y_0$.

Note that, because of the time dependence in the equations, when a solution is translated, it is a solution of a translated equation. In  particular, using the language of processes (see Section~\ref{secpreli}) for each $h\in\mathcal{H}$, $x\in\R^n$, $y_0\in\R^m$, for $\tau\geq \tau_0$, as far as the solutions are defined,
\begin{equation}\label{eq:translated}
 S_h^x(\tau,\tau_0)\,y_0=y(\tau-\tau_0,h{\cdot}\tau_0,x,y_0) = y(\tau,h,x,\tau_0,y_0)\,.
\end{equation}

Now we build a skew-product semiflow, in principle only locally defined, induced by the solutions of the systems over $\mathcal{H}$, the hull  of $g$. Consider a compact set of parameters $K_0\subset \R^n$, and on the compact product space $\mathcal{H} \times K_0$ define the continuous flow given by
the map
\begin{equation*}
 \begin{array}{ccl}
 \theta:\R\times\mathcal{H} \times K_0& \longrightarrow & \mathcal{H} \times K_0 \\
 (\tau,h,x) & \mapsto &(h{\cdot}\tau,x)\,.
\end{array}
\end{equation*}
Then, under Assumption~\ref{asu:1} a continuous skew-product semiflow is defined over the former base flow in the following way, on an appropriate open set $\mathcal{U}$ subject to the existence of the solutions:
\begin{equation}\label{eq:sk-pr}
 \begin{array}{ccl}
 \pi:\mathcal{U}\subseteq\R_+\times\mathcal{H} \times K_0 \times \R^m & \longrightarrow & \mathcal{H} \times K_0 \times \R^m \\
 (\tau,h,x,y_0) & \mapsto &(h{\cdot}\tau,x,y(\tau,h,x,y_0))\,.
\end{array}
\end{equation}
Due to its structure, we can call $\pi$ the {\em layered semiflow\/}.
Note that the admissibility condition (ii) is used for the joint continuity and the fiber components satisfy the so-called {\it cocycle property}, subject to existence:
\begin{equation}\label{eq:cocycle}
  y(\tau_1+ \tau_2,h,x,y_0)=y(\tau_1,h{\cdot}{\tau_2},x,y(\tau_2,h,x,y_0))\,,\quad \tau_1,\tau_2\geq 0\,.
\end{equation}

Note also that for each fixed $x\in \R^n$  a continuous skew-product semiflow is defined over the base flow on the hull $\mathcal{H}$ in the following way:
\begin{equation}\label{eq:sk-pr x}
 \begin{array}{rccl}
 \pi^x: &\mathcal{U}_x\subseteq \R_+\times\mathcal{H}  \times \R^m & \longrightarrow & \mathcal{H}  \times \R^m \\
 &(\tau,h,y_0) & \mapsto &(h{\cdot}\tau,y(\tau,h,x,y_0))\,,
\end{array}
\end{equation}
on an appropriate open set $\mathcal{U}_x$ subject to the existence of the solutions.

Next, we specify the conditions required on the fast equations in Section~\ref{secTracking}, and we explore some initial consequences. These conditions will be later relaxed in  Section~\ref{secOmegalim} to obtain a more general version of the results.

 The main assumption is the uniform ultimate boundedness of the solutions of the parametric families $y'=g(x,y,\tau)$ for $x$ in compact sets $K_0\subset \R^n$. The reader can see, e.g., Definition~5.1 in Longo et al.~\cite{paper:LNO2}, given in terms of processes.
\begin{defn}\label{def:ultimately}
Let $K_0\subset \R^n$ and assume that for each $x\in K_0$, $\tau_0\in \R$ and $y_0\in \R^m$  the solution  $y(\tau,g,x,\tau_0,y_0)$ is defined on $[\tau_0,\infty)$.  The solutions of the parametric family $y'=g(x,y,\tau)$ for $x\in K_0$ are said to be {\em uniformly ultimately bounded\/} if there is a constant $c>0$ such that for every $d>0$ there is a time $T=T(d)$ such that
\begin{equation*}
  |S_g^x(\tau+\tau_0,\tau_0)\,y_0|=|y(\tau+\tau_0,g,x,\tau_0,y_0)|\leq c
\end{equation*}
for every $x\in K_0$, whenever  $\tau_0\in \R$, $\tau\geq T$ and $|y_0|\leq d$.
\end{defn}

\begin{assu}\label{asu:2}
\begin{itemize}
  \item[]
  \item[(i)] For each $h\in\mathcal{H}$, $x\in \R^n$ and $y_0\in \R^m$, the solution $y(\tau,h,x,y_0)$ is defined for all $\tau\geq 0$.
  \item[(ii)] For each compact set $K_0$ of $\R^n$, the solutions of the parametric family $y'=g(x,y,\tau)$ for $x\in K_0$ are uniformly ultimately bounded.
\end{itemize}
\end{assu}
Note that, by relation~\eqref{eq:translated}, condition (i) implies that the solution $y(\tau,h,x,\tau_0,y_0)$ is defined for all $\tau\geq \tau_0$. In particular,  we are assuming that all the solutions of the layer equations on the hull are globally defined in the future, so that whenever we take a compact set $K_0\subset \R^n$, the skew-product semiflow~\eqref{eq:sk-pr} is defined on  $\R_+\times\mathcal{H} \times K_0 \times \R^m$ as well as all the skew-product semiflows~\eqref{eq:sk-pr x} for $x\in \R^n$ are defined on  $\R_+\times\mathcal{H} \times \R^m$.

The first result is on the existence of a global attractor for the induced skew-product semiflows.
\begin{prop}\label{prop:existe atractor}
Under Assumptions~{\rm\ref{asu:1}~(ii)-(iii)}  and~{\rm\ref{asu:2}},  given a compact set $K_0\subset \R^n$, the following statements hold:
\begin{itemize}
  \item[\rm{(i)}] There is a global attractor $\A\subset \mathcal{H} \times K_0 \times \R^m$ for the globally defined skew-product semiflow $\pi$ given in~\eqref{eq:sk-pr} which can be written as
\[
\A=\bigcup_{(h,x)\in\mathcal{H}\times K_0} \{(h,x)\}\times A_{(h,x)}
\]
for the sections $A_{(h,x)}=\{ y\in \R^m\mid (h,x,y)\in \A \}$, and the nonautonomous set $\{A_{(h,x)}\}_{(h,x)\in \mathcal{H}\times K_0}$ is the pullback attractor of the skew-product semiflow.
\item[\rm{(ii)}] For each fixed $x\in K_0$, the skew-product semiflow $\pi^x$ given in~\eqref{eq:sk-pr x} has a global attractor $\A^x\subset \mathcal{H}  \times \R^m$ which can be written as
\[
\A^x=\bigcup_{h\in\mathcal{H}} \{h\}\times A^x_h
\]
for the sections $A^x_h=\{ y\in \R^m\mid (h,y)\in \A^x \}$, and the nonautonomous set $\{A^x_h\}_{h\in \mathcal{H}}$ is the pullback attractor of the skew-product semiflow $\pi^x$.
\end{itemize}
   \end{prop}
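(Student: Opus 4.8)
The plan is to exhibit a compact set in $\mathcal{H}\times K_0\times\R^m$ that absorbs, forwards in time, every set of the form $\mathcal{H}\times K_0\times D$ with $D\subset\R^m$ bounded, and then to invoke the standard existence theory for global attractors of skew-product semiflows over a compact base, together with the general facts on pullback attractors recalled in Section~\ref{secpreli}. The one genuinely nonautonomous ingredient is a preliminary step: to upgrade the uniform ultimate boundedness of Definition~\ref{def:ultimately}, which is stated for the vector field $g$, to a bound that is valid \emph{simultaneously for all} $h\in\mathcal{H}$. Let $c>0$ and $T=T(d)$ be as in Assumption~\ref{asu:2}~(ii) for the compact set $K_0$. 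For a translate $g{\cdot}s$, relation~\eqref{eq:translated} gives $y(\tau,g{\cdot}s,x,y_0)=y(\tau+s,g,x,s,y_0)$, so that $|y(\tau,g{\cdot}s,x,y_0)|\le c$ for all $s\in\R$, $x\in K_0$, $|y_0|\le d$ and $\tau\ge T(d)$; here it is crucial that $c$ and $T(d)$ do not depend on $s$ nor on $x\in K_0$, which is exactly the content of Definition~\ref{def:ultimately} (the quantification over all initial times $\tau_0\in\R$ becomes uniformity over the hull). Since $\{g{\cdot}s\mid s\in\R\}$ is dense in $\mathcal{H}$ and $\pi$ is continuous on all of $\R_+\times\mathcal{H}\times K_0\times\R^m$ (by Assumptions~\ref{asu:1}~(ii)-(iii) and~\ref{asu:2}~(i)), passing to the limit yields
\[
|y(\tau,h,x,y_0)|\le c \qquad\text{for all } h\in\mathcal{H},\ x\in K_0,\ |y_0|\le d,\ \tau\ge T(d)\,.
\]

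With this in hand, set $\mathcal{B}:=\mathcal{H}\times K_0\times\overline{B}_{\R^m}(0,c)$, which is a \emph{compact} subset of $\mathcal{H}\times K_0\times\R^m$ because $\R^m$ is finite dimensional. The displayed bound says precisely that $\pi_\tau\big(\mathcal{H}\times K_0\times D\big)\subseteq\mathcal{B}$ for every bounded $D\subset\R^m$ and every $\tau\ge T(d)$ with $d=\sup_{y\in D}|y|$; that is, $\mathcal{B}$ is a compact set that absorbs the class $\mathcal{D}_b(\R^m)$ forwards in time. Since $\mathcal{B}$ is compact, the semiflow is asymptotically compact on this universe, and the usual existence theorem (see~\cite{klra} or~\cite{book:CLR}) provides the global attractor
\[
\A=\omega(\mathcal{B})=\bigcap_{s\ge0}\overline{\bigcup_{\tau\ge s}\pi_\tau(\mathcal{B})}\subseteq\mathcal{H}\times K_0\times\R^m\,,
\]
which is nonempty, compact, $\pi$-invariant and attracts every $\mathcal{H}\times K_0\times D$ with $D$ bounded. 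Because the base $\mathcal{H}\times K_0$ is compact, the general facts recalled in Section~\ref{secpreli} apply verbatim: the sections $A_{(h,x)}=\{y\in\R^m\mid(h,x,y)\in\A\}$ are nonempty and compact, $\A=\bigcup_{(h,x)\in\mathcal{H}\times K_0}\{(h,x)\}\times A_{(h,x)}$, and $\{A_{(h,x)}\}_{(h,x)\in\mathcal{H}\times K_0}$ is the pullback attractor of $\pi$. This proves part~(i).

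Part~(ii) is the special case of~(i) in which the compact set of parameters is a singleton. Indeed, for fixed $x$ the slice $\mathcal{H}\times\{x\}\times\R^m$ is $\pi$-invariant and $\pi^x$ in~\eqref{eq:sk-pr x} is exactly the layered semiflow~\eqref{eq:sk-pr} associated with $K_0:=\{x\}$ (after the trivial identification $\mathcal{H}\times\{x\}\cong\mathcal{H}$). Since Assumption~\ref{asu:2} is imposed for every compact subset of $\R^n$, in particular for $\{x\}$, part~(i) applied with this choice yields the global attractor $\A^x\subset\mathcal{H}\times\R^m$ and the stated fiber description of $\{A^x_h\}_{h\in\mathcal{H}}$.

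The step I expect to be the main obstacle is the first one — transferring the uniform ultimate boundedness from $g$ to the whole hull $\mathcal{H}$ — and more precisely making sure that the constant $c$ and the absorption time $T(d)$ are genuinely uniform over $h\in\mathcal{H}$. This is what forces the use of the identity~\eqref{eq:translated}, of the density of the translates of $g$ in $\mathcal{H}$, and of the joint continuity of $\pi$, and it is also why Definition~\ref{def:ultimately} is formulated with a quantifier over all initial times $\tau_0\in\R$. Once the hull-uniform bound is secured, everything else is routine: the absorbing set is automatically compact (finite-dimensional fast variable), the global attractor exists by the standard dissipativity criterion, and its fiber and pullback structure come for free from the general nonautonomous theory of Section~\ref{secpreli}.
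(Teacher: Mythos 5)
Your proposal is correct and follows essentially the same route as the paper: transfer the uniform ultimate boundedness from $g$ to all of $\mathcal{H}$ via the translation identity~\eqref{eq:translated}, the density of $\{g{\cdot}s\}$ in the hull, and the continuity of $\pi$, then observe that $\mathcal{H}\times K_0\times\overline{B}(0,c)$ is a compact absorbing set and invoke the standard existence theorem together with the fiber/pullback structure from Section~\ref{secpreli}. The only cosmetic difference is that you obtain part~(ii) by applying part~(i) to the singleton $K_0=\{x\}$, whereas the paper notes directly that the same ball $\overline{B}(0,c)$ furnishes a compact absorbing set for each $\pi^x$; both observations are immediate and interchangeable.
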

\begin{proof}
By Assumption~{\rm\ref{asu:2}} (ii) we can take $c$ the positive constant in Definition~\ref{def:ultimately}  and consider the closed ball $B$ in $\R^m$ centered at the origin with radius $c$. Then, it follows that, given any bounded set $D\subset \R^m$, there is a time $T=T(D)$ such that
\[
y(\tau+\tau_0,g,x,\tau_0,D) =y(\tau,g{\cdot}\tau_0,x,D)  \subset B
\]
for every $x\in K_0$, $\tau_0\in\R$ and $\tau\geq T$. Since each $h\in \mathcal{H}$ is the limit of a sequence $\{g{\cdot}\tau_k\}_{k\geq 1}$  as $k\to \infty$, for some  $\{\tau_k\}_{k\geq 1}\subset \R$, by the continuity  of the skew-product semiflow we can deduce that also
\[
y(\tau,h,x,D)  \subset B
\]
for every $x\in K_0$ and $\tau\geq T$. In other words, $y(\tau,\mathcal{H},K_0,D) \subset B$ for every $\tau\geq T$ and therefore, $\mathcal{H}\times K_0\times B$ is an absorbing compact set for the skew-product semiflow~\eqref{eq:sk-pr}, as well as $\mathcal{H}\times  B$ is an absorbing compact set for the semiflow $\pi^x$ for each fixed $x\in K_0$. It is well-known that then, each of these semiflows has a global attractor (see, e.g.,~\cite[Theorem~1.36]{klra}).

To finish, it is well-known (see, e.g.,~\cite[Chapter 3]{klra} or~\cite[Chapter~16]{book:CLR}) that, with a compact base, as it is the case,  if the global attractor exists,  it can be written as expressed in (i) and (ii), respectively, and the fiber sections form the pullback attractor in each case.
The proof is finished.
\end{proof}
Note that, by construction, $(h,x,y)\in \A \Leftrightarrow (h,y)\in \A^x$ for each $(h,x,y)\in \mathcal{H}\times K_0\times \R^m$. Equivalently,  for the sections, $A_{(h,x)}=A_h^x$ for $(h,x)\in \mathcal{H}\times K_0$.

The condition of uniform ultimate boundedness of the solutions of the whole family of equations over $K_0$ requires this property for each equation, but also that the family of attractors $\{\A^x\}_{x\in K_0}$ be bounded.
\begin{prop}\label{prop:unifom ulti bound on K}
Let $K_0\subset \R^n$ be compact. Under Assumptions~{\rm\ref{asu:1}~(ii)-(iii)} and~{\rm \ref{asu:2} (i)}, suppose that for each fixed $x\in K_0$ the solutions of $y'=g(x,y,\tau)$ are uniformly ultimately bounded, with global attractor $\A^x$ for the associated skew-product semiflow $\pi^x$. If there exists an  $r>0$ such that
\[
\bigcup_{x\in K_0} \A^x \subset \mathcal{H}\times \overline B(0,r)\subset \mathcal{H}\times \R^m\,,
\]
then the solutions of the parametric family of equations $y'=g(x,y,\tau)$ for $x\in K_0$ are uniformly ultimately bounded.
\end{prop}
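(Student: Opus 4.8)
The plan is to show that the constant $c:=r+1$ works as the uniform ultimate bound in Definition~\ref{def:ultimately}. Under Assumptions~\ref{asu:1}~(ii)--(iii) and~\ref{asu:2}~(i), the layered semiflow $\pi$ in~\eqref{eq:sk-pr} is globally defined and jointly continuous on $\mathcal{H}\times K_0\times\R^m$, and so is each $\pi^x$ in~\eqref{eq:sk-pr x} on $\mathcal{H}\times\R^m$; in particular all the solutions $y(\tau,g,x,\tau_0,y_0)$ are defined on $[\tau_0,\infty)$, as required for the statement to make sense. By the translation identity~\eqref{eq:translated}, $y(\tau+\tau_0,g,x,\tau_0,y_0)=y(\tau,g{\cdot}\tau_0,x,y_0)$ with $g{\cdot}\tau_0\in\mathcal{H}$, so it suffices to prove: \emph{for every $d>0$ there is $T(d)>0$ such that $|y(\tau,h,x,y_0)|\le r+1$ for all $\tau\ge T(d)$, $h\in\mathcal{H}$, $x\in K_0$ and $|y_0|\le d$}. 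This reduction trades the uniformity in the initial time $\tau_0$ for uniformity in $h\in\mathcal{H}$, where the compactness of the hull comes in.

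\emph{Step 1 (local uniform boundedness of orbits in the parameter).} Fix $d>0$ and put $d_0:=\max\{d,\,r+1\}$. Given $x_*\in K_0$, the attractor $\A^{x_*}\subset\mathcal{H}\times\overline B(0,r)$ attracts the compact set $\mathcal{H}\times\overline B(0,d_0)$ under $\pi^{x_*}$, so there is $T_1=T_1(x_*,d)$ with $\pi^{x_*}_\tau\big(\mathcal{H}\times\overline B(0,d_0)\big)\subset\mathcal{H}\times\overline B(0,r+1)$ for all $\tau\ge T_1$; moreover, by continuity of $\pi^{x_*}$ and compactness of $[0,T_1]\times\mathcal{H}\times\overline B(0,d_0)$, the $\R^m$-component of $\pi^{x_*}_\tau\big(\mathcal{H}\times\overline B(0,d_0)\big)$ stays in a ball $\overline B(0,\rho_*)$ for $\tau\in[0,T_1]$, with $\rho_*\ge d_0$. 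A tube-lemma argument, using the joint continuity of $\pi$ on the compact set $[0,T_1]\times\mathcal{H}\times\{x_*\}\times\overline B(0,d_0)$, then provides a neighbourhood $V=V(x_*,d)$ of $x_*$ in $K_0$ such that, for every $x\in V$, $\pi^{x}_{T_1}\big(\mathcal{H}\times\overline B(0,d_0)\big)\subset\mathcal{H}\times\overline B(0,r+1)$ and $\pi^{x}_\tau\big(\mathcal{H}\times\overline B(0,d_0)\big)\subset\mathcal{H}\times\overline B(0,\rho_*+1)$ for $\tau\in[0,T_1]$. Since $\overline B(0,r+1)\subseteq\overline B(0,d_0)$, the cocycle property~\eqref{eq:cocycle} lets one iterate over the successive windows $[kT_1,(k+1)T_1]$, $k\ge0$: at the end of each window the fast component has been pushed back into $\overline B(0,d_0)$, so the same two inclusions apply to the next window, and one concludes $|y(\tau,h,x,y_0)|\le\rho_*+1$ for all $\tau\ge0$, $h\in\mathcal{H}$, $x\in V$ and $|y_0|\le d_0$. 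Covering the compact set $K_0$ by finitely many such neighbourhoods yields a constant $M(d)>0$ with $|y(\tau,h,x,y_0)|\le M(d)$ for all $\tau\ge0$, $h\in\mathcal{H}$, $x\in K_0$ and $|y_0|\le d$.

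\emph{Step 2 ($\omega$-limit set of the layered semiflow).} By Step 1, the forward orbit under $\pi$ of the compact set $\mathcal{D}_d:=\mathcal{H}\times K_0\times\overline B(0,d)$ lies in the compact set $\mathcal{H}\times K_0\times\overline B(0,M(d))$, hence is precompact; therefore the $\omega$-limit set $\omega(\mathcal{D}_d)$ is nonempty, compact, $\pi$-invariant and attracts $\mathcal{D}_d$ (see~\cite{book:CLR,klra}). The key observation is that $\pi_\tau$ does not move the $x$-coordinate, so every complete $\pi$-orbit has constant $x$ and is a complete orbit of the corresponding semiflow $\pi^x$. By $\pi$-invariance of $\omega(\mathcal{D}_d)$, each of its points lies on a bounded complete $\pi$-orbit, hence on a bounded complete $\pi^x$-orbit, hence in $\A^{x}\subset\mathcal{H}\times\overline B(0,r)$; consequently the $\R^m$-component of $\omega(\mathcal{D}_d)$ is contained in $\overline B(0,r)$. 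Since $\omega(\mathcal{D}_d)$ attracts $\mathcal{D}_d$, there is $T(d)>0$ with ${\rm dist}\big(\pi_\tau(\mathcal{D}_d),\omega(\mathcal{D}_d)\big)\le1$ for all $\tau\ge T(d)$, and therefore $|y(\tau,h,x,y_0)|\le r+1$ for all $\tau\ge T(d)$, $h\in\mathcal{H}$, $x\in K_0$ and $|y_0|\le d$. This is the reduced statement, so $c=r+1$ works.

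\emph{Main obstacle.} The delicate part is Step 1. The hypotheses give, for each fixed $x\in K_0$ separately, a global attractor $\A^x$ and the uniform ultimate boundedness of the associated family, but with absorption times and transient bounds that are a priori $x$-dependent; and the only tool for passing information to nearby parameters — joint continuity of $\pi$ — controls trajectories only over compact time intervals. What rescues the argument is precisely the hypothesis that the attractors are bounded by $r$ \emph{uniformly} in $x$: this forces the fast component back into the fixed ball $\overline B(0,d_0)$ at the end of every time window, enabling the window-by-window bootstrap, and it is why one must run the argument with the dynamically stable radius $d_0=\max\{d,r+1\}$ rather than with $d$ itself. I would also double-check against~\cite{book:CLR,klra} the two standard facts invoked in Step 2: that a set with precompact forward orbit is attracted by its $\omega$-limit set, and that the global attractor of a semiflow over a compact base equals the union of its bounded complete orbits.
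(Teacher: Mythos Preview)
Your proof is correct. The two arguments share the same core ingredients---the cocycle window-iteration to push bounds forward in time, and the characterization of each $\A^x$ as the set of points on bounded complete $\pi^x$-orbits---but they are organized differently. The paper argues by contradiction: it assumes the bound $r+1$ fails, extracts a sequence $(x_k,\tau_k,\sigma_k,y_k)$ witnessing failure, passes to $x_k\to x_0$, uses the ultimate boundedness at the single point $x_0$ together with continuity and the cocycle identity to trap the tail of the sequence in a fixed compact $K$, and then shows the limit $(h_0,z_0)$ admits a bounded entire $\pi^{x_0}$-orbit, forcing $|z_0|\le r$. Your route is direct: Step~1 does the same continuity-plus-iteration trick but \emph{locally near each} $x_*\in K_0$ and then invokes a finite cover, while Step~2 packages the entire-orbit argument as the statement $\omega(\mathcal D_d)\subset\bigcup_x\{x\}\times\A^x$ for the layered $\omega$-limit set. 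The paper's version is shorter and avoids the finite cover; your version is more constructive and makes the two-stage logic (first bounded, then absorbed) explicit. One cosmetic point in Step~1: to get the continuity perturbation to land in $\overline B(0,r+1)$ you should choose $T_1$ so that the $y$-component at $x_*$ is \emph{strictly} inside, e.g.\ in $\overline B(0,r+\tfrac12)$; this is automatic from attraction to $\A^{x_*}\subset\mathcal H\times\overline B(0,r)$ but worth saying.
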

\begin{proof}
First of all, note that the application of Proposition~\ref{prop:existe atractor} to the compact set $\{x\}$ permits to build the global attractor $\A^x$ for $\pi^x$,  for every fixed $x\in K_0$. Arguing by contradiction, assume that for the positive value $r+1$, there is a $d>0$ such that, for every time $T_k=k\geq 1$ there exist $\tau_k\in\R$, $\sigma_k\geq k$, $x_k\in K_0$ and $y_k\in \R^m$ with $|y_k|\leq d$ such that
\begin{equation}\label{eq:r+1}
  |y(\sigma_k+\tau_k,g,x_k,\tau_k,y_k)| = |y(\sigma_k,g{\cdot}\tau_k,x_k,y_k)|> r+1\quad\hbox{for all}\;\,k\geq 1\,.
\end{equation}
Note that, by compactness, we can assume without loss of generality that  $x_k\to x_0 \in K_0$.  Let $c_0=c_0(x_0)>0$ be the constant given in Definition \ref{def:ultimately} and note that we can assume that $d\geq c_0+1$. Associated to $d$,  let $T=T(d,x_0)>0$ be such that
\[
|y(\tau+\tau_0,g,x_0,\tau_0,y_0)|= |y(\tau,g{\cdot}\tau_0,x_0,y_0)|\leq c_0 \;\,\hbox{for all}\; \tau_0\in\R\,,\;\tau\geq T\;\hbox{and}\;\,|y_0|\leq d\,.
\]
Arguing as in the proof of Proposition \ref{prop:existe atractor}, for $D=\overline B(0,d)\subset\R^m$ we deduce that
\[
\big|y(\tau,h,x_0,D)\big|\leq c_0 \quad\hbox{for all}\;\, \tau\geq T\;\hbox{and}\;\,h\in \mathcal{H}\,.
\]
By an argument of continuity, since $x_k\to x_0$, there exists a $k_0$ such that
\[
\big|y(T,h,x_k,D)\big|\leq c_0+1 \leq d \quad\hbox{for all}\;\, h\in \mathcal{H} \;\hbox{and}\;\, k\geq k_0\,,
\]
and this permits to prove by induction, using the cocycle property \eqref{eq:cocycle} that
\begin{equation}\label{eq:B}
  \big|y(jT,h,x_k,D)\big|\leq c_0+1 \leq d \quad\hbox{for all}\;\, h\in \mathcal{H}\,,\; j\geq 1 \;\hbox{and}\;\, k\geq k_0\,.
\end{equation}
Now consider the compact set $K := y([0,T],\mathcal{H},K_0,D)\subset \R^m$ and let us see that
$y(\tau,g{\cdot}\tau_k,x_k,y_k)\in K$ for all $\tau\geq 0$ and $k\geq k_0$. If $\tau=j T +s$ for some $j\in \N$ and $s\in [0,T)$, once more by the cocycle identity, and thanks to \eqref{eq:B},
\[
y(\tau,g{\cdot}\tau_k,x_k,y_k)=y(s,g{\cdot}(\tau_k+jT),x_k,y(j T,g{\cdot}\tau_k,x_k,y_k) )\in K\;\,\, \hbox{for}\;\,\tau\geq 0\,\;k\geq k_0\,.
\]
Then, we can assume without loss of generality that $y(\sigma_k,g{\cdot}\tau_k,x_k,y_k)\to z_0$ and $g{\cdot}(\tau_k+\sigma_k)\to h_0$ as $k\to\infty$.  Let us check that $(h_0,z_0)$ admits an entire bounded orbit for $\pi^{x_0}$. For each $s\geq 0$, by the cocycle relation and continuity,  as $k\to \infty$,
\[
y(s+\sigma_k,g{\cdot}\tau_k,x_k,y_k) = y(s,g{\cdot}(\tau_k+\sigma_k),x_k,y(\sigma_k,g{\cdot}\tau_k,x_k,y_k))\to y(s,h_0,x_0,z_0)\in K.
\]
Consider $I:=\{\tau<0\mid y(\cdot,h_0,x_0,z_0)\;\hbox{is defined on}\;[\tau,0]\}$. Note that $I\neq \emptyset$ and that, given $\tau\in I$, the previous identity makes sense for $k$ big enough and $s\in [\tau,0]$. A standard argument of prolongation of solutions implies that $I=(-\infty,0)$ and hence $y(s,h_0,x_0,z_0)\in K$ for all $s\in \R$. As a consequence $(h_0,z_0)\in\A^{x_0}$ and $\A^{x_0}\subset \mathcal{H}\times \overline B(0,r)$ by hypothesis. Thus, $|z_0|\leq r$, which contradicts~\eqref{eq:r+1}. Therefore, the solutions of the parametric family for $x\in K_0$ are uniformly ultimately bounded, as we wanted to prove.
\end{proof}

\begin{rmks}\label{rmk:atractor uniforme}
(1)~It is important to note that, under Assumptions~\ref{asu:1} and~\ref{asu:2}, we can apply Theorem~5.3 in Artstein~\cite{paper:ZA99}. First of all, we are working in a more regular context for $g$ than the Carath\'{e}odory one in~\cite[Assumption~2.1]{paper:ZA99}. With regard to~\cite[Assumption~5.1]{paper:ZA99}, we can take the open set $G(x):=\R^m$ and the compact set $K(x):=\bigcup_{h\in\mathcal{H}} A_h^x\subset \R^m$ for each $x\in\R^n$ and then, arguing as in the proof of the previous proposition, it is easy to check that if $x_k\to x_0$ in $\R^n$,  $y_k\to y_0$ in $\R^m$, $\tau_k\geq 0$ and $\sigma_k\to\infty$, then $y(\sigma_k+\tau_k,g,x_k,\tau_k,y_k)$ converges to the compact set $K(x_0)$. Besides, it is easy to check that the restriction of the set-valued map $K(\cdot)$ to a compact subset of $\R^n$ has a compact graph, that is, given a compact set $K_0\subset \R^n$, the set $\{(x,y)\mid x\in K_0, \,y\in K(x)\}$ is compact.

(2)~As it has already been remarked in Section~\ref{secpreli}, a pullback attractor in general need not be forward attracting, in the sense that for each $(h,x)\in\mathcal{H}\times K_0$,
\begin{equation*}
\lim_{\tau\to \infty}{\rm dist}\big(y(\tau,h,x,D),A_{(h{\cdot}\tau,x)}\big)=0   \quad \hbox{for each bounded set}\; D\subset  \R^m\,
\end{equation*}
for the skew-product semiflow $\pi$; or in the sense that for each $h\in\mathcal{H}$,
\begin{equation*}
\lim_{\tau\to \infty}{\rm dist}\big(y(\tau,h,x,D),A^x_{h{\cdot}\tau}\big)=0   \quad \hbox{for each bounded set}\; D\subset  \R^m\,
\end{equation*}
for the skew-product semiflow $\pi^x$, for each fixed $x\in K_0$.

This is the case when the pullback attractor is uniformly pullback attracting, for every bounded set $D\subset  \R^m$.
Note that this forwards convergence is for the cocycle mapping and is different from the one in the definition of global attractor.

In the literature there are different approaches to describe some form of forwards convergence of the cocycle mapping, even if the attracting sets lack the desirable property of invariance. We follow the approach in~\cite{klra} by considering the {\em parametrically inflated pullback attractors\/}. This concept was introduced in~\cite{walikl}. Precisely, given $\delta>0$, the nonautonomous set $\{A_{(h,x)}[\delta]\}_{(h,x)\in \mathcal{H}\times K_0}$ with fibers defined for each $(h,x)\in \mathcal{H}\times K_0$ by
\begin{equation}\label{eq:inflated}
  A_{(h,x)}[\delta] := \bigcup_{{\rm d}((\tilde h,\tilde x),(h,x))\leq \delta} A_{(\tilde h,\tilde x)}
\end{equation}
is the so-called {\it parametrically inflated pullback attractor\/}. Here we are considering the distance ${\rm d}((\tilde h,\tilde x),(h,x)):=\sup\{d(h,\tilde h),|x-\tilde x|\}$ in the product space $\mathcal{H}\times K_0$.

Analogously, for each $x\in K_0$ fixed, given $\delta>0$ the parametrically inflated pullback attractor for $\pi^x$ is the nonautonomous set $\{A^x_{h}[\delta]\}_{h\in \mathcal{H}}$ with fibers
\begin{equation}\label{eq:inflated 2}
A^x_{h}[\delta] := \bigcup_{d(\tilde h,h)\leq \delta} A^x_{\tilde h}\,.
\end{equation}
It is convenient to note that the fiber sets $A_{(h,x)}[\delta]$ or $A^x_{h}[\delta]$ are nonempty compact subsets of $\R^m$ (see~\cite[Lemma~3.39]{klra}).
\end{rmks}

The next result on forward attraction for the cocycle mapping will be crucial in the proofs of the main theorems in Section~\ref{secTracking}.
In short, if some additional continuity assumption on the setvalued mappings $x\in K_0\mapsto \A^x$  or $(h,x)\in \mathcal{H}\times K_0\mapsto A_{(h,x)}$ holds, the description is improved, in the sense that the forward attraction sets get smaller.
We give names to these assumptions, for future reference. In both of them, the existence of the attractors is given for granted.
\begin{assu}\label{assu:cont A^x}
The setvalued mapping $x\in K_0\mapsto \A^x\subset \mathcal{H}\times \R^m$ is continuous for the Hausdorff distance $d_H$.
\end{assu}

\begin{assu}\label{assu:cont A_(h,x)}
The setvalued mapping $(h,x)\in \mathcal{H}\times K_0\mapsto A_{(h,x)}\subset \R^m$ is continuous for the Hausdorff distance $d_H$.
\end{assu}

\begin{thm}\label{teor:param inflated pullb att}
Under Assumptions~{\rm \ref{asu:1}~(ii)-(iii)} and~{\rm\ref{asu:2}},  given a compact set $K_0\subset \R^n$, let $\A\subset \mathcal{H} \times K_0 \times \R^m$  be the global attractor  for the skew-product semiflow~\eqref{eq:sk-pr} and let $\A^x\subset \mathcal{H} \times \R^m$  be the global attractor  for the skew-product semiflow~\eqref{eq:sk-pr x} for each fixed $x\in K_0$. Then, fixed a $\delta>0$,  for every bounded set $D\subset \R^m$,
\begin{equation}\label{eq:unif atract}
  \lim_{\tau\to\infty} \bigg( \sup_{h\in \mathcal{H},\, x\in K_0} {\rm dist}\big(y(\tau,h,x,D),A_{(h{\cdot}\tau,x)}[\delta]\big) \bigg) =0\,
\end{equation}
and, if Assumption~{\rm \ref{assu:cont A^x}} holds, then
\begin{equation}\label{eq:unif atract 2}
  \lim_{\tau\to\infty} \bigg( \sup_{h\in \mathcal{H},\, x\in K_0} {\rm dist}\big(y(\tau,h,x,D),A^x_{h{\cdot}\tau}[\delta]\big) \bigg) =0\,.
\end{equation}
If Assumption~{\rm \ref{assu:cont A_(h,x)}} holds, then there is uniform forward attraction, that is, for  every bounded set $D\subset \R^m$,
\begin{equation}\label{eq:unif forw}
  \lim_{\tau\to\infty} \bigg( \sup_{h\in \mathcal{H},\, x\in K_0} {\rm dist}\big(y(\tau,h,x,D),A_{(h{\cdot}\tau,x)}\big) \bigg) =0\,.
\end{equation}
\end{thm}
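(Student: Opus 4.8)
The plan is to obtain all three limits from a single source — the forward attraction of the global attractor $\A$ of $\pi$ to the bounded set $\mathcal{H}\times K_0\times D$ — and then to upgrade the two continuity hypotheses to \emph{uniform} continuity by exploiting compactness of the parameter spaces.

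First I would fix a bounded set $D\subset\R^m$ and unwind the definition of the global attractor. By Proposition~\ref{prop:existe atractor}, $\A$ is the global attractor of $\pi$ over the compact base $\mathcal{H}\times K_0$, so ${\rm dist}(\pi_\tau(\mathcal{H}\times K_0\times D),\A)\to 0$ as $\tau\to\infty$. Since $\pi_\tau(h,x,y_0)=(h{\cdot}\tau,x,y(\tau,h,x,y_0))$, this says exactly: for every $\eta>0$ there is $T=T(\eta,D)>0$ such that for all $\tau\geq T$, $h\in\mathcal{H}$, $x\in K_0$, $y_0\in D$ there exists $(\tilde h,\tilde x,\tilde y)\in\A$ — hence $\tilde x\in K_0$ and $\tilde y\in A_{(\tilde h,\tilde x)}$ — with $d(h{\cdot}\tau,\tilde h)$, $|x-\tilde x|$ and $|y(\tau,h,x,y_0)-\tilde y|$ all $<\eta$. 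Crucially $T$ does not depend on $(h,x,y_0)$, which is the uniformity behind the suprema in the statement. From here \eqref{eq:unif atract} is immediate: take $\eta\leq\delta$; then ${\rm d}((\tilde h,\tilde x),(h{\cdot}\tau,x))=\sup\{d(h{\cdot}\tau,\tilde h),|x-\tilde x|\}<\eta\leq\delta$, so by \eqref{eq:inflated} $A_{(\tilde h,\tilde x)}\subseteq A_{(h{\cdot}\tau,x)}[\delta]$, hence $\tilde y\in A_{(h{\cdot}\tau,x)}[\delta]$ and ${\rm dist}(y(\tau,h,x,y_0),A_{(h{\cdot}\tau,x)}[\delta])<\eta$; a supremum over $(h,x,y_0)$ and $\eta\to0^+$ finish it.

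For \eqref{eq:unif atract 2} I would bring in Assumption~\ref{assu:cont A^x}: since $x\mapsto\A^x$ is $d_H$-continuous on the compact set $K_0$ it is uniformly continuous, so given $\eta>0$ there is $\rho>0$ with $d_H(\A^x,\A^{\tilde x})<\eta$ whenever $x,\tilde x\in K_0$ and $|x-\tilde x|<\rho$. Running the previous paragraph with $\eta<\min\{\rho,\delta/2\}$, from $(\tilde h,\tilde y)\in\A^{\tilde x}$ and $|x-\tilde x|<\rho$ I get $(\hat h,\hat y)\in\A^x$ with $d(\tilde h,\hat h)<\eta$ and $|\tilde y-\hat y|<\eta$; then $\hat y\in A^x_{\hat h}$ with $d(\hat h,h{\cdot}\tau)\leq d(\hat h,\tilde h)+d(\tilde h,h{\cdot}\tau)<2\eta\leq\delta$, so $\hat y\in A^x_{h{\cdot}\tau}[\delta]$ by \eqref{eq:inflated 2}, while $|y(\tau,h,x,y_0)-\hat y|<2\eta$; hence the supremum over $(h,x,y_0)$ is $\leq 2\eta$ and $\eta\to0^+$ gives \eqref{eq:unif atract 2}. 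For \eqref{eq:unif forw} I would instead use Assumption~\ref{assu:cont A_(h,x)}: $(h,x)\mapsto A_{(h,x)}$ is uniformly $d_H$-continuous on the compact space $\mathcal{H}\times K_0$, so given $\eta>0$ I pick $\rho>0$ with $d_H(A_{(\tilde h,\tilde x)},A_{(h{\cdot}\tau,x)})<\eta$ whenever ${\rm d}((\tilde h,\tilde x),(h{\cdot}\tau,x))<\rho$; running the argument with $\eta<\rho$, from $\tilde y\in A_{(\tilde h,\tilde x)}$ and ${\rm d}((\tilde h,\tilde x),(h{\cdot}\tau,x))<\eta<\rho$ I obtain $\hat y\in A_{(h{\cdot}\tau,x)}$ with $|\tilde y-\hat y|<\eta$, so $|y(\tau,h,x,y_0)-\hat y|<2\eta$ and the supremum is again $\leq 2\eta\to0$.

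The routine parts are the triangle-inequality manipulations with Hausdorff semidistances and keeping track of which $\delta$-inflation contains which set (so that the nesting \eqref{eq:unif forw} $\Rightarrow$ \eqref{eq:unif atract 2} $\Rightarrow$ \eqref{eq:unif atract} stays transparent). The one point that needs genuine care — and the likeliest place for a slip — is the uniformity: the attraction statement must be invoked for the \emph{whole} bounded set $\mathcal{H}\times K_0\times D$ at once, so that $T$ is independent of the point, and the two continuity hypotheses must be promoted to \emph{uniform} continuity on the compact parameter spaces $K_0$, respectively $\mathcal{H}\times K_0$, so that $\rho$ is independent of the base point; without both observations the suprema in \eqref{eq:unif atract}–\eqref{eq:unif forw} would not be controlled.
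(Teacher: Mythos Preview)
Your argument is correct and, for \eqref{eq:unif atract} and \eqref{eq:unif atract 2}, matches the paper's proof almost line by line: both derive everything from ${\rm dist}(\pi_\tau(\mathcal{H}\times K_0\times D),\A)\to 0$ and then, for the second limit, promote Assumption~\ref{assu:cont A^x} to uniform continuity on the compact $K_0$ before applying a triangle inequality in $\mathcal{H}\times\R^m$.

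The one genuine difference is in \eqref{eq:unif forw}. The paper does not repeat the $\varepsilon$--$\rho$ argument there; instead it invokes \cite[Theorem~4.2]{chks}, which says that continuity of the section map $(h,x)\mapsto A_{(h,x)}$ forces the pullback attractor to be a \emph{uniform} pullback attractor, and then observes that uniform pullback attraction is equivalent to uniform forward attraction. Your route is more elementary and self-contained: you use uniform continuity of $(h,x)\mapsto A_{(h,x)}$ on the compact base directly to transfer the point $\tilde y\in A_{(\tilde h,\tilde x)}$ into $A_{(h{\cdot}\tau,x)}$, avoiding any appeal to the uniform-pullback theory. The paper's version has the advantage of linking the result to a known structural fact about attractors over compact bases; yours has the advantage of being a one-paragraph proof with no external citation. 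One small wording issue: in your last step ``running the argument with $\eta<\rho$'' is slightly circular since $\rho=\rho(\eta)$; what you mean is to apply the global attraction with threshold $\min\{\eta,\rho(\eta)\}$, which then yields the bound $2\eta$ exactly as you claim.
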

\begin{proof}
With no confusion in the notation, let us consider on $\mathcal{H}\times\R^n\times \R^m$ the metric given by ${\rm d}((h,x,y),(\tilde h,\tilde x,\tilde y)):=\sup\{d(h,\tilde h),|x-\tilde x|,|y-\tilde y|\}$, and on $\mathcal{H}\times \R^m$ the metric given by ${\rm d}((h,y),(\tilde h,\tilde y)):=\sup\{d(h,\tilde h),|y-\tilde y|\}$. The proof of the first two relations relies on applying the global attraction of $\A$ to the bounded set $\mathcal{H} \times K_0 \times D$. Fixed a $\delta>0$, let us take an $\ep>0$ such that $\ep\leq \delta$. Then, there is a $T=T(D,\ep)>0$ such that for every $ \tau\geq T$,
\[
{\rm dist}\bigg(\pi(\tau,\mathcal{H},K_0,D),\bigcup_{(h,x)\in\mathcal{H}\times K_0} \{(h,x)\}\times A_{(h,x)}\bigg)<\ep \,.
\]
Then, for each $\tau\geq T$, $h\in \mathcal{H}$, $x\in K_0$ and $y_0\in D$ we can find  $(\tilde h,\tilde  x)\in \mathcal{H}\times K_0$ and $\tilde y\in A_{(\tilde h,\tilde x)}$ such that $d(h{\cdot}\tau,\tilde h)<\ep$, $|x-\tilde x|<\ep$ and $|y(\tau,h,x,y_0)-\tilde y|<\ep$. In particular ${\rm d}((h{\cdot}\tau,x),(\tilde h,\tilde x))<\ep\leq \delta$, so that by the definition $\tilde y\in A_{(h{\cdot}\tau,x)}[\delta]$. Therefore,
\[
{\rm dist} \big(y(\tau,h,x,y_0),A_{(h{\cdot}\tau,x)}[\delta]\big)<\ep\,.
\]
From this, by taking superiors,~\eqref{eq:unif atract} follows.

The argument to prove~\eqref{eq:unif atract 2} is similar. Firstly, with Assumption~{\rm \ref{assu:cont A^x}} we note that the setvalued mapping $x\mapsto \A^x$ is in fact uniformly continuous on the compact set $K_0$. Then, for every $\ep>0$, with $\ep\leq \delta$,  we can find a $\tilde \delta>0$ with $\tilde \delta< \ep/2$ such that whenever $x,\tilde x\in K_0$ satisfy $|x-\tilde x|\leq \tilde \delta$, then $d_H(\A^x,\A^{\tilde x})<\ep/2$. Since $\A$ is the global attractor for $\pi$, this time we take a $T=T(D,\tilde\delta)>0$ such that for every $\tau\geq T$,
\[
{\rm dist}\bigg(\pi(\tau,\mathcal{H},K_0,D),\bigcup_{(h,x)\in\mathcal{H}\times K_0} \{(h,x)\}\times A_{(h,x)}\bigg)<\tilde \delta \,.
\]
Now, for each $\tau\geq T$, $h\in \mathcal{H}$, $x\in K_0$ and $y_0\in D$ we can find  $(\tilde h,\tilde  x)\in \mathcal{H}\times K_0$ and $\tilde y\in A_{(\tilde h,\tilde x)}$ such that $d(h{\cdot}\tau,\tilde h)<\tilde \delta$, $|x-\tilde x|<\tilde \delta$ and $|y(\tau,h,x,y_0)-\tilde y|<\tilde \delta$. In particular $|x-\tilde x|<\tilde \delta$ implies that $d_H(\A^x,\A^{\tilde x})<\ep/2$. Since $(\tilde h,\tilde y)\in \A^{\tilde x}$, we can take $(h^*,y^*)\in\A^x$ such that $d(\tilde h,h^*)<\ep/2$ and $|\tilde y-y^*|<\ep/2$. Then, $d(h{\cdot}\tau,h^*)\leq d(h{\cdot}\tau,\tilde h) +d(\tilde h,h^*)<\tilde\delta+\ep/2< \ep\leq \delta$. Since in particular $y^*\in A^x_{h^*}$, by the definition, $y^*\in A^x_{h{\cdot}\tau}[\delta]$ and $|y(\tau,h,x,y_0)-y^*|\leq |y(\tau,h,x,y_0)-\tilde y|+|\tilde y-y^*|<\tilde\delta+\ep/2< \ep$. That is, ${\rm dist} \big(y(\tau,h,x,y_0),A^x_{h{\cdot}\tau}[\delta]\big)<\ep$.
From here, the proof of~\eqref{eq:unif atract 2} is finished as before.

Finally, with the continuity in Assumption~\ref{assu:cont A_(h,x)}, it is proved in~\cite[Theorem~4.2]{chks} that the pullback attractor
 $\{A_{(h,x)}\}_{(h,x)\in \mathcal{H}\times K_0}$ is a uniform pullback attractor, that is, for every bounded set $D\subset \R^m$,
\begin{equation*}
  \lim_{\tau\to\infty} \bigg( \sup_{h\in \mathcal{H},\, x\in K_0} {\rm dist}\big(y(\tau,h{\cdot}(-\tau),x,D),A_{(h,x)}\big) \bigg) =0\,.
\end{equation*}
As it is easy to check, this is equivalent to the uniform forward attraction, as expressed in~\eqref{eq:unif forw}. The proof is finished.
\end{proof}
Although the assertion that continuity of the attractors is equivalent to equi-attraction  can be found in different references in different contexts, to finish this section  we include a short proof of this result (one implication is adapted from Li and Kloeden~\cite[Theorem~2.9]{paper:LiK}). Let us first give the definition of equi-attraction.
\begin{defn}
 Fixed a compact set $K_0\subset \R^n$, the family of attractors $\{\A^x\mid x\in K_0\}$ is said to be {\em equi-attracting\/} if given $\ep>0$ and a bounded set $D\subset \R^m$ there exists $T=T(\ep,D)$ independent of $x\in K_0$ such that ${\rm dist}\big(\pi^x(\tau,\mathcal{H},D),\A^x\big)\leq \ep$ for every $\tau\geq T$ and $x\in K_0$.
\end{defn}
\begin{thm}\label{teor:equivalence}
Under Assumptions~{\rm \ref{asu:1}~(ii)-(iii)} and~{\rm\ref{asu:2}}, given a compact set $K_0\subset \R^n$, the following statements are equivalent:
\begin{itemize}
  \item[\rm{(1)}] The map $x\in K_0\mapsto \A^x\subset \mathcal{H}\times \R^m$ is continuous for the Hausdorff distance.
  \item[\rm{(2)}] The family of attractors $\{\A^x\mid x\in K_0\}$ are equi-attracting.
\end{itemize}
\end{thm}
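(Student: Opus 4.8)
The plan is to establish the two implications separately, using throughout the following facts that are already available. Under Assumptions~\ref{asu:1}~(ii)--(iii) and~\ref{asu:2}, the proof of Proposition~\ref{prop:existe atractor} supplies a single closed ball $B=\overline B(0,c)\subset\R^m$, with $c$ the constant from Definition~\ref{def:ultimately}, such that $\bigcup_{x\in K_0}\A^{x}\subset\mathcal{H}\times B$ (recall $A_{(h,x)}=A^{x}_h$), together with a uniform absorbing time: for every bounded $D\subset\R^m$ there is $T_0=T_0(D)$ with $y(\tau,h,x,y_0)\in B$ for all $\tau\ge T_0$, $h\in\mathcal{H}$, $x\in K_0$, $y_0\in D$. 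Secondly, admissibility of $g$ together with Assumption~\ref{asu:1}~(iii) gives joint continuous dependence of $y(\tau,h,x,y_0)$ on $(\tau,h,x,y_0)$, uniform on $\mathcal{H}\times K_0\times B$ over each bounded $\tau$-interval. Thirdly, $M:=\sup\{|h(x,y,\tau)|\mid h\in\mathcal{H},\,x\in K_0,\,y\in B,\,\tau\in\R\}<\infty$, so any orbit segment of any $\pi^{x}$ ($x\in K_0$) that lies in $\mathcal{H}\times B$ is equi-Lipschitz in its fibre component. Combining the last two with the compactness of $\mathcal{H}$, a standard Arzel\`a--Ascoli/diagonal argument yields: if $x_k\to x_0$ in $K_0$ and $\gamma_k$ are orbit segments of $\pi^{x_k}$ lying in $\mathcal{H}\times B$ whose domains expand to all of $\R$, then a subsequence converges locally uniformly on $\R$ to an entire orbit of $\pi^{x_0}$ contained in $\mathcal{H}\times B$. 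I will also use that, $\mathcal{H}$ being compact, $\A^{x}$ is the maximal compact $\pi^{x}$-invariant set, equivalently the set of points lying on a bounded entire $\pi^{x}$-orbit.

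For $(1)\Rightarrow(2)$ I would argue by contradiction. If equi-attraction fails there are $\ep_0>0$, a bounded set $D$, and sequences $x_k\in K_0$, $h_k\in\mathcal{H}$, $y_k\in D$, $\tau_k\ge k$ with ${\rm dist}\big(\pi^{x_k}(\tau_k,h_k,y_k),\A^{x_k}\big)>\ep_0$. Passing to a subsequence, $x_k\to x_0\in K_0$. Since $\tau_k\ge T_0(D)$ eventually, the translated orbit pieces $\tau\mapsto\pi^{x_k}(\tau_k+\tau,h_k,y_k)$, defined for $\tau\ge T_0(D)-\tau_k$ (an interval expanding to $\R$), all take values in $\mathcal{H}\times B$; by the Arzel\`a--Ascoli/diagonal argument above, along a further subsequence they converge locally uniformly to an entire $\pi^{x_0}$-orbit in $\mathcal{H}\times B$ passing at $\tau=0$ through $(h_0,z_0):=\lim_k\pi^{x_k}(\tau_k,h_k,y_k)$. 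Hence $(h_0,z_0)\in\A^{x_0}$. On the other hand, $d_H(\A^{x_k},\A^{x_0})\to0$ by~(1), so passing to the limit in ${\rm dist}(\pi^{x_k}(\tau_k,h_k,y_k),\A^{x_k})>\ep_0$ gives ${\rm dist}((h_0,z_0),\A^{x_0})\ge\ep_0>0$ — a contradiction.

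For $(2)\Rightarrow(1)$ I would prove that $x\mapsto\A^{x}$ is both upper and lower semicontinuous along any sequence $x_k\to x_0$ in $K_0$. Upper semicontinuity, ${\rm dist}(\A^{x_k},\A^{x_0})\to0$, does not even use~(2): if it failed there would be $(h_k,z_k)\in\A^{x_k}$ staying $\ep_0$-away from $\A^{x_0}$; each lies on an entire $\pi^{x_k}$-orbit contained in $\mathcal{H}\times B$, and the compactness argument above produces an entire $\pi^{x_0}$-orbit through $\lim_k(h_k,z_k)$ inside $\mathcal{H}\times B$, forcing that limit into $\A^{x_0}$ and contradicting the separation. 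For lower semicontinuity, ${\rm dist}(\A^{x_0},\A^{x_k})\to0$, fix $\ep>0$ and let $T=T(\ep,B)$ be the equi-attraction time for the absorbing ball $B$. Given $(h_0,z_0)\in\A^{x_0}$, invariance of $\A^{x_0}$ provides $\bar y\in A^{x_0}_{h_0\cdot(-T)}\subset B$ with $\pi^{x_0}(T,h_0\cdot(-T),\bar y)=(h_0,z_0)$; equi-attraction then yields a point of $\A^{x_k}$ within $\ep$ of $\pi^{x_k}(T,h_0\cdot(-T),\bar y)=(h_0,\,y(T,h_0\cdot(-T),x_k,\bar y))$; and by the uniform continuity in the base point and parameter, $\sup_{h\in\mathcal{H},\,y_0\in B}|y(T,h,x_k,y_0)-y(T,h,x_0,y_0)|\to0$, so $y(T,h_0\cdot(-T),x_k,\bar y)\to z_0$ uniformly in $(h_0,z_0)$. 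Hence ${\rm dist}((h_0,z_0),\A^{x_k})\le\ep+o(1)$ with $o(1)$ independent of $(h_0,z_0)\in\A^{x_0}$, so $\limsup_k{\rm dist}(\A^{x_0},\A^{x_k})\le\ep$, and letting $\ep\to0$ gives lower semicontinuity.

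The step I expect to be the main obstacle is the compactness argument common to both directions: extracting, from a sequence of ever longer orbit segments of the semiflows $\pi^{x_k}$ with varying base points and varying parameters, a genuine bounded entire orbit of the limiting semiflow $\pi^{x_0}$, and identifying it as a point of $\A^{x_0}$. Making this rigorous requires carefully combining the uniform absorption into $\mathcal{H}\times B$, the equi-Lipschitz bound coming from $M$, the joint (and locally uniform) continuous dependence of the solutions on $(h,x,y_0)$, the compactness of $\mathcal{H}$, and the characterisation of $\A^{x_0}$ as the maximal compact invariant set. Everything else — invariance of the attractor used in the lower-semicontinuity step, the triangle-type inequalities for the Hausdorff semidistance, and the uniform continuity of the time-$T$ solution map — is routine.
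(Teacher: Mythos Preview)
Your argument is correct, but the route is genuinely different from the paper's, especially for $(1)\Rightarrow(2)$. The paper does not use any contradiction or Arzel\`a--Ascoli extraction: it exploits directly the existence of the global attractor $\A$ for the combined semiflow $\pi$ on $\mathcal{H}\times K_0\times\R^m$. Global attraction to $\A$ furnishes a time $T$ after which every $\pi(\tau,h,x,y_0)$ is $\tilde\delta$-close to some $(\tilde h,\tilde x,\tilde y)\in\A$, hence $(\tilde h,\tilde y)\in\A^{\tilde x}$; then the uniform continuity of $x\mapsto\A^{x}$ (on the compact $K_0$) supplies a nearby point in $\A^{x}$. This is a two-line direct estimate, avoiding the entire-orbit construction altogether. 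For $(2)\Rightarrow(1)$ the paper is again more compact: it proves directly that ${\rm dist}(\A^{x},\A^{\tilde x})<\ep$ whenever $|x-\tilde x|<\delta$, using invariance $\A^{x}=\pi^{x}(T,\A^{x})$, uniform continuity of the time-$T$ solution map in the $x$-variable, and equi-attraction, which by symmetry gives $d_H(\A^{x},\A^{\tilde x})<\ep$ in one stroke; you instead separate upper and lower semicontinuity and treat the former via the bounded-entire-orbit characterisation. What your approach buys is that it isolates the piece not needing~(2) (upper semicontinuity of $x\mapsto\A^{x}$) and makes explicit the compactness mechanism; what the paper's approach buys is brevity and the avoidance of the limit-orbit construction you flagged as the main obstacle.
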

\begin{proof}
To prove $(1)\Rightarrow (2)$, we argue as in the proof of~\eqref{eq:unif atract 2}, omitting $\delta$.  Briefly, given an $\ep>0$ there is a $\tilde\delta>0$ associated to the uniform continuity of the map in (1), and given a bounded set $D\subset \R^m$, there exists a $T=T(D,\tilde\delta)>0$ such that  for each $\tau\geq T$, $h\in \mathcal{H}$, $x\in K_0$ and $y_0\in D$, a pair $(h^*,y^*)\in\A^x$ is found so that ${\rm d}(\pi^x(\tau,h,y_0),(h^*,y^*))=\sup\{d(h{\cdot}\tau,h^*), |y(\tau,h,x,y_0)-y^*| \}<  \ep$, and thus ${\rm dist} (\pi^x(\tau,h,y_0),\A^x)<\ep$. From this it follows that ${\rm dist}\big(\pi^x(\tau,\mathcal{H},D),\A^x\big)\leq \ep$ for every $\tau\geq T$ and $x\in K_0$.

Conversely, assume that $\{\A^x\mid x\in K_0\}$ are equi-attracting and take a compact set $D\subset \R^m$ such that $\bigcup_{h\in\mathcal{H},x\in K_0} A^x_h\subset D$. Given $\ep>0$, the equi-attraction assumption provides a $T>0$ such that ${\rm dist}\big(\pi^x(\tau,\mathcal{H},D),\A^x\big)< \ep/2$ for every $\tau\geq T$ and $x\in K_0$. Now, by the joint continuity of the cocycle mapping in~\eqref{eq:sk-pr} and the compactness of the sets involved, there exists a $\delta>0$ such that, provided that $x, \tilde x\in K_0$ satisfy $|x-\tilde x|<\delta$,
$|y(T,h,x,z)-y(T,h,\tilde x,z)|< \ep/2$ for every  $h\in\mathcal{H}$ and $z\in D$.
From this it follows that ${\rm dist} (\pi^x(T,\A^x),\pi^{\tilde x}(T,\A^x))<\ep/2$ whenever  $|x-\tilde x|<\delta$. Then, by the $\pi^x$-invariance of $\A^x$ and the triangle inequality, for $x, \tilde x\in K_0$ with $|x-\tilde x|<\delta$,
\begin{align*}
  {\rm dist} (\A^x,\A^{\tilde x})= &  \,{\rm dist} (\pi^x(T,\A^x),\A^{\tilde x})\\
  \leq  & \,{\rm dist}(\pi^x(T,\A^x), \pi^{\tilde x}(T,\A^x)) + {\rm dist} ( \pi^{\tilde x}(T,\A^x), \A^{\tilde x})< \ep
\end{align*}
applying the equi-attraction property to the subset $\A^x\subset \mathcal{H}\times D$. By the definition of the Hausdorff metric, we have actually proved the uniform continuity on $K_0$. The proof is finished.
\end{proof}
\section{A dynamical approach to the fast motion and tracking of nonautonomous attractors}\label{secTracking}
This section and Section~\ref{secOmegalim} contain the main results of the paper on the dynamical behaviour of the fast variables $y_\ep(\tau)$. In the present section, we offer a first version of our results in Theorems~\ref{teor:main} and~\ref{teor:valor final} under Assumptions~\ref{asu:1} and~\ref{asu:2}, whose proofs are the core of the paper. Next, some additional conditions are added, leading to more precise dynamical descriptions applicable in a wide range of situations. We keep the notation introduced in the previous section.

Theorem~\ref{teor:main} is the first in a series of results which offer nontrivial extensions of Theorem~5.3 in Artstein~\cite{paper:ZA99} in what refers to the understanding of the behaviour of the fast motion. The cited result in~\cite{paper:ZA99} basically proves uniform convergence for the slow variables and statistical convergence for the fast variables. Let us briefly explain the meaning of this statistical convergence. Let $B$ be a compact ball in $\R^m$ such that $y_\ep(t)\in B$ for $t\in [0,t_0]$ and $0<\ep\leq \ep_0$ (see Proposition~6.6 in~\cite{paper:ZA99}). For a sequence $\ep_k\downarrow 0$ ($\ep_k\leq \ep_0$) such that $x_{\ep_k}(t)$ converge uniformly to $x_0(t)$ on $[0,t_0]$, associated to each solution $y_{\ep_k}(t)$,  $t\in [0,t_0]$ a measure is built $\widehat y_{\ep_k}\in\mathcal{\M}=\mathcal{\M}([0,t_0]\times B)$, the set of normalized Borel measures on $[0,t_0]\times B$,  by means of
\[
\int_{[0,t_0]\times B} \varphi\,d\widehat y_{\ep_k} = \frac{1}{t_0}\int_0^{t_0} \varphi(t,y_{\ep_k}(t))\,dt\quad\hbox{for each}\; \varphi\in C([0,t_0]\times B)\,.
\]
The statistical convergence of $y_{\ep_k}$  means that  $\lim_{k\to\infty} {\rm dist}\big(\widehat y_{\ep_k},\mathcal{\M}_0\big)=0$ for the set
\[
\mathcal{\M}_0:=\bigg\{\nu\in \mathcal{\M}\left| \; \nu_t\in M\big(\A^{x_0(t)}_{\Om(g)}\big) \hbox{ for the desintegration } \{\nu_t\}_{t\in [0,t_0]} \hbox{ of }\nu  \right.\bigg\},
\]
that is, the sequence of measures $\widehat y_{\ep_k}$ converges to the set $\mathcal{\M}_0$ in the weak topology. The set $M\big(\A^x_{\Om(g)}\big)$ denotes the collection of marginals (or projections on $\R^m$) of $\pi^x$-invariant measures supported on the attractor $\A^x$ restricted to the omega-limit set of $g$, that is, on the set $\A^x_{\Om(g)}:=\A^x\cap (\Om(g)\times \R^m)$.

The novelty in our work is that we get to explain the asymptotical behaviour of the fast variables from a purely dynamical point of view. The results can be read as a process of tracking of nonautonomous attractors whenever $\ep$ is small enough and we look at the location of the fast motion $y_{\ep}(\tau)$ as $\tau$ moves on an interval $[T,t_0/\ep]$.

The following lemma on the separation of solutions of close differential equations is included for completeness. We omit the proof since it follows from a direct application of the standard results on  $\ep$-approximate solutions of a differential equation (see, e.g., Theorem~1.2.1 in~\cite{book:CL}).
\begin{lem}\label{lem:comparacion}
Let $E$ be an open set of $\R^{m+1}$ and let  $g_1,g_2:E\to \R^m$ be continuous maps and consider the systems of ODEs $y'=g_1(y,\tau)$, $y'=g_2(y,\tau)$. Assume further that $g_1$ is Lipschitz in $E$ with respect to $y$ with constant $L>0$. If $y_1,\, y_2:I\to \R^m$ are solutions of $y'=g_1(y,\tau)$, $y'=g_2(y,\tau)$, respectively, with  $y_1(\tau_0)=y_2(\tau_0)$ for a $\tau_0\in I$, and
\[
|g_1(y_2(\tau),\tau)-g_2(y_2(\tau),\tau)|\leq \sigma\quad \hbox{for every }\;\tau\in I,
\]
 then
\[
|y_1(\tau)-y_2(\tau)|\leq \frac{\sigma}{L}\,\big(e^{L|\tau-\tau_0|}-1\big)\quad \hbox{for every }\;\tau\in I.
\]
\end{lem}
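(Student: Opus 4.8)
The plan is to reduce the assertion to a scalar Gronwall estimate for the function $v(\tau):=|y_1(\tau)-y_2(\tau)|$, treating the cases $\tau\geq\tau_0$ and $\tau\leq\tau_0$ separately and then combining them by symmetry. First I would pass to the integral formulation of the two equations: since $y_1(\tau_0)=y_2(\tau_0)$, for $\tau\geq\tau_0$ in $I$ one has
\[
y_1(\tau)-y_2(\tau)=\int_{\tau_0}^{\tau}\big(g_1(y_1(s),s)-g_2(y_2(s),s)\big)\,ds\,.
\]
Writing $g_1(y_1(s),s)-g_2(y_2(s),s)=\big(g_1(y_1(s),s)-g_1(y_2(s),s)\big)+\big(g_1(y_2(s),s)-g_2(y_2(s),s)\big)$, noting that the graphs of $y_1$ and $y_2$ lie in $E$ so the Lipschitz bound on $g_1$ applies, and using the hypothesis $|g_1(y_2(\tau),\tau)-g_2(y_2(\tau),\tau)|\leq\sigma$, I obtain
\[
v(\tau)\leq \sigma(\tau-\tau_0)+L\int_{\tau_0}^{\tau}v(s)\,ds\,,\qquad \tau\geq\tau_0\,.
\]

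Next I would apply the integral form of Gronwall's inequality with inhomogeneous term $\alpha(s)=\sigma(s-\tau_0)$, which yields $v(\tau)\leq \sigma(\tau-\tau_0)+L\int_{\tau_0}^{\tau}\sigma(s-\tau_0)\,e^{L(\tau-s)}\,ds$; a routine integration by parts of $\int_{\tau_0}^{\tau}(s-\tau_0)e^{L(\tau-s)}\,ds$ collapses the right-hand side exactly to $\tfrac{\sigma}{L}\big(e^{L(\tau-\tau_0)}-1\big)$. A slightly cleaner route, which I would probably present instead, is to observe that $y_1,y_2\in C^1$ force $v$ to be locally Lipschitz, hence a.e.\ differentiable with $v'(\tau)\leq|y_1'(\tau)-y_2'(\tau)|\leq L\,v(\tau)+\sigma$ a.e.; then the auxiliary function $\psi(\tau):=\big(v(\tau)+\tfrac{\sigma}{L}\big)e^{-L\tau}$ satisfies $\psi'\leq 0$ a.e.\ on $[\tau_0,\sup I)$, so it is non-increasing there and $\psi(\tau)\leq\psi(\tau_0)=\tfrac{\sigma}{L}e^{-L\tau_0}$, which rearranges to the claimed bound. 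For $\tau\leq\tau_0$ I would run the identical argument on the time-reversed equations (equivalently, integrate from $\tau$ to $\tau_0$ and replace $\tau-\tau_0$ by $\tau_0-\tau$), getting $v(\tau)\leq\tfrac{\sigma}{L}\big(e^{L(\tau_0-\tau)}-1\big)$; the two cases together give $v(\tau)\leq\tfrac{\sigma}{L}\big(e^{L|\tau-\tau_0|}-1\big)$ for every $\tau\in I$.

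There is no genuine obstacle in this argument; the only points needing a little care are that it be carried out on both sides of $\tau_0$, which is handled by time reversal, and that $v=|y_1-y_2|$ may fail to be differentiable at interior zeros of $y_1-y_2$, which is precisely why I would favour either the integral Gronwall inequality (which avoids differentiability altogether) or the absolutely continuous function $\psi$, for which an a.e.\ differential inequality suffices. This is exactly the content of the $\ep$-approximate-solution estimate of~\cite{book:CL}, specialized to the case in which the genuine solution $y_2$ of $y'=g_2(y,\tau)$ is regarded as an approximate solution of $y'=g_1(y,\tau)$ with defect bounded by $\sigma$ along its own trajectory.
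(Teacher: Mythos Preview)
Your proof is correct and follows exactly the approach the paper indicates: the paper omits the proof entirely, referring to the standard $\ep$-approximate-solution estimate (Theorem~1.2.1 in~\cite{book:CL}), which is precisely what you have written out in detail and identified explicitly at the end of your argument.
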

We now state the first result. Recall the definition of the  parametrically inflated pullback attractor given in~\eqref{eq:inflated}.
\begin{thm}\label{teor:main}
Under Assumptions~{\rm \ref{asu:1}} and~{\rm\ref{asu:2}}, for each $\ep>0$  let $(x_\ep(t),y_\ep(t))$ be the solution of~\eqref{eq:slowfastent}-\eqref{eq:initial values}. Then, given $t_0>0$, for $\ep$ small enough, $(x_\ep(t),y_\ep(t))$ can be extended to $t\in [0,t_0]$. Furthermore:
\begin{itemize}
\item[\rm{(i)}]  Every sequence $\ep_j\to 0$ has a subsequence, say $\ep_k\downarrow 0$, such that $x_{\ep_k}(t)$ converges uniformly for $t\in [0,t_0]$ to a solution of the differential inclusion
\begin{equation}\label{eq:diff incl}
\dot{x}  \in \left\{ \int_{\R^m} f(x,y)\,d\mu(y) \left| \;\mu\in M\big(\A^x_{\Om(g)}\big)\right.\right\},\quad x(0)=x_0\,.
\end{equation}
Let the limit be $x_0(t)$, for $t\in [0,t_0]$.
\item[\rm{(ii)}] For the fast variables in the fast timescale $y_{\ep_k}(\tau)$, the behaviour is as follows. For a fixed small $r>0$, let $K_0$ be the tubular set
\begin{equation}\label{eq:K0}
K_0:=\big\{x_0(t)+x\mid t\in [0,t_0],\;x\in\R^n \;{\rm with}\;|x|\leq r\big\}\subset \R^n
\end{equation}
which is compact and let $\A\subset \mathcal{H} \times K_0 \times \R^m$ be the global attractor given in Proposition~{\rm\ref{prop:existe atractor}} for the  skew-product semiflow~\eqref{eq:sk-pr}.  Then, given $\delta >0$, there exist $T=T(\delta,y_0,\A)>0$ and $k_0=k_0(\delta,T)>0$ with $2T<t_0/\ep_{k_0}$ such that for every $k\geq k_0$,
\begin{equation}\label{eq:teorema}
 {\rm dist}\big(y_{\ep_k}(\tau),A_{(g{\cdot}\tau,x_0(\ep_k\tau))}[\delta]\big)\leq  \delta \quad \hbox{for all }\; \tau\in [T,t_0/\ep_k]\,.
\end{equation}
\end{itemize}
\end{thm}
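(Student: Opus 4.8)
\emph{Overall plan.} The plan is to derive part~(i) and a fixed a priori bound on the fast motion from Artstein's results, and then to prove part~(ii) by a ``freezing'' comparison argument combining Lemma~\ref{lem:comparacion} with the forward attraction to the inflated pullback attractor given by Theorem~\ref{teor:param inflated pullb att}.

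\emph{Part (i), a priori bound and setup.} For $\ep$ small the solutions of~\eqref{eq:slowfastent} exist on $[0,t_0]$: the slow component stays bounded by the Gronwall estimate recorded after Assumption~\ref{asu:1}, and the fast component stays in a fixed compact set $\widehat B\subset\R^m$ for $\tau\in[0,t_0/\ep]$ by Proposition~6.6 of Artstein~\cite{paper:ZA99}, applicable here by Remarks~\ref{rmk:atractor uniforme} (alternatively, this last bound follows from Assumption~\ref{asu:2} by chaining the comparison estimate below over windows of fixed length, on each of which the frozen layer solution re-enters the absorbing ball). On such $\ep$, since $|\dot x_\ep|=|f(x_\ep,y_\ep)|\le a+b|x_\ep|$ the maps $x_\ep$ are uniformly bounded and Lipschitz on $[0,t_0]$, so every $\ep_j\to0$ has a subsequence $\ep_k\downarrow0$ along which $x_{\ep_k}$ converges uniformly on $[0,t_0]$ to some $x_0(\cdot)$; that $x_0(\cdot)$ solves~\eqref{eq:diff incl} with $x(0)=x_0$ is Theorem~5.3 of~\cite{paper:ZA99}. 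From here fix $r>0$ and the tube $K_0$ of~\eqref{eq:K0}, let $\A\subset\mathcal{H}\times K_0\times\R^m$ be as in Proposition~\ref{prop:existe atractor}, write $\xi_{\ep_k}(\tau):=x_{\ep_k}(\ep_k\tau)$ (so $y_{\ep_k}$ solves $y'=g(\xi_{\ep_k}(\tau),y,\tau)$, $y(0)=y_0$), and note that $|\xi_{\ep_k}'(\tau)|\le\ep_k M$ for a constant $M$ independent of $k$ (by Assumption~\ref{asu:1}~(i) and the Gronwall bound), and that for $k$ large $\|x_{\ep_k}-x_0\|_\infty<r$, hence $\xi_{\ep_k}(\tau)\in K_0$ for all $\tau\in[0,t_0/\ep_k]$.

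\emph{Freezing and comparison.} Fix $\delta>0$. By~\eqref{eq:unif atract} applied to the bounded set $\widehat B$ with inflation $\delta/2$ there is $T_1=T_1(\delta,\widehat B)>0$ such that
\[
\sup_{h\in\mathcal{H},\,x\in K_0}{\rm dist}\big(y(T_1,h,x,\widehat B),\,A_{(h{\cdot}T_1,\,x)}[\delta/2]\big)<\delta/2\,.
\]
Fix $k$ large and $\tau\in[2T_1,t_0/\ep_k]$, freeze the parameter at $x^*:=\xi_{\ep_k}(\tau-T_1)\in K_0$ over the window $[\tau-T_1,\tau]$, and let $z$ solve $z'=g(x^*,z,\sigma)$ there with $z(\tau-T_1)=y_{\ep_k}(\tau-T_1)$. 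By~\eqref{eq:translated}, $z(\tau)=y\big(T_1,g{\cdot}(\tau-T_1),x^*,y_{\ep_k}(\tau-T_1)\big)$ and $(g{\cdot}(\tau-T_1)){\cdot}T_1=g{\cdot}\tau$; since $y_{\ep_k}(\tau-T_1)\in\widehat B$, the displayed inequality gives ${\rm dist}\big(z(\tau),A_{(g{\cdot}\tau,\,x^*)}[\delta/2]\big)<\delta/2$. On the other hand, Lemma~\ref{lem:comparacion} on $[\tau-T_1,\tau]$ with $g_1=g(x^*,\cdot,\cdot)$ (Lipschitz in $y$ with a constant $L=L(K_0,\widehat K)$ uniform over $x^*\in K_0$, on a suitable fixed compact set $\widehat K\supset\widehat B$, by Assumption~\ref{asu:1}~(iii)) and $g_2=g(\xi_{\ep_k}(\cdot),\cdot,\cdot)$, together with $|x^*-\xi_{\ep_k}(\sigma)|\le\ep_k MT_1$ on the window and the uniform continuity of $g$ in its first variable on $K_0\times\widehat K\times\R$ (Assumption~\ref{asu:1}~(ii)), yields
\[
|y_{\ep_k}(\tau)-z(\tau)|\le\frac{\omega(\ep_k MT_1)}{L}\big(e^{LT_1}-1\big)=:\eta_k\,,
\]
$\omega$ being a modulus of continuity of $g$ in its first variable; thus $\eta_k\to0$ as $k\to\infty$, uniformly in $\tau$.

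\emph{Conclusion of (ii) and main obstacle.} Since $|x^*-x_0(\ep_k\tau)|\le\ep_k MT_1+\|x_{\ep_k}-x_0\|_\infty=:\beta_k\to0$ uniformly in $\tau$, choose $k_0=k_0(\delta,T_1)$ so that $\eta_k<\delta/2$ and $\beta_k\le\delta/2$ for $k\ge k_0$ and $\ep_{k_0}<t_0/(4T_1)$; put $T:=2T_1$, so $2T<t_0/\ep_{k_0}$. For $k\ge k_0$, $\beta_k\le\delta/2$ forces $A_{(g{\cdot}\tau,\,x^*)}[\delta/2]\subseteq A_{(g{\cdot}\tau,\,x_0(\ep_k\tau))}[\delta]$, because ${\rm d}((\tilde h,\tilde x),(g{\cdot}\tau,x^*))\le\delta/2$ implies ${\rm d}((\tilde h,\tilde x),(g{\cdot}\tau,x_0(\ep_k\tau)))\le\delta$; hence, for every $\tau\in[T,t_0/\ep_k]$,
\[
{\rm dist}\big(y_{\ep_k}(\tau),A_{(g{\cdot}\tau,\,x_0(\ep_k\tau))}[\delta]\big)\le|y_{\ep_k}(\tau)-z(\tau)|+{\rm dist}\big(z(\tau),A_{(g{\cdot}\tau,\,x^*)}[\delta/2]\big)<\eta_k+\delta/2<\delta\,,
\]
which is~\eqref{eq:teorema}. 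The main obstacle is that~\eqref{eq:teorema} is required uniformly on the interval $[T,t_0/\ep_k]$, whose length diverges, whereas the fast variable is only governed by a frozen layer equation on windows of bounded length: this is what makes the a priori trapping of $y_{\ep_k}$ in the fixed set $\widehat B$ indispensable (so the comparison errors over consecutive windows do not accumulate), and it forces running the attraction estimate with the smaller inflation $\delta/2$ in order to absorb both the comparison error $\eta_k$ and the parameter mismatch $\beta_k$ between $\xi_{\ep_k}(\tau-T_1)$ and the base point $x_0(\ep_k\tau)$ of~\eqref{eq:teorema}; part~(i), being in essence Artstein's theorem, is not the difficulty here.
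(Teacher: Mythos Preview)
Your proof is correct and takes a genuinely different route from the paper's.

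The paper proves (ii) by an \emph{iterative bootstrap}: it fixes a ball $D\supset\{y_0\}$ containing a $1$-neighbourhood of the attractor fibers, obtains the attraction time $T$ for $D$, and then marches over consecutive windows $[iT,(i+2)T]$, at each step freezing the slow parameter at $x_0(\ep_k s)$ and using Lemma~\ref{lem:comparacion}. The key point is that the conclusion of step $i$ (namely $y_{\ep_k}((i+1)T)$ is $\delta$-close to an attractor fiber) feeds back to guarantee $y_{\ep_k}((i+1)T)\in D$, which is the input needed to launch step $i+1$. Thus the paper never invokes an external a~priori bound on $y_{\ep_k}$; the trapping in $D$ is generated inductively by the argument itself.

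You instead import the uniform a~priori bound $y_{\ep_k}(\tau)\in\widehat B$ from Artstein's Proposition~6.6 at the outset. This lets you dispense with the iteration entirely: for each $\tau$ you freeze at $x^*=\xi_{\ep_k}(\tau-T_1)$ over a single window of length $T_1$, and since the initial point $y_{\ep_k}(\tau-T_1)$ is already known to lie in $\widehat B$, the attraction estimate applies directly. Your handling of the parameter mismatch via the nesting $A_{(g{\cdot}\tau,x^*)}[\delta/2]\subseteq A_{(g{\cdot}\tau,x_0(\ep_k\tau))}[\delta]$ when $|x^*-x_0(\ep_k\tau)|\le\delta/2$ is a clean device the paper does not use (it freezes at $x_0(\ep_k s)$ instead and absorbs the mismatch into the Gronwall comparison). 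The upshot: your argument is shorter and non-iterative, but leans on one more external result; the paper's is self-contained for part~(ii) at the price of the inductive scaffolding. One small point you leave implicit: to apply Lemma~\ref{lem:comparacion} you need the frozen solution $z$ to stay in the region where the Lipschitz bound holds, which is guaranteed by taking $\widehat K\supseteq y([0,T_1],\mathcal H,K_0,\widehat B)$; the paper makes the analogous step explicit via its set $K$ in~\eqref{eq:K}.
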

\begin{proof}
First of all, as it has been noted in Remark~\ref{rmk:atractor uniforme}~(1), we can apply Theorem~5.3 in~\cite{paper:ZA99}.
The statement in (i) above on the behaviour of the slow variables reproduces what this theorem says, except for actualizing the set of measures in the differential inclusion.  Going into detail in Artstein's proof, it can be checked that in fact the measures involved are the projections of the $\pi^x$-invariant measures supported on $\A^x_{\Om(g)}$.

Let us now prove the statement in (ii) for the fast variables.   For convenience, from now on for each $\ep>0$ we write the slow variables with respect to the slow timescale $x_\ep(t)$, $t\in [0,t_0]$, but we write the fast variables with respect to the fast timescale $y_\ep(\tau)$, $\tau\in [0,t_0/\ep]$.

Given $0<\delta<1$, first of all let us determine the appropriate value of $T$ as the attraction time for a precise bounded set according to Theorem~\ref{teor:param inflated pullb att}. More precisely, take  $\rho_0>0$ big enough so that
\begin{equation}\label{eq:rho0}
  \bigcup_{(h,x)\in \mathcal{H}\times K_0} A_{(h,x)} \subset \overline B(0,\rho_0)\subset \R^m
\end{equation}
and consider the compact set  $D:=\overline B(0,\rho)\subset \R^m$ for a fixed $\rho\geq \max(|y_0|,\rho_0+1)$. Note that $D$ depends on the attractor $\A$ and the initial value $y_0$. Associated to this set $D$ and to $\delta>0$, it follows from~\eqref{eq:unif atract} in Theorem~\ref{teor:param inflated pullb att} that there is a $T=T(D,\delta)$ such that for every  $(h,x)\in\mathcal{H}\times K_0$,
\begin{equation*}
   {\rm dist}\big(y(\tau,h,x,D),A_{(h{\cdot}\tau,x)}[\delta]\big) < \frac{\delta}{2}
\end{equation*}
for every $\tau\geq T$. For our purposes we will be using that, since by construction  $x_0(\ep_k s)\in K_0$ for every $k\geq 1$ and every $s\in [0,t_0/\ep_k]$, in particular we have  that
\begin{equation}\label{eq:dist tau mayor T}
   {\rm dist}\big(y(\tau,g{\cdot}\gamma,{x_0(\ep_k s)},D),A_{(g{\cdot}(\gamma+\tau),x_0(\ep_k s))}[\delta]\big) < \frac{\delta}{2}
\end{equation}
 for every $\gamma\in \R$, $k\geq 1$ and every $s,\tau\in [0,t_0/\ep_k]$, provided that $\tau\geq T$.
\par\smallskip
Secondly, let us determine the value of $k_0$. This is rather technical. By the continuity of the skew-product semiflow and the compactness of the sets involved, the set
\begin{equation}\label{eq:K}
  K:=y\big([0,2T]\times \mathcal{H}\times K_0\times D \big) \subset \R^m
\end{equation}
is compact. It is a common procedure in nonautonomous dynamics (e.g., see~\cite{shyi}) to write the family of equations $y'=h(x,y,\tau)$, $h\in\mathcal{H}$, $ x\in K_0$ as
\begin{equation}\label{eq:G def}
  y'=G(x,y,h{\cdot}\tau)\,,\quad h\in\mathcal{H}\,,\;  x\in K_0
\end{equation}
for a continuous map $G:K_0\times \R^m\times \mathcal{H} \to \R^m$, defined by $G(x,y,h)=h(x,y,0)$, which extends $g$ in the sense that $G(x,y,g{\cdot}\tau)=g(x,y,\tau)$ for all $x\in K_0$,  $y\in\R^m$ and $\tau\in\R$. In order to apply Lemma~\ref{lem:comparacion}, let us fix a relatively compact open set $U$ of $\R^m$ such that $K\subset U\subset \overline U$. It is easy to check that the family of equations over the hull inherits the Lipschitz character in $y$ expressed in Assumption~\ref{asu:1} (iii)  in the following sense: given the compact set $K_0$ of $\R^n$ and the compact set $\overline U$ of $\R^m$, there exists $L=L(K_0,\overline U)>0$ such that for every $h\in\mathcal{H}$,
\[
      |h(x,y_1,\tau)-h(x,y_2,\tau)|\leq L\,|y_1-y_2|
\]
for every $y_1,y_2\in \overline U$, $x\in K_0$ and $\tau\in \R$; or, equivalently,
\[
      |G(x,y_1,h{\cdot}\tau)-G(x,y_2,h{\cdot}\tau)|\leq L\,|y_1-y_2|
\]
for every $h\in\mathcal{H}$, $y_1,y_2\in \overline U$, $x\in K_0$ and $\tau\in \R$. Have in mind that $K\subset U\subset \overline U$.

Given the values of $\delta$ and $T>0$, and the Lipschitz constant $L>0$ associated to $K_0$ and $\overline U$, let us consider $\sigma>0$ small enough
 so that
\begin{equation}\label{eq:sigma}
\des\frac{\sigma}{L}\,\big(e^{L2T}-1\big)\leq \des\frac{\delta}{2}\,.
\end{equation}

Then,  associated to $\sigma>0$ we can find an integer $k_0$ with  $2T<t_0/\ep_{k_0}$ and so that for every $k\geq k_0$, $x_{\ep_k}(t)\in K_0$ for every $t\in [0,t_0]$ and
\begin{equation}\label{eq:G}
  \big|G(x_{\ep_k}(\ep_k\tau),y,h)-G(x_0(\ep_k s),y,h)\big|\leq \sigma
\end{equation}
for every $\tau,s\in [0,t_0/\ep_k]$ with $|\tau-s|\leq 2T$,  $y\in K$ and $h\in\mathcal{H}$. To see it, write
\begin{align*}
  \big|G(x_{\ep_k}(\ep_k\tau),y,h)-G(x_0(\ep_k s),y,h)\big|\leq & \big|G(x_{\ep_k}(\ep_k\tau),y,h)-G(x_0(\ep_k \tau),y,h)\big| \\
   &  +\big|G(x_{0}(\ep_k\tau),y,h)-G(x_0(\ep_k s),y,h)\big|\,,
\end{align*}
recall the choice of $K_0$ (see~\eqref{eq:K0}) and apply that  $G$ is uniformly continuous on the compact set  $K_0\times K\times \mathcal{H}$, $x_{\ep_k}(t)\to x_0(t)$ uniformly for $t\in [0,t_0]$, $x_0(t)$ is uniformly continuous on $[0,t_0]$, $|\ep_k\tau-\ep_k s|\leq 2T\ep_k$ and $\ep_k\downarrow 0$.

\par\smallskip
Once we have determined the values of $T$ and $k_0$, in order to prove~\eqref{eq:teorema} we follow an iterative process, by considering successive time intervals of length $T$, that is, $[T,2T]$, $[2T,3T]$, and so on,  until the whole interval for the fast timescale $[T,t_0/\ep_k]$ has been swept for each $k\geq k_0$. Note that the number of iterations is always finite and depends on $k$, although we will omit this fact in the notation. This iterative process allows us to check that the fast dynamics $y_{\ep_k}(\tau)$ remains close to the compact set $K$ defined in~\eqref{eq:K} for $\tau\in [T,t_0/\ep_k]$. At each iteration ($i\geq 0$) the trick relies on relation~\eqref{eq:translated} for the solutions. On the one hand, we select appropriate values of $y_{0}^{ik}\in D$ for $k\geq k_0$, and for each fixed $s\in [(i+1)T,(i+2)T]$ we prove that the solution $y(\tau,g,x_0(\ep_k s),iT, y_{0}^{ik})$  is in $K$ and remains close to $y_{\ep_k}(\tau)$ for all $\tau\in[iT,(i+2)T]$. Thus,  in particular, we can approximate  $y_{\ep_k}(s)$  by the value of $\left.y(\tau,g,x_0(\ep_k s),iT, y_{0}^{ik})\right|_{\tau=s}=\left.y(\tau-iT,g{\cdot}(iT),x_0(\ep_k s), y_{0}^{ik})\right|_{\tau=s}$, using~\eqref{eq:translated}. On the other hand, we know that for times $\tau-iT\geq T$, that is, if $\tau\geq (i+1)T$, this solution is within a short distance of the corresponding section of the parametrically inflated pullback attractor, as expressed in~\eqref{eq:dist tau mayor T}.

Let us explain in detail the first and second steps of this iterative process.
\par\smallskip

{\it First step\/} ($i=0$): Let us start by proving~\eqref{eq:teorema} for $\tau\in [T,2T]$ for $k\geq k_0$. For each $k\geq k_0$,    let $y_0^{0k}=y_0\in D$, the one given in~\eqref{eq:initial values}.

Let us fix $s\in [T,2T]$. On the interval $[0,2T]$ of length $2T$ we can apply Lemma~\ref{lem:comparacion}. More precisely, for each $k\geq k_0$ we have that:
\begin{itemize}
  \item $y_{\ep_k}(\tau)$ is a solution of $y'=g(x_{\ep_k}(\ep_k\tau),y,\tau)$ for $\tau\in [0,2T]$;
  \item the map $g_1(y,\tau):=g(x_{\ep_k}(\ep_k\tau),y,\tau)$ is continuous on $U \times [0,2T]$ and Lipschitz with respect to $y$ with constant $L>0$ determined above;
  \item $y_2(\tau):=y(\tau,g,x_0(\ep_k s),y_0)$ is a solution of $y'=g(x_0(\ep_k s),y,\tau)$ for $\tau\in [0,2T]$;
  \item $y_{\ep_k}(0)=y_2(0)=y_0\in D$;
  \item $y_2(\tau)\in K$  for every $\tau\in [0,2T]$ by construction (see~\eqref{eq:K});
  \item $\big|g(x_{\ep_k}(\ep_k\tau),y_2(\tau),\tau)-g(x_0(\ep_k s),y_2(\tau),\tau)\big|\leq \sigma$ for every $\tau\in [0,2T]$, because of~\eqref{eq:G}.
\end{itemize}
Therefore, for $k\geq k_0$ and for every $\tau\in [0,2T]$,
\begin{equation*}
  |y_{\ep_k}(\tau)-y_2(\tau)|\leq \des\frac{\sigma}{L}\,\big(e^{L(\tau-0)}-1\big)\leq \des\frac{\sigma}{L}\,\big(e^{L2T}-1\big)\leq \des\frac{\delta}{2}\,,
\end{equation*}
by the previous choice of $\sigma$ in~\eqref{eq:sigma}.  Then, by evaluating at $\tau=s$ we can conclude that for every $k\geq k_0$ and $s\in [T,2T]$,
\begin{equation}\label{eq:previa}
\big|y_{\ep_k}(s)-\left.y(\tau,g,x_0(\ep_k s),y_0)\right|_{\tau=s}\big|\leq  \des\frac{\delta}{2}\,.
\end{equation}
Now, note that for $\tau\geq T$, since $y_0\in D$,  ${\rm dist}\big(y(\tau,g,{x_0(\ep_k s)},y_0),A_{(g{\cdot}\tau,x_0(\ep_k s))}[\delta]\big) \leq {\rm dist}\big(y(\tau,g,{x_0(\ep_k s)},D),A_{(g{\cdot}\tau,x_0(\ep_k s))}[\delta]\big) <\delta/2$ by~\eqref{eq:dist tau mayor T}. Once more evaluating at $\tau=s$, this, combined with~\eqref{eq:previa}, allows us to deduce that for every $k\geq k_0$, ${\rm dist}\big(y_{\ep_k}(s),A_{(g{\cdot}s,x_0(\ep_k s))}[\delta]\big)\leq  \delta$ for every $s\in [T,2T]$, i.e.,~\eqref{eq:teorema} holds on the interval $[T,2T]$.

Note that in particular, ${\rm dist}\big(y_{\ep_k}(T),A_{(g{\cdot}T,x_0(\ep_k T))}[\delta]\big)\leq  \delta$. Therefore, by~\eqref{eq:rho0} also ${\rm dist}\big(y_{\ep_k}(T), \overline B(0,\rho_0)\big)\leq  \delta<1$ and by the choice of $D$ it follows that $y_{\ep_k}(T)\in  D$. This is our starting point for the second step.

\par\smallskip
{\it Second step\/} ($i=1$): Let us explain how we fall into an iterative process, by taking a new step. To prove~\eqref{eq:teorema} on the interval $[2T,3T]$, assume for simplicity of writing that $3T<t_0/\ep_k$ for $k\geq k_0$  and fix $s\in [2T,3T]$ (otherwise, we would take $s\in [2T,t_0/\ep_k]$). This time, we apply Lemma~\ref{lem:comparacion} on the interval $[T,3T]$ by taking as initial value at time $T$ for each $k\geq k_0$, $y_{0}^{1k}:=y_{\ep_k}(T)\in D$ (as mentioned in the previous paragraph) and considering the following facts for each $k\geq k_0$:
\begin{itemize}
  \item $y_{\ep_k}(\tau)$ is a solution of $y'=g(x_{\ep_k}(\ep_k\tau),y,\tau)$ for $\tau\in [T,3T]$;
  \item the map $g_1(y,\tau):=g(x_{\ep_k}(\ep_k\tau),y,\tau)$ is continuous on $U \times [T,3T]$ and Lipschitz with respect to $y$ with constant $L>0$ determined above;
  \item $y_2(\tau):=y(\tau,g,x_0(\ep_k s),T,y_0^{1k})$ is a solution of $y'=g(x_0(\ep_k s),y,\tau)$ for $\tau\in [T,3T]$;
  \item $y_{\ep_k}(T)=y_2(T)= y_0^{1k}\in  D$;
  \item $y_2(\tau)\in K$, given in~\eqref{eq:K}, for every $\tau\in [T,3T]$, since by relation~\eqref{eq:translated},    $y_2(\tau)=y(\tau-T,g{\cdot}T,x_0(\ep_k s),y_{0}^{1k})$,  $\tau-T\in [0,2T]$  and $y_{0}^{1k}\in D$;
  \item $\big|g(x_{\ep_k}(\ep_k\tau),y_2(\tau),\tau)-g(x_0(\ep_k s),y_2(\tau),\tau)\big|\leq \sigma$ for every $\tau\in [T,3T]$, because of~\eqref{eq:G}.
\end{itemize}
Therefore, Lemma~\ref{lem:comparacion} says that for every $\tau\in[T,3T]$ and for every $k\geq k_0$,
\[
|y_{\ep_k}(\tau)-y_2(\tau)|\leq \des\frac{\sigma}{L}\,\big(e^{L(\tau-T)}-1\big)\leq \des\frac{\sigma}{L}\,\big(e^{L2T}-1\big)\leq \des\frac{\delta}{2}\,.
\]
By evaluating at $\tau=s$ we can conclude that for every $k\geq k_0$ and $s\in [2T,3T]$,
\begin{equation}\label{eq:previa2}
\big|y_{\ep_k}(s)-\left.y(\tau-T,g{\cdot}T,x_0(\ep_k s),y_{0}^{1k})\right|_{\tau=s}\big|\leq  \des\frac{\delta}{2}\,.
\end{equation}
Now, note that for $\tau\geq 2T$, $\tau-T\geq T$ and since $y_{0}^{1k}\in D$, we can deduce from~\eqref{eq:dist tau mayor T} that
\[
{\rm dist}\big(y(\tau-T,g{\cdot}T,{x_0(\ep_k s)},y_{0}^{1k}),A_{(g{\cdot}\tau,x_0(\ep_k s))}[\delta]\big) <\delta/2\,.
\]
Again evaluating at $\tau=s$, by the triangle  inequality, using~\eqref{eq:previa2}, we conclude that for every $k\geq k_0$, ${\rm dist}\big(y_{\ep_k}(s),A_{(g{\cdot}s,x_0(\ep_k s))}[\delta]\big)\leq  \delta$ for every $s\in [2T,3T]$, i.e.,~\eqref{eq:teorema} holds on the interval $[2T,3T]$.

Looking ahead to the next step,  since ${\rm dist}\big(y_{\ep_k}(2T),A_{(g{\cdot}2T,x_0(\ep_k 2T))}[\delta]\big)\leq  \delta$, by construction it is deduced that $y_{\ep_k}(2T)\in  D$.

By iterating this procedure a finite number of steps, depending on $k$, the proof of (ii) is finished.
\end{proof}
\begin{rmks}\label{rmk:growth bound}
(1) As we mentioned in Section \ref{secSlowfast}, one can often disregard the growth bound in Assumption~\ref{asu:1}~(i).  In fact, whenever the solutions $(x_\ep(t),y_\ep(t))$ exist and   $x_\ep(t)$ are  uniformly bounded on an interval $[0,t_0]$ for $0<\ep\leq \ep_0$, also $y_\ep(t)$  turn out to be uniformly bounded on  $[0,t_0]$ for $0<\ep\leq \ep_0$ (this is proved in~\cite[Proposition 6.6]{paper:ZA99}, once Assumption~\ref{asu:2} is also imposed). Then, the map $f(x,y)$ in~\eqref{eq:slowfastent} can be substituted by a map $f(x,y)\,\varphi(x,y)$ which satisfies Assumption~\ref{asu:1}~(i), preserving the solutions. It suffices to take a map $\varphi\in C^1(\R^n\times \R^m,\R)$, $0\leq \varphi\leq 1$, which takes the value $1$ if $|x|+|y|\leq 2c$ and takes the value $0$ if $|x|+|y|\geq 2c+1$, for a positive $c>0$ such that $|x_\ep(t)|$, $|y_\ep(t)|\leq c$ for every $t\in [0,t_0]$ and $0<\ep\leq \ep_0$.

(2) If the set $M\big(\A^x_{\Om(g)}\big)=\{\mu^x\}$, that is,  when there is a unique invariant measure for $\pi^x$  concentrated on $\A^x_{\Om(g)}$, and assuming uniqueness of solutions for~\eqref{eq:integral}, then $x_{\ep}(t)$ converges uniformly on $[0,t_0]$ as $\ep\to 0$ to the unique solution of the differential equation
\begin{equation}\label{eq:integral}
\dot{x}  = \int_{\R^m} f(x,y)\,d\mu^x(y)\, ,\quad x(0)=x_0\,.
\end{equation}
This can be understood as a version of the theorem of averaging in the terms described by Artstein~\cite{paper:ZA07} and Sanders and Verhulst~\cite{book:SaVe}.
\end{rmks}
When particular attention is paid to the fast motion at the final time on the fast timescale, or equivalently, to $y_{\ep_k}(t_0)$ on the slow timescale, we offer this result.
\begin{thm}\label{teor:valor final}
Under Assumptions~{\rm \ref{asu:1}} and~{\rm\ref{asu:2}}, assume that $x_{\ep_k}(t)$ converges  to $x_0(t)$ uniformly on $[0,t_0]$ and let the compact set $K_0$ and the global attractor $\A$ be the ones defined in Theorem~{\rm \ref{teor:main}}. For each integer $k\geq 1$, let
\begin{equation*}
 \delta_k:=\inf\big\{ \delta>0 \mid  {\rm dist}\big(y_{\ep_k}(t_0/\ep_k),A_{(g{\cdot}(t_0/\ep_k),x_0(t_0))}[\delta]\big)\leq  \delta  \big\}\,.
\end{equation*}
 Then:
 \begin{itemize}
   \item[\rm{(i)}] $\des\lim_{k\to \infty} \delta_k =0$;
   \item[\rm{(ii)}] $\des\lim_{k\to \infty}  {\rm dist}\big(y_{\ep_k}(t_0/\ep_k),A_{(g{\cdot}(t_0/\ep_k),x_0(t_0))}[\delta_k]\big) =0$;
 \item[\rm{(iii)}] if $g{\cdot}(t_0/\ep_k)\to h_0$ in $(\mathcal{H},d)$ as $k\to\infty$, then $h_0\in \Om(g)$ and
 \begin{equation*}
   \lim_{k\to \infty}  {\rm dist}\big(y_{\ep_k}(t_0/\ep_k),A_{(h_0,x_0(t_0))}\big) =0\,.
 \end{equation*}
 \end{itemize}
\end{thm}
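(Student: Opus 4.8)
The plan is to handle the three assertions in turn. Throughout I would abbreviate $\tau_k:=t_0/\ep_k$, $h_k:=g{\cdot}\tau_k$, $y_k:=y_{\ep_k}(\tau_k)$ and $x^\ast:=x_0(t_0)$, noting that $\ep_k\tau_k=t_0$, so $x_0(\ep_k\tau_k)=x^\ast\in K_0$, and that $\tau_k\to\infty$. Assertion (i) is then immediate from Theorem~\ref{teor:main}: given $\delta>0$, that theorem furnishes $T>0$ and $k_0$ with ${\rm dist}\big(y_{\ep_k}(\tau),A_{(g{\cdot}\tau,x_0(\ep_k\tau))}[\delta]\big)\leq\delta$ for all $k\geq k_0$ and $\tau\in[T,t_0/\ep_k]$; since $\tau_k\to\infty$, for $k$ large we may evaluate at $\tau=\tau_k$, obtaining ${\rm dist}\big(y_k,A_{(h_k,x^\ast)}[\delta]\big)\leq\delta$, so $\delta_k\leq\delta$ eventually. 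As $\delta>0$ is arbitrary and $\delta_k\geq0$, this gives $\lim_k\delta_k=0$.

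For (ii) the plan is to show the infimum defining $\delta_k$ is attained, i.e.\ ${\rm dist}\big(y_k,A_{(h_k,x^\ast)}[\delta_k]\big)\leq\delta_k$; together with (i) this is the claim. Two facts about the inflated fibers \eqref{eq:inflated} are needed, with $(h,x)\in\mathcal{H}\times K_0$ fixed and $A[\delta]:=A_{(h,x)}[\delta]$. First, $\delta\mapsto A[\delta]$ is nondecreasing, hence $\delta\mapsto{\rm dist}(y,A[\delta])$ is nonincreasing, so the set of $\delta>0$ with ${\rm dist}\big(y_k,A_{(h_k,x^\ast)}[\delta]\big)\leq\delta$ is upward closed and therefore contains $(\delta_k,\infty)$. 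Second — the key point — $\bigcap_{\delta>\delta_0}A[\delta]=A[\delta_0]$ for every $\delta_0\geq0$: $\supseteq$ is clear, and for $\subseteq$ one takes $z$ in the intersection, chooses $(\tilde h_j,\tilde x_j)\in\mathcal{H}\times K_0$ with ${\rm d}\big((\tilde h_j,\tilde x_j),(h,x)\big)\leq\delta_0+1/j$ and $z\in A_{(\tilde h_j,\tilde x_j)}$, extracts a convergent subsequence by compactness of $\mathcal{H}\times K_0$, and invokes closedness of $\A$. From this identity and compactness of the fibers $A[\delta]$ one obtains that $\delta\mapsto{\rm dist}(y,A[\delta])$ is left-continuous (take $\delta_j\downarrow\delta_0$ and minimizers $z_j\in A[\delta_j]$; any limit point of $(z_j)$ lies in every $A[\delta_i]$, hence in $\bigcap_iA[\delta_i]=A[\delta_0]$). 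Letting $\delta\downarrow\delta_k$ in ${\rm dist}\big(y_k,A_{(h_k,x^\ast)}[\delta]\big)\leq\delta$ gives ${\rm dist}\big(y_k,A_{(h_k,x^\ast)}[\delta_k]\big)\leq\delta_k$, which with (i) proves (ii).

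For (iii), assume $h_k\to h_0$ in $(\mathcal{H},d)$. Since $\tau_k\to\infty$, $h_0$ lies in the omega-limit set, so $h_0\in\Om(g)$. Using (ii), for each $k$ I would pick $z_k\in A_{(h_k,x^\ast)}[\delta_k]$ realizing the distance, so $|y_k-z_k|\to0$, together with $(\tilde h_k,\tilde x_k)\in\mathcal{H}\times K_0$ such that ${\rm d}\big((\tilde h_k,\tilde x_k),(h_k,x^\ast)\big)\leq\delta_k$ and $(\tilde h_k,\tilde x_k,z_k)\in\A$; since $\delta_k\to0$ (by (i)) and $h_k\to h_0$, this forces $(\tilde h_k,\tilde x_k)\to(h_0,x^\ast)$. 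All $z_k$, hence all $y_k$, lie in a fixed ball of $\R^m$ (the fibers of $\A$ are uniformly bounded), so it suffices to show that any subsequential limit $y_\infty$ of $(y_k)$ lies in $A_{(h_0,x^\ast)}$: along a further subsequence $z_{k_j}\to z_\infty=y_\infty$ and $(\tilde h_{k_j},\tilde x_{k_j},z_{k_j})\in\A$ converges to $(h_0,x^\ast,z_\infty)$, which belongs to $\A$ by closedness, so $y_\infty\in A_{(h_0,x^\ast)}$.

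The step I expect to be the main obstacle is the left-continuity of $\delta\mapsto{\rm dist}\big(y,A_{(h,x)}[\delta]\big)$ used in (ii) — equivalently the set identity $\bigcap_{\delta>\delta_0}A_{(h,x)}[\delta]=A_{(h,x)}[\delta_0]$ — since this is the only place where one genuinely exploits compactness of $\mathcal{H}\times K_0$ together with closedness of $\A$. The remaining work is lighter: (i) is a direct specialization of Theorem~\ref{teor:main}, and (iii) is a standard diagonal compactness argument using that $\A$ is closed and that $\delta_k\to0$ makes the inflation radii vanish.
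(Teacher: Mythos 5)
Your proof is correct, and in essence it follows the same route as the paper's. Two remarks on where you diverge in emphasis. For part (ii) the paper simply writes that it follows from (i) together with the definition of $\delta_k$; you are right that this is not immediate, since ${\rm dist}(y_k,A[\delta])\le\delta$ for $\delta>\delta_k$ does not, by monotonicity, transfer to $\delta=\delta_k$. Your argument that the infimum is attained — i.e.\ that $\bigcap_{\delta>\delta_0}A_{(h,x)}[\delta]=A_{(h,x)}[\delta_0]$ by compactness of $\mathcal{H}\times K_0$ and closedness of $\A$, hence $\delta\mapsto{\rm dist}(y,A_{(h,x)}[\delta])$ is right-continuous (you call it left-continuous; the function is nonincreasing and the limit is taken from the right, but the content is correct) — is exactly the missing detail, and it is a useful addition. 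For part (iii) the paper reaches the same conclusion via the triangle inequality ${\rm dist}(y_k,A_{(h_0,x^\ast)})\le{\rm dist}(y_k,A_{(h_k,x^\ast)}[\delta_k])+{\rm dist}(A_{(h_k,x^\ast)}[\delta_k],A_{(h_0,x^\ast)})$ together with the known upper semicontinuity of the section map $(h,x)\mapsto A_{(h,x)}$, arguing by contradiction for the second summand; you instead re-derive that upper semicontinuity inline by passing to subsequences and invoking closedness of $\A$. Both are standard and buy the same thing; yours is marginally more self-contained, the paper's is shorter by citing the semicontinuity result.
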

\begin{proof}
Item (i) follows immediately from~\eqref{eq:teorema} in Theorem~\ref{teor:main}~(ii) and the definition of $\delta_k$. Item (i) together with the definition of  $\delta_k$ imply (ii). For (iii), note, first of all, that since $\mathcal{H}$ is compact, there always exists a convergent subsequence of $\{g{\cdot}(t_0/\ep_k)\}$.  For simplicity,  assume that $g{\cdot}(t_0/\ep_k)\to h_0$  as $k\to\infty$.
By the triangle inequality, ${\rm dist}\big(y_{\ep_k}(t_0/\ep_k),A_{(h_0,x_0(t_0))}\big) \leq {\rm dist}\big(y_{\ep_k}(t_0/\ep_k),A_{(g{\cdot}(t_0/\ep_k),x_0(t_0))}[\delta_k]\big) +
{\rm dist}\big(A_{(g{\cdot}(t_0/\ep_k),x_0(t_0))}[\delta_k],A_{(h_0,x_0(t_0))}\big) $. Since the first term in the sum goes to $0$ by (ii), it suffices to see that the second term converges to $0$ too. Argue by contradiction and assume that there exists a $\rho>0$ and there exists a subsequence  $\{k_i\}$ of $\{k\}$ which we still denote by $\{k\}$ such that  ${\rm dist}\big(A_{(g{\cdot}(t_0/\ep_k),x_0(t_0))}[\delta_k],A_{(h_0,x_0(t_0))}\big)\geq \rho$ for all $k$. This means that for each $k$ there is $y_k\in A_{(g{\cdot}(t_0/\ep_k),x_0(t_0))}[\delta_k]$ with ${\rm dist}\big(y_k,A_{(h_0,x_0(t_0))}\big)\geq \rho/2$. The fact that $y_k\in A_{(g{\cdot}(t_0/\ep_k),x_0(t_0))}[\delta_k]$ implies that for each $k$ there exists a pair $(h_k,x_k)\in \mathcal{H}\times K_0$ with $d(g{\cdot}(t_0/\ep_k),h_k)\leq \delta_k$ and $|x_0(t_0)-x_k|\leq \delta_k$ such that $y_k\in A_{(h_k,x_k)}$. Then, by (i),   $(h_k,x_k)\to (h_0,x_0(t_0))$ as $k\to\infty$. By the upper semi-continuity of the setvalued map $(h,x)\to A_{(h,x)}$ (see, e.g.,~\cite[Theorem~3.34]{klra}) it follows that ${\rm dist}\big(A_{(h_k,x_k)},A_{(h_0,x_0(t_0))}\big)\to 0$ as $k\to \infty$. However this contradicts that  $y_k\in A_{(h_k,x_k)}$ and ${\rm dist}\big(y_k,A_{(h_0,x_0(t_0))}\big)\geq \rho/2>0$ for all $k$. The proof is finished.
\end{proof}
Once we have written the proof of Theorem~\ref{teor:main}, we get the next theorem almost for free. Requiring Assumption~\ref{assu:cont A^x} on the continuity of the map $x\mapsto \A^x$, which is equivalent to the  equi-attraction of the family  $\{\A^x\mid x\in K_0\}$ (see Theorem~\ref{teor:equivalence}), we get a more accurate description of the sets in $\R^m$ which the fast variables approach, in terms of the parametrically inflated pullback attractors of  $\pi^x$ given in~\eqref{eq:inflated 2}.
\begin{thm}\label{teor:main 2}
Under Assumptions~{\rm \ref{asu:1}} and~{\rm\ref{asu:2}}, for each $\ep>0$  let $(x_\ep(t),y_\ep(t))$ be the solution of~\eqref{eq:slowfastent}-\eqref{eq:initial values}. Then, given $t_0>0$, for $\ep$ small enough, $(x_\ep(t),y_\ep(t))$ can be extended to $t\in [0,t_0]$. Furthermore:
\begin{itemize}
\item[\rm{(i)}]  Every sequence $\ep_j\to 0$ has a subsequence, say $\ep_k\downarrow 0$, such that $x_{\ep_k}(t)$ converges uniformly for $t\in [0,t_0]$ to a solution of the differential inclusion~\eqref{eq:diff incl}.
Let the limit be $x_0(t)$, for $t\in [0,t_0]$.
\item[\rm{(ii)}] For the fast variables in the fast timescale $y_{\ep_k}(\tau)$, the behaviour is as follows. For a fixed small $r>0$, let $K_0$ be the compact tubular set defined in~\eqref{eq:K0},
let $\A^x\subset \mathcal{H} \times \R^m$  be the global attractor  for the skew-product semiflow~\eqref{eq:sk-pr x} for each fixed $x\in K_0$ and suppose that Assumption~{\rm\ref{assu:cont A^x}} holds. Then, given $\delta >0$, there exist $T=T(\delta,y_0,\{\A^x\}_{x\in K_0})>0$ and $k_0=k_0(\delta,T)>0$ with $2T<t_0/\ep_{k_0}$ such that for every $k\geq k_0$,
\begin{equation*}
 {\rm dist}\big(y_{\ep_k}(\tau),A_{g{\cdot}\tau}^{x_0(\ep_k\tau)}[\delta]\big)\leq  \delta \quad \hbox{for all }\; \tau\in [T,t_0/\ep_k]\,.
\end{equation*}
\end{itemize}
\end{thm}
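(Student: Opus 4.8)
The plan is to transcribe the proof of Theorem~\ref{teor:main} essentially verbatim, the sole structural change being that the uniform attraction towards the inflated pullback attractor of the layered semiflow $\pi$, i.e.\ relation~\eqref{eq:unif atract} of Theorem~\ref{teor:param inflated pullb att}, is replaced by the finer relation~\eqref{eq:unif atract 2}; this is the only place where Assumption~\ref{assu:cont A^x} (equivalently, by Theorem~\ref{teor:equivalence}, equi-attraction of $\{\A^x\}_{x\in K_0}$) is invoked. Statement~(i) is identical to Theorem~\ref{teor:main}~(i): it reproduces Artstein's Theorem~5.3 in~\cite{paper:ZA99}, applicable here by Remark~\ref{rmk:atractor uniforme}~(1), and the identification of the measures in the differential inclusion~\eqref{eq:diff incl} is unchanged.

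For~(ii), I would fix $0<\delta<1$ and, exactly as in the proof of Theorem~\ref{teor:main}, choose $\rho_0>0$ with $\bigcup_{(h,x)\in\mathcal{H}\times K_0}A_{(h,x)}\subset\overline B(0,\rho_0)$ and set $D:=\overline B(0,\rho)$ for a fixed $\rho\geq\max(|y_0|,\rho_0+1)$. The observation I would lean on is that, since $A_h^x=A_{(h,x)}$ for $(h,x)\in\mathcal{H}\times K_0$, the fibers of the inflated pullback attractor of $\pi^x$ from~\eqref{eq:inflated 2} satisfy $A_h^x[\delta]=\bigcup_{d(\tilde h,h)\leq\delta}A_{\tilde h}^x\subset\bigcup_{(\tilde h,\tilde x)\in\mathcal{H}\times K_0}A_{(\tilde h,\tilde x)}\subset\overline B(0,\rho_0)$ for every $(h,x)\in\mathcal{H}\times K_0$, so the step of the iteration whereby ${\rm dist}(y_{\ep_k}(iT),\,\cdot\,[\delta])\leq\delta$ forces $y_{\ep_k}(iT)\in D$ keeps working. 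Applying~\eqref{eq:unif atract 2} to this bounded set $D$ and to $\delta$ would produce $T=T(D,\delta)>0$ with ${\rm dist}(y(\tau,h,x,D),A_{h{\cdot}\tau}^x[\delta])<\delta/2$ for all $(h,x)\in\mathcal{H}\times K_0$ and $\tau\geq T$; specializing $h=g{\cdot}\gamma$ and $x=x_0(\ep_k s)\in K_0$ gives the replacement for~\eqref{eq:dist tau mayor T}, namely ${\rm dist}(y(\tau,g{\cdot}\gamma,x_0(\ep_k s),D),A_{g{\cdot}(\gamma+\tau)}^{x_0(\ep_k s)}[\delta])<\delta/2$ for every $\gamma\in\R$, $k\geq1$ and $s,\tau\in[0,t_0/\ep_k]$ with $\tau\geq T$.

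From here the choice of $k_0$ would be the one in Theorem~\ref{teor:main}, word for word: I would introduce the compact set $K:=y([0,2T]\times\mathcal{H}\times K_0\times D)$, the continuous extension $G$ of $g$ over the hull, a relatively compact open $U\supset K$, the Lipschitz constant $L=L(K_0,\overline U)$, pick $\sigma>0$ with $(\sigma/L)(e^{L2T}-1)\leq\delta/2$, and select $k_0$ with $2T<t_0/\ep_{k_0}$, $x_{\ep_k}([0,t_0])\subset K_0$ and~\eqref{eq:G} valid for $k\geq k_0$. The finite iteration over the intervals $[iT,(i+1)T]$ would proceed exactly as before: at step $i$, with $y_0^{ik}=y_{\ep_k}(iT)\in D$, Lemma~\ref{lem:comparacion} on $[iT,(i+2)T]$ yields $|y_{\ep_k}(\tau)-y(\tau-iT,g{\cdot}(iT),x_0(\ep_k s),y_0^{ik})|\leq\delta/2$; for $\tau\geq(i+1)T$ the comparison solution is within $\delta/2$ of $A_{g{\cdot}\tau}^{x_0(\ep_k s)}[\delta]$ by the displayed estimate above; evaluating at $\tau=s$ and applying the triangle inequality gives ${\rm dist}(y_{\ep_k}(s),A_{g{\cdot}s}^{x_0(\ep_k s)}[\delta])\leq\delta$ on $[(i+1)T,(i+2)T]\cap[0,t_0/\ep_k]$; and $A_{g{\cdot}s}^{x_0(\ep_k s)}[\delta]\subset\overline B(0,\rho_0)$ places $y_{\ep_k}((i+1)T)$ back in $D$. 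Finitely many iterations cover $[T,t_0/\ep_k]$.

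I do not expect any genuinely new difficulty; the only point that should be watched is the one flagged in the second paragraph, namely that inflating only in the base variable $h$ while keeping $x$ frozen still confines the fibers $A_h^x[\delta]$ to the fixed ball $\overline B(0,\rho_0)$, so that the re-entry of $y_{\ep_k}(iT)$ into the compact set $D$ survives the passage from $A_{(h,x)}[\delta]$ to $A_h^x[\delta]$. Beyond that, the argument is a literal transcription of the proof of Theorem~\ref{teor:main} with~\eqref{eq:unif atract 2} used in place of~\eqref{eq:unif atract}, which is precisely why the result follows \emph{almost for free}.
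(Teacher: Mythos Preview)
Your proposal is correct and follows the paper's own approach exactly: both argue that (i) is literally Theorem~\ref{teor:main}~(i), and that for (ii) one repeats the iterative proof of Theorem~\ref{teor:main}~(ii) verbatim, the only substitution being~\eqref{eq:unif atract 2} in place of~\eqref{eq:unif atract}. Your explicit remark that $A_h^x[\delta]\subset\overline B(0,\rho_0)$ (so the re-entry $y_{\ep_k}(iT)\in D$ survives) is a detail the paper leaves implicit but is indeed the one point to watch.
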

\begin{proof}
The statement in (i) is exactly the same as in Theorem~\ref{teor:main}. For the proof of (ii) we refer the reader to the proof of  Theorem~\ref{teor:main}~(ii) and we only indicate the necessary changes in the present situation.

Given $0<\delta<1$, and the compact set  $D:=\overline B(0,\rho)\subset \R^m$ built in the cited proof,  it follows from~\eqref{eq:unif atract 2} in Theorem~\ref{teor:param inflated pullb att} that there is a $T=T(D,\delta)$ such that for every  $(h,x)\in\mathcal{H}\times K_0$,
\begin{equation*}
   {\rm dist}\big(y(\tau,h,x,D),A_{h{\cdot}\tau}^x[\delta]\big)<  \frac{\delta}{2}
\end{equation*}
for every $\tau\geq T$. This time we note  that in particular,
\begin{equation*}
   {\rm dist}\big(y(\tau,g{\cdot}\gamma,{x_0(\ep_k s)},D),A_{g{\cdot}(\gamma+\tau)}^{x_0(\ep_k s)}[\delta]\big) < \frac{\delta}{2}
\end{equation*}
 for every $\gamma\in \R$, $k\geq 1$ and every $s,\tau\in [0,t_0/\ep_k]$, provided that $\tau\geq T$.
From here the proof goes smoothly following the same arguments as in the proof mentioned above.  The proof is finished.
\end{proof}

As a direct corollary of this theorem, we have the following result in the line of Theorem~\ref{teor:valor final}.
\begin{cor}\label{coro 1}
In the same situation as in Theorem~{\rm \ref{teor:main 2}}, under Assumptions~{\rm \ref{asu:1}} and {\rm\ref{asu:2}},  assume that $x_{\ep_k}(t)$ converges  to $x_0(t)$ uniformly on $[0,t_0]$, let the compact set $K_0$ and the global attractors $\{\A^x\mid x\in K_0\}$ be the ones defined therein, and suppose that Assumption~{\rm\ref{assu:cont A^x}} holds.  For each integer $k\geq 1$, let
\begin{equation*}
 \delta_k:=\inf\big\{ \delta>0 \mid  {\rm dist}\big(y_{\ep_k}(t_0/\ep_k),A_{g{\cdot}(t_0/\ep_k)}^{x_0(t_0)}[\delta]\big)\leq  \delta  \big\}\,.
\end{equation*}
 Then:
 \begin{itemize}
   \item[\rm{(i)}] $\des\lim_{k\to \infty} \delta_k =0$;
   \item[\rm{(ii)}] $\des\lim_{k\to \infty}  {\rm dist}\big(y_{\ep_k}(t_0/\ep_k),A_{g{\cdot}(t_0/\ep_k)}^{x_0(t_0)}[\delta_k]\big) =0$.
 \end{itemize}
\end{cor}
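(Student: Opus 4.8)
The plan is to derive Corollary~\ref{coro 1} directly from Theorem~\ref{teor:main 2}~(ii), exactly mirroring how Theorem~\ref{teor:valor final} was obtained from Theorem~\ref{teor:main}~(ii). The whole content is an evaluation of the tracking estimate at the endpoint $\tau=t_0/\ep_k$ together with a monotonicity/infimum bookkeeping argument, so there is no real new difficulty; the only thing to be careful about is that the time $T$ in Theorem~\ref{teor:main 2}~(ii) depends on the prescribed $\delta$, so one cannot simply "take $\delta\to0$" uniformly — instead one fixes each $\delta$, extracts the conclusion at the final time, and then lets $\delta$ shrink.

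First I would fix an arbitrary $\delta$ with $0<\delta<1$. By Theorem~\ref{teor:main 2}~(ii) (applied with this $\delta$, to the sequence $\ep_k\downarrow0$ along which $x_{\ep_k}\to x_0$ uniformly, with $K_0$ and $\{\A^x\}_{x\in K_0}$ as in the statement, using Assumption~\ref{assu:cont A^x}), there are $T=T(\delta,y_0,\{\A^x\}_{x\in K_0})>0$ and $k_0=k_0(\delta,T)$ with $2T<t_0/\ep_{k_0}$ such that for all $k\ge k_0$,
\[
{\rm dist}\big(y_{\ep_k}(\tau),A_{g{\cdot}\tau}^{x_0(\ep_k\tau)}[\delta]\big)\le\delta\qquad\text{for all }\tau\in[T,t_0/\ep_k].
\]
Evaluating this at $\tau=t_0/\ep_k$ (which lies in $[T,t_0/\ep_k]$ since $t_0/\ep_k\ge t_0/\ep_{k_0}>2T>T$) gives, for every $k\ge k_0$,
\[
{\rm dist}\big(y_{\ep_k}(t_0/\ep_k),A_{g{\cdot}(t_0/\ep_k)}^{x_0(t_0)}[\delta]\big)\le\delta,
\]
where we used $\ep_k\,(t_0/\ep_k)=t_0$. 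By the very definition of $\delta_k$ as an infimum, this forces $\delta_k\le\delta$ for all $k\ge k_0$. Since $\delta\in(0,1)$ was arbitrary, $\limsup_{k\to\infty}\delta_k\le\delta$ for every such $\delta$, hence $\delta_k\to0$, which is~(i).

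For~(ii), the cleanest route is to observe that the map $\delta\mapsto A_{g{\cdot}(t_0/\ep_k)}^{x_0(t_0)}[\delta]$ is nondecreasing in $\delta$ (larger inflation radius, larger union in~\eqref{eq:inflated 2}), so that $\delta\mapsto {\rm dist}\big(y_{\ep_k}(t_0/\ep_k),A_{g{\cdot}(t_0/\ep_k)}^{x_0(t_0)}[\delta]\big)$ is nonincreasing, and moreover the inflated fibers are compact (cf.~\cite[Lemma~3.39]{klra}) so the defining inequality $ {\rm dist}(\cdot,\cdot[\delta])\le\delta$ is attained at $\delta=\delta_k$ by a limiting argument; hence
\[
{\rm dist}\big(y_{\ep_k}(t_0/\ep_k),A_{g{\cdot}(t_0/\ep_k)}^{x_0(t_0)}[\delta_k]\big)\le\delta_k.
\]
Combining this with~(i) yields $\lim_{k\to\infty}{\rm dist}\big(y_{\ep_k}(t_0/\ep_k),A_{g{\cdot}(t_0/\ep_k)}^{x_0(t_0)}[\delta_k]\big)=0$, proving~(ii). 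If one prefers to avoid the attainment-at-$\delta_k$ subtlety, it is equally acceptable to prove~(ii) via a short $2\delta$-argument: given $\eta>0$ pick $\delta<\eta$ as above and $k_0$ as above; then for $k\ge k_0$ both $\delta_k\le\delta<\eta$ and (by the previous paragraph with $2\delta$ in place of $\delta$, say) the distance to $A^{x_0(t_0)}_{g{\cdot}(t_0/\ep_k)}[\delta_k]$ is dominated by a quantity tending to $0$ with $\delta$. The only mild obstacle is this interplay between the infimum $\delta_k$ and the inflation radius appearing on both sides of the tracking inequality, and it is handled purely by the monotonicity of inflated fibers in $\delta$ together with their compactness; everything else is a direct specialization of Theorem~\ref{teor:main 2}~(ii) to the final instant.
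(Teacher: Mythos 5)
Your proof is correct and follows exactly the route the paper intends: the paper itself offers no separate proof of this corollary, only the remark that it is ``a direct corollary'' of Theorem~\ref{teor:main 2} in the line of Theorem~\ref{teor:valor final}, and the paper's proof of Theorem~\ref{teor:valor final}~(i)--(ii) is precisely the one-line ``evaluate at $\tau=t_0/\ep_k$, use the definition of $\delta_k$'' argument you spell out. Your item~(i) is exactly right. For item~(ii), you correctly identify that the crux is the attainment of the defining inequality at $\delta=\delta_k$, i.e.\ that ${\rm dist}\big(y_{\ep_k}(t_0/\ep_k),A^{x_0(t_0)}_{g{\cdot}(t_0/\ep_k)}[\delta_k]\big)\le\delta_k$; the one small point worth sharpening is that this attainment does not follow from compactness of the inflated fibers alone. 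What is really used is that $\bigcap_{\delta>\delta_k} A^{x_0(t_0)}_{h}[\delta]=A^{x_0(t_0)}_{h}[\delta_k]$, and this requires the upper semi-continuity of the setvalued map $\tilde h\mapsto A^{x_0(t_0)}_{\tilde h}$ (as in \cite[Theorem~3.34]{klra}, cited elsewhere in the paper) together with the compactness of $\mathcal H$: one extracts a convergent subsequence of the $\tilde h$'s witnessing membership and passes to the limit. With that ingredient stated, the right-continuity of $\delta\mapsto{\rm dist}\big(y_{\ep_k}(t_0/\ep_k),A^{x_0(t_0)}_{h}[\delta]\big)$ follows, the attainment holds, and your squeeze argument closes~(ii). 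Everything else matches the paper's intent.
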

Before we proceed, let us clarify that in general we cannot disregard the inflated pullback attractors, that is, in general  $[\delta]$ cannot be omitted in the previous statements. A good reference to check all the details of the following example is the paper by Langa et al.~\cite{paper:laos}.
\begin{eje}
Let us consider for each $\ep>0$ a slow-fast system in the fast scale of time $\tau\in [0,t_0/\ep]$,
\begin{equation*}
\left\{\begin{array}{l} x'  = \ep\,f(x,y)\,,
  \\
 y' = a_0(\tau)\,y + k(y)\,,
\end{array}\right.
\end{equation*}
where $f(x,y)$ satisfies the conditions imposed in Section~\ref{secSlowfast} and $g(x,y,\tau)=a_0(\tau)\,y + k(y)$ is a real map with no dependence  on $x$. We assume that $a_0:\R\to \R$ is an almost periodic map with mean value $0$, that is, $0=\lim_{\tau\to \infty}(1/\tau)\int_0^\tau a_0(s)\,ds$, and unbounded integral, $\sup\big\{\big|\int_0^\tau   a_0(s)\,ds\big|\mid  \tau\in\R\big\}=\infty$ (note that this excludes the periodic case). We build the hull of $a_0$, $\mathcal{H}\subset C(\R)$, with the shift flow denoted by $h{\cdot}\tau$ and consider the continuous map $a:\mathcal{H}\to \R$ defined by $h\mapsto h(0)$, in such a way that $a(h{\cdot}\tau)=h(\tau)$ for $\tau\in \R$ and  in particular for $h=a_0$ we recover the initial almost periodic map $a_0$. Then, by Birkhoff's ergodic theorem, we have that $a\in C_0(\mathcal{H})=\left\{r\in C(\mathcal{H})\mid \int_{\mathcal{H}} r\,d\nu=0\right\}$ where $\nu$ is the only invariant measure for the flow in the hull. More precisely, $a\in \mathcal{U}(\mathcal{H})$, which is the subset of $C_0(\mathcal{H})$ of maps with unbounded integral.

We also assume that the scalar family of equations $y' = a(h{\cdot}\tau)\,y + k(y)$, $h\in\mathcal{H}$ is linear-dissipative, that is,  the nonlinear term $k(y)$, which for simplicity is taken to be an odd function, satisfies that $k\in C^1(\R)$, $k(y)=0$ if $|y|\leq r_0$ for a certain $r_0>0$, $y\, k(y)\leq 0$ for all $y\in\R$ and $\lim_{y\to \infty} k(y)/y=-\infty$.

With the former assumptions,  $g$ is an admissible map, locally Lipschitz in $y$ and the skew-product semiflow defined on $\mathcal{H}\times \R$ by the solutions of the family of scalar ODEs $y'= a(h{\cdot}\tau)\,y + k(y)$, $h\in\mathcal{H}$ is globally defined. Besides, it admits a global attractor $\A$ which can be written as $\A = \bigcup_{h\in \mathcal{H}} \{h\}\times A_h \subseteq\bigcup_{h\in \mathcal{H}} \{h\}\times [-b(h),b(h)] $ for $b(h)=\sup A_h$ for each $h\in \mathcal{H}$. This map $b$ plays a major role in what follows. In fact, the hull splits into two nonempty invariant subsets $\mathcal{H}=\mathcal{H}_{\rm s}\cup \mathcal{H}_{\rm f}$ as follows:
\begin{align*}
  \mbox{a residual set } \mathcal{H}_{\rm s} & := \{h\in \mathcal{H}\mid b(h)=0\}=\{h\in \mathcal{H}\mid A_h=\{0\}\} \,,\\
  \mbox{and a dense set of first category } \mathcal{H}_{\rm f} & := \mathcal{H}\setminus \mathcal{H}_{\rm s}=\{h\in \mathcal{H}\mid b(h)>0\}\,.
\end{align*}

These two sets exclusively depend on the linear family $y'= a(h{\cdot}\tau)\,y $, $h\in\mathcal{H}$. Namely, for the associated smooth cocycle $c(\tau,h)=\exp \int_0^\tau a(h{\cdot}s)\,ds$, it holds that $h\in \mathcal{H}_{\rm f}$ if and only if $\sup_{\tau\leq 0}c(\tau,h)<\infty$.
We now introduce a third relevant set. A map $h\in \mathcal{H}$ is said to be (Poincar\'{e}) {\em recurrent\/} at $\infty$ if  there exists a sequence $\{\tau_k\}_k\uparrow \infty$ such that $\lim_{k\to\infty} c(\tau_k,h)=1$. Then, the set
\[
\mathcal{H}_{\rm r}^+ :=\big\{h\in \mathcal{H}\mid h{\cdot}\tau   \;\hbox{is recurrent at}\;\infty\;\hbox{for all}\; \tau\in\R \big\}
\]
is invariant and of full measure.
The thing is that   for all  $h\in \mathcal{H}_{\rm{s}}^*:=\mathcal{H}_{\rm{s}}\cap \mathcal{H}_{\rm{r}}^+$ the corresponding process has no forward attractor (see~\cite[Proposition 4.10]{paper:laos}), that is, for all  $h\in \mathcal{H}_{\rm{s}}^*$ it is not true that
\begin{equation*}
\lim_{\tau\to \infty}{\rm dist}\big(y(\tau,h,D),A_{h{\cdot}\tau}\big)=  0   \quad \hbox{for each bounded set}\; D\subset  \R\,.
\end{equation*}
Note that, as stated before, if $h\in \mathcal{H}_{\rm{s}}^*$,  $A_{h{\cdot}\tau}$ reduces to $\{0\}$ for all $\tau\geq 0$ by the invariance of the set $\mathcal{H}_{\rm{s}}^*$.
In fact, for each $h\in \mathcal{H}_{\rm{s}}^*$ and every $y_0\neq 0$,  the limit $\lim_{\tau\to \infty}|y(\tau,h,y_0)|$ is not null, for if the limit were null, by the linear-dissipative structure of the equation, then $y(\tau,h,y_0)$ would be a solution of the linear equation $y'= a(h{\cdot}\tau)\,y $ from one time on, but this is not possible because $h\in \mathcal{H}_{\rm{r}}^+$.

Then, if $a_0$ itself is in $\mathcal{H}_{\rm{s}}^*$ we just consider the initial slow-fast pair. In general we fix an $h\in \mathcal{H}_{\rm{s}}^*$ and a value $y_0\neq 0$ so that the limit $\lim_{\tau\to \infty}|y(\tau,h,y_0)|$ is not null and we look at the slow-fast equations
\begin{equation}\label{eq:ejemplo}
\left\{\begin{array}{l} x'  = \ep\,f(x,y)\,,
  \\
 y' = h(\tau)\,y + k(y)\,.
\end{array}\right.
\end{equation}
Note that, given initial conditions $(x_0,y_0)$, the solution for each $\ep>0$ is given by $(x_\ep(\tau),y_\ep(\tau))$ where $y_\ep(\tau)$ does not depend on $\ep$, as it is just the solution $y(\tau,h,y_0)$ of the scalar initial value problem
\begin{equation*}
\left\{\begin{array}{l} y' = h(\tau)\,y + k(y)\,,
  \\
 y(0)=y_0\,.
\end{array}\right.
\end{equation*}
We can apply the previous results Theorems~\ref{teor:main} and~\ref{teor:main 2} to the problem in~\eqref{eq:ejemplo} noting that there is no need to consider the compact set $K_0$ because the fast variable does not depend on the parameter $x$. Then, given a sequence $\ep_j\to 0$ there is a subsequence $\ep_k\downarrow 0$ such that,   if the inflated pullback attractor $A_{h{\cdot}\tau}[\delta]$ were dispensable in~\eqref{eq:teorema}, we would have that, for each $\delta>0$ there exists a $T>0$ and an integer $k_0$ such that for all $k\geq k_0$,  $|y_{\ep_k}(\tau)|=|y(\tau,h,y_0)|< \delta$ for all $\tau\in [T,t_0/\ep_k]$, but this cannot happen if $\lim_{\tau\to \infty}|y(\tau,h,y_0)|$ is not null. Therefore, one cannot do without $A_{h{\cdot}\tau}[\delta]$ in this example.

Let us mention that specific examples of almost periodic maps $a_0$ satisfying $a_0\in \mathcal{H}_{\rm{s}}$ or $a_0\in \mathcal{H}_{\rm{f}}$, among other conditions,  have been reviewed  in~\cite[Example~4.15]{paper:laos}. To finish, it is worth noting that for a minimal uniquely ergodic and aperiodic flow $(\mathcal{H},{\cdot})$,   the fact that
$\nu(\mathcal{H}_{\rm{s}})=1$ is generic in $\mathcal{U}(\mathcal{H})$. Whenever $a\in \mathcal{U}(\mathcal{H})$ and  $\nu(\mathcal{H}_{\rm{s}})=1$, then the set of maps $\mathcal{H}_{\rm{s}}^*=\mathcal{H}_{\rm{s}}\cap \mathcal{H}_{\rm{r}}^+$ providing examples of scalar equations which do not have forward attraction is a residual invariant set of full measure. On the other hand, if for $a\in\mathcal{U}(\mathcal{H})$ it is $\nu(\mathcal{H}_{\rm{f}})=1$, then the global attractor is fiber-chaotic in measure in the sense of Li-Yorke. More examples of nonautonomous linear-dissipative or purely dissipative scalar ODEs exhibiting characteristics of chaotic dynamics in $\mathcal{H}\times \R$ can be found in the works~\cite{ObSa,CaNuOb}.
\end{eje}
Let us now take a final step  in  strengthening the required conditions in order to improve the delimitation of the approximation zone for the fast motion in $\R^m$ as $\ep\to 0$, making it easier to understand in applications to real problems. More precisely, in the next result we will add Assumption~\ref{assu:cont A_(h,x)} on the continuity of the section map $(h,x)\in \mathcal{H}\times K_0\mapsto A_{(h,x)}\subset \R^m$. Note that this map is well-known to be upper semi-continuous  (once more, see, e.g., Theorem~3.34 in~\cite{klra}), so that the real assumption is on the lower semi-continuity.
In this situation there is no need to consider the parametrically inflated pullback attractors, thanks to~\eqref{eq:unif forw} in Theorem~\ref{teor:param inflated pullb att}, and the result reads as follows. Once more, we include the statement in (i) for the sake of completeness.
\begin{thm}\label{teor:main 3}
Under Assumptions~{\rm \ref{asu:1}} and {\rm\ref{asu:2}}, for each $\ep>0$  let $(x_\ep(t),y_\ep(t))$ be the solution of~\eqref{eq:slowfastent}-\eqref{eq:initial values}. Then, given $t_0>0$, for $\ep$ small enough, $(x_\ep(t),y_\ep(t))$ can be extended to $t\in [0,t_0]$. Furthermore:
\begin{itemize}
\item[\rm{(i)}]  Every sequence $\ep_j\to 0$ has a subsequence, say $\ep_k\downarrow 0$, such that $x_{\ep_k}(t)$ converges uniformly for $t\in [0,t_0]$ to a solution of the differential inclusion~\eqref{eq:diff incl}.
Let the limit be $x_0(t)$, for $t\in [0,t_0]$.
\item[\rm{(ii)}] For the fast variables in the fast timescale $y_{\ep_k}(\tau)$, the behaviour is as follows. For a fixed small $r>0$, let $K_0$ be the compact tubular set in~\eqref{eq:K0}, let $\A\subset \mathcal{H} \times K_0 \times \R^m$ be the global attractor of the  skew-product semiflow~\eqref{eq:sk-pr} and suppose that Assumption~{\rm\ref{assu:cont A_(h,x)}} holds.  
Then, given $\delta >0$, there exist $T=T(\delta,y_0,\A)>0$ and $k_0=k_0(\delta,T)>0$ with $2T<t_0/\ep_{k_0}$ such that for every $k\geq k_0$,
\begin{equation*}
 {\rm dist}\big(y_{\ep_k}(\tau),A_{(g{\cdot}\tau,x_0(\ep_k\tau)}\big)\leq  \delta \quad \hbox{for all }\; \tau\in [T,t_0/\ep_k]\,.
\end{equation*}
\item[\rm{(iii)}] $ \des\lim_{k\to\infty} {\rm dist}\big(y_{\ep_k}(t_0/\ep_k),A_{(g{\cdot}(t_0/\ep_k),x_0(t_0))}\big)=0\,.$

\item[\rm{(iv)}] In the particular case when the global attractor $\A$ is a copy of the base, that is, when there is a continuous map $\eta(h,x)$ such that $A_{(h,x)}=\{\eta(h,x)\}$ for each $(h,x)\in\mathcal{H}\times K_0$,  given $\delta >0$, there exist $T=T(\delta,y_0,\A)>0$ and $k_0=k_0(\delta,T)>0$ with $2T<t_0/\ep_{k_0}$ such that for every $k\geq k_0$,
\begin{equation}\label{eq:teorema copy}
  \big|y_{\ep_k}(\tau)- \eta\big(g{\cdot}\tau,x_0(\ep_k\tau)\big)\big|\leq  \delta \quad \hbox{for all }\; \tau\in [T,t_0/\ep_k]\,.
\end{equation}
Besides, $\des\lim_{k\to\infty} \big|y_{\ep_k}(t_0/\ep_k) - \eta\big(g{\cdot}(t_0/\ep_k),x_0(t_0)\big)\big| =0$ and, if $g{\cdot}(t_0/\ep_k)\to h_0$ in $(\mathcal{H},d)$ as $k\to\infty$, then $h_0\in \Om(g)$ and $\des\lim_{k\to\infty} y_{\ep_k}(t_0/\ep_k) = \eta\big(h_0,x_0(t_0)\big)$.
\end{itemize}
\end{thm}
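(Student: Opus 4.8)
The plan is to transcribe the proof of Theorem~\ref{teor:main} almost word for word; the only structural change is that, since Assumption~\ref{assu:cont A_(h,x)} is now in force, one may invoke the \emph{uniform forward attraction}~\eqref{eq:unif forw} of Theorem~\ref{teor:param inflated pullb att} in place of the merely inflated attraction~\eqref{eq:unif atract}. Item~(i) needs nothing new: it is literally Theorem~\ref{teor:main}(i). Items~(iii) and~(iv) will come essentially for free once~(ii) is in hand.

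For item~(ii) I would keep the whole construction from the proof of Theorem~\ref{teor:main}(ii): the compact ball $D=\overline B(0,\rho)$ with $\rho\geq\max(|y_0|,\rho_0+1)$ and $\bigcup_{(h,x)\in\mathcal{H}\times K_0}A_{(h,x)}\subset\overline B(0,\rho_0)$; the compact set $K=y([0,2T]\times\mathcal{H}\times K_0\times D)$; the Lipschitz constant $L$ on a relatively compact neighbourhood $\overline U\supset K$; the constant $\sigma$ fixed via~\eqref{eq:sigma}; and the index $k_0$ fixed via~\eqref{eq:G}. The single change is that the attraction time $T=T(D,\delta)$ is produced from~\eqref{eq:unif forw} rather than from~\eqref{eq:unif atract}, so that ${\rm dist}\big(y(\tau,g{\cdot}\gamma,x_0(\ep_k s),D),A_{(g{\cdot}(\gamma+\tau),x_0(\ep_k s))}\big)<\delta/2$ for all $\gamma\in\R$, all $k$ and all $s,\tau\in[0,t_0/\ep_k]$ with $\tau\geq T$, the plain fiber replacing the inflated one. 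The iterative argument over the successive intervals $[T,2T]$, $[2T,3T]$, $\dots$ then runs exactly as before: on each such interval Lemma~\ref{lem:comparacion} keeps $y_{\ep_k}$ within $\delta/2$ of a genuine solution of the frozen equation with initial datum $y_{\ep_k}(iT)$, which stays in $K$, and the triangle inequality together with the displayed estimate yields ${\rm dist}\big(y_{\ep_k}(s),A_{(g{\cdot}s,x_0(\ep_k s))}\big)\leq\delta$.

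The one point to pause at is the restart of the induction. In Theorem~\ref{teor:main} one deduced $y_{\ep_k}(iT)\in D$ from ${\rm dist}(y_{\ep_k}(iT),A_{(g{\cdot}iT,x_0(\ep_k iT))}[\delta])\leq\delta$ and $A_{(h,x)}[\delta]\subset\overline B(0,\rho_0)$; here one instead uses ${\rm dist}(y_{\ep_k}(iT),A_{(g{\cdot}iT,x_0(\ep_k iT))})\leq\delta$ and $A_{(h,x)}\subset\overline B(0,\rho_0)$, so $|y_{\ep_k}(iT)|\leq\rho_0+\delta<\rho$ and hence $y_{\ep_k}(iT)\in D$. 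This is the only spot where dropping the inflation could conceivably matter, and it causes no trouble precisely because the fibers of the attractor are themselves uniformly bounded by $\rho_0$. I do not anticipate any other obstacle, since the whole analytic content is already contained in Theorem~\ref{teor:main} and in~\eqref{eq:unif forw}.

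Finally, item~(iii) follows by evaluating the conclusion of~(ii) at $\tau=t_0/\ep_k$, where $\ep_k\tau=t_0$ so the relevant fiber is $A_{(g{\cdot}(t_0/\ep_k),x_0(t_0))}$: for each $\delta>0$ there is $k_0$ with ${\rm dist}(y_{\ep_k}(t_0/\ep_k),A_{(g{\cdot}(t_0/\ep_k),x_0(t_0))})\leq\delta$ for $k\geq k_0$, and $\delta$ being arbitrary the limit is $0$. For item~(iv), when $A_{(h,x)}=\{\eta(h,x)\}$ the Hausdorff distance between two fibers equals $|\eta(h,x)-\eta(\tilde h,\tilde x)|$, so continuity of $\eta$ is exactly Assumption~\ref{assu:cont A_(h,x)} and items~(ii)--(iii) apply; since ${\rm dist}(y,\{\eta(h,x)\})=|y-\eta(h,x)|$, (ii) becomes~\eqref{eq:teorema copy} and (iii) becomes $|y_{\ep_k}(t_0/\ep_k)-\eta(g{\cdot}(t_0/\ep_k),x_0(t_0))|\to 0$. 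If moreover $g{\cdot}(t_0/\ep_k)\to h_0$, then $h_0\in\Om(g)$ because $t_0/\ep_k\to\infty$ and $\Om(g)$ is the set of subsequential limits of the forward translates of $g$; continuity of $\eta$ gives $\eta(g{\cdot}(t_0/\ep_k),x_0(t_0))\to\eta(h_0,x_0(t_0))$, and one more triangle inequality yields $y_{\ep_k}(t_0/\ep_k)\to\eta(h_0,x_0(t_0))$.
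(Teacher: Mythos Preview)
Your proposal is correct and takes essentially the same approach as the paper: the paper itself omits the proof of~(ii), simply remarking that one uses~\eqref{eq:unif forw} from Theorem~\ref{teor:param inflated pullb att} in place of~\eqref{eq:unif atract}, and then derives~(iii) by evaluating~(ii) at $\tau=t_0/\ep_k$ and~(iv) as the singleton-fiber specialization. Your write-up is in fact more detailed than the paper's, and your explicit check that the induction restart $y_{\ep_k}(iT)\in D$ still goes through with the plain (non-inflated) fibers is a useful observation that the paper leaves implicit.
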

\begin{proof}
We omit the proof of (ii). Let us just mention that~\eqref{eq:unif forw} in Theorem~\ref{teor:param inflated pullb att} is used. The fact that the limit in (iii) is null  follows from (ii) by evaluating at the final time  $t_0/\ep_k$.  As for (iv), note that the continuity condition $(h,x)\in \mathcal{H}\times K_0\mapsto A_{(h,x)}=\{\eta(h,x)\}\subset\R^m$ in Assumption~{\rm\ref{assu:cont A_(h,x)}} is satisfied, so that we have  rewritten the former conclusions in (ii)-(iii) in this particular situation. Note that, if $g{\cdot}(t_0/\ep_k)\to h_0$ (this happens at least for a subsequence), then there exists the limit $\lim_{k\to\infty} y_{\ep_k}(t_0/\ep_k) = \eta\big(h_0,x_0(t_0)\big)$. The proof is finished.
\end{proof}
\begin{rmk}
When Theorem~\ref{teor:main 3} applies, it follows that for $\ep>0$ small enough the fast motion $y_\ep(\tau)$ varies in the fast timescale in a way that it remains close to the fibers of the attractors $\A^{x_0(\ep\tau)}$ along the limit of the slow motion, along the orbit of $g$, that is, $A^{x_0(\ep\tau)}_{g{\cdot}\tau}$, for long intervals of time in $(0,t_0/\ep]$. This fact is implicit in classical references that deal with the particular case in which the layer equations of an autonomous slow-fast pair admit an $x$-parametric family of periodic solutions with asymptotical orbital stability. For instance, see Pontryagin and Rodygin~\cite{paper:PR}.
\end{rmk}
\subsection{Numerical simulations in a simple case}\label{sec:subsec numeric} In this subsection, we present some numerical examples to showcase the impact of the previous results in the understanding of the dynamics  of singularly perturbed nonautonomous differential equations.

Let us consider a slow-fast pair of ODEs in the fast scale of time, as in~\eqref{eq:slowfastentau}, with a nonautonomous variation in the fast motion, which initially does not depend on the slow motion $x$,
\begin{equation*}
\begin{cases}
x'=\ep\,f(x,y)\,,\\
y'=g(y,\tau)\,,\\
\end{cases}
\end{equation*}
where $\ep>0$ is a small parameter. It is assumed that $f:\R^n\times\R^m\to \R^n$ and $g:\R^m\times\R\to\R^m$ satisfy the conditions in Assumptions~\ref{asu:1} and~\ref{asu:2} (i) and (ii), meaning in this situation that the solutions of $y'=g(y,\tau)$ are uniformly ultimately bounded. Let $\mathcal{H}$ be the hull of $g$. Then, it follows from Proposition~\ref{prop:existe atractor} that there is a global attractor $\A_0=\bigcup_{h\in\mathcal{H}}\{h\}\times A_h\subset \mathcal{H}\times\R^m$ for the skew-product semiflow given by the solutions of the family of equations $y'=h(y,\tau)$ for  $h\in\mathcal{H}$. If $\Gamma:\R^n\to \R^m$ is a continuous map, let us introduce in the fast motion a dependence on the slow motion through the map $\Gamma$, that is, let us consider the slow-fast pair
\begin{equation*}
\begin{cases}
x'=\ep\,f(x,y)\,,\\
y'=g\big(y-\Gamma(x),\tau\big)\,.\\
\end{cases}
\end{equation*}
Due to the continuity of $\Gamma$, it is immediate to check that $\tilde g(x,y,\tau):=g\big(y-\Gamma(x),\tau\big)$ satisfies Assumption~\ref{asu:1} (ii)-(iii). Besides, it is also easy to check that for each fixed $x\in\R^n$, the translated set $\A^x=\bigcup_{h\in\mathcal{H}}\{h\}\times \big(A_h+\Gamma(x)\big)\subset \mathcal{H}\times\R^m$ is the global attractor for the skew-product semiflow $\pi^x$. This implies in particular that the solutions of $y'=g\big(y-\Gamma(x),\tau\big)$ are uniformly ultimately bounded. By the way the fibers $A^x_h$ are translations of the fibers $A_h$ by the vector $\Gamma(x)$, given a compact set $K_0\subset \R^n$, by  Proposition~\ref{prop:unifom ulti bound on K} we can conclude that Assumption~\ref{asu:2} holds. Since Assumption~\ref{assu:cont A^x} also holds, Theorem~\ref{teor:main 2} on the tracking of the fibers of the inflated pullback attractors $A^{x_0(\ep\tau)}_{g{\cdot}\tau}[\delta]$ and its Corollary~\ref{coro 1} apply in this situation. Whenever Assumption~\ref{assu:cont A_(h,x)} holds too, then Theorem~\ref{teor:main 3} applies, with no need to consider inflated pullback attractors, but just pullback attractors for the tracking. Note that no smooth conditions are required on $f$, $g$ or $\Gamma$.

For the numerical simulations, we choose a simple quadratic equation within the previous setting, but with the particularity that the required  conditions are satisfied only locally. Let us explain in detail the example. We consider a slow-fast pair of scalar ODEs by taking in the above model   $f(x,y)\equiv 1$ and $g(y,\tau)=-y^2+p(\tau)$ for a bounded and uniformly continuous map $p:\R\to\R$,
\begin{equation}\label{eq:RiccatiSF}
\begin{cases}
x'=\ep\,,\\
y'=-\big(y-\Gamma(x)\big)^2+p(\tau)\,,\\
\end{cases}
\end{equation}
where $\Gamma:\R\to\R$ is a continuous map and $\ep>0$ is a small parameter.

Note that, for each fixed $x\in \R$, we have a scalar nonautonomous Riccati equation for the fast variable:
\begin{equation}\label{eq:riccati}
y'=-\big(y-\Gamma(x)\big)^2+p(\tau)\,.
\end{equation}
The dynamical scenarios of~\eqref{eq:riccati} have been investigated in detail in Longo et al.~\cite{lnor} and Longo et al.~\cite{lno}. Only one of the following three cases is possible:
\begin{itemize}
  \item[a)] \eqref{eq:riccati} admits an attractor-repeller pair of uniformly separated hyperbolic solutions;
  \item[b)] \eqref{eq:riccati} has bounded solutions but no hyperbolic ones;
  \item[c)] \eqref{eq:riccati} has no bounded solutions.
\end{itemize}
Recall that a bounded solution $  b\colon\R\to\R$ of~\eqref{eq:riccati} is said to be
{\em hyperbolic\/} if the corresponding variational equation $z'=-2\big(b(\tau)-\Gamma(x)\big)\,z$
has an exponential dichotomy on $\R$. That is,  if
there exist $k_b\ge 1$ and $\beta_b>0$ such that either
\begin{equation}\label{eq:masi}
 \exp\int_s^\tau-2\big(b(u)-\Gamma(x)\big)\,du\le k_b\,e^{-\beta_b(\tau-s)} \quad
 \text{whenever $\tau\ge s$}\,
\end{equation}
or
\begin{equation}\label{eq:menosi}
 \exp\int_s^\tau -2\big(b(u)-\Gamma(x)\big)\,du\le k_b\,e^{\beta_b(\tau-s)} \quad
 \text{whenever $\tau\le s$}
\end{equation}
holds. In the case~\eqref{eq:masi}, the hyperbolic solution $  b$ is said to be
{\em (locally) attractive}, and in the case~\eqref{eq:menosi}, $  b$ is {\em (locally) repulsive}. An hyperbolic attractive solution is both locally pullback attractive and locally forward attractive. If an attractor-repeller pair exists for~\eqref{eq:riccati}, then the attractor is strictly above the repeller and they completely describe the asymptotic behaviour of any other solution, in that solutions above the attractor will converge to it in forward time, solutions below the repeller will converge to it in backward time and solutions in between the attractor and the repeller will converge to the former in forward time and to the latter in backward time.

For the computations, we shall  consider the quasi-periodic map
\begin{equation}\label{eq:p}
p(\tau)=-\sin(\tau/2)-\sin(\sqrt{5}\,\tau)+0.962\,,
\end{equation}
and work under the assumption that case a) holds true for $y'=-y^2+p(\tau)$: this is supported by numerical evidence as shown in Figure~\ref{fig:assumptions}.
\begin{figure}
    \centering
\begin{overpic}[abs,unit=0.5mm,scale=.25, width=0.95\textwidth]{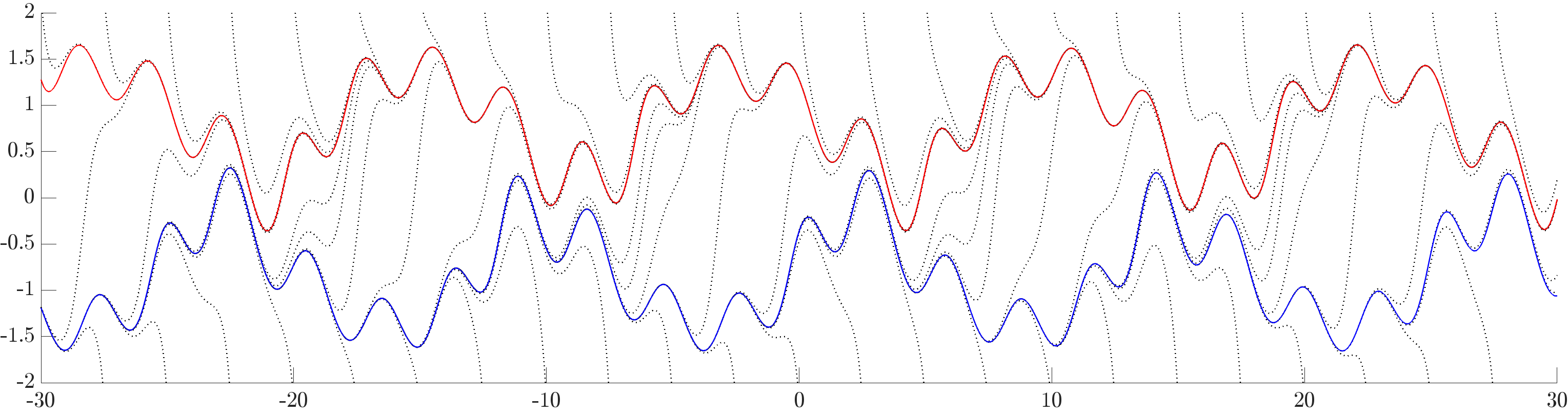}
\put(-5,33){$y$}
\put(121,-5){$\tau$}
\end{overpic}
\caption{The attractor (in red) and the repeller (in blue) for $y'=-y^2+p(\tau)$ with $p$ as in~\eqref{eq:p} together with sample solutions (dotted lines) to highlight the qualitative behaviour.}
\label{fig:assumptions}
\end{figure}
In such a case every equation of the type $y'=-(y-\gamma)^2+p(\tau)$, with $\gamma\in\R$, also has an attractor-repeller pair, which is in fact a vertical translation by $\gamma$ of the attractor-repeller pair obtained for $\gamma=0$. Additionally, when the hull $\mathcal{H}$ of the map $p(\tau)$ is built, the well-known  propagation of the exponential dichotomy through the shift flow on the hull
guarantees that for all $h\in\mathcal{H}$, $y'=-y^2+h(\tau)$ has an attractor-repeller pair (see~\cite{lnor}). In fact, there are two hyperbolic copies of the hull $\mathcal{H}$, $\{(h,r(h))\mid h\in \mathcal{H}\}\subset \mathcal{H}\times \R$ which is a (local) repeller, and $\{(h,a(h))\mid h\in \mathcal{H}\}\subset \mathcal{H}\times \R$ which is a (local) attractor, such that for each $h\in \mathcal{H}$, $\tau\mapsto a(h{\cdot}\tau)$ and $\tau\mapsto r(h{\cdot}\tau)$ is the attractor-repeller pair for the equation $y'=-y^2+h(\tau)$.

Let us explain what this implies for the skew-product flows $\pi^x$ induced by the solutions of the family  $y'=-(y-\Gamma(x))^2+h(\tau)$ for $h\in \mathcal{H}$, for each fixed $x\in \R$:

1. There are two hyperbolic copies of the hull $\mathcal{H}$, $\{(h,r(h)+\Gamma(x))\mid h\in \mathcal{H}\}\subset \mathcal{H}\times \R$ which is a (local) repeller, and $\{(h,a(h)+\Gamma(x))\mid h\in \mathcal{H}\}\subset \mathcal{H}\times \R$ which is a (local) attractor,     given by $\pi^x$-invariant continuous maps.

2. The restriction of the skew-product semiflow $\pi^x$ to the invariant open zone $G(x):=\{(h,y_0)\in \mathcal{H}\times \R \mid y_0> r(h)+\Gamma(x)\}$ has a global attractor, which is precisely $\A^x=\{(h, a(h)+\Gamma(x))\mid h\in \mathcal{H}\}$. On this occasion, the global attractor attracts sets $\mathcal{H}\times X_1\subset G(x)$ for compact sets $X_1$ of $\R$.

3. The solutions of $y'=-(y-\Gamma(x))^2+h(\tau)$, $y(0)=y_0$ for $(h,y_0)\in G(x)$ are  uniformly ultimately bounded.

Although this might require a note explaining some technicalities, we can affirm that, fixed a compact set $K_0\subset\R$,  on the restricted area  \[
\{(h,x,y_0)\in \mathcal{H}\times K_0\times \R \mid y_0>r(h)+\Gamma(x)\}\,,
\]
$\A=\{(h,x,a(h)+\Gamma(x))\mid (h,x)\in \mathcal{H}\times K_0\}\subset \mathcal{H}\times K_0\times\R$ is the global attractor for the skew-product semiflow $\pi$, Assumption~\ref{assu:cont A_(h,x)} holds and Theorem~\ref{teor:main 3}~(iv) applies.

Fixed an interval $[0,t_0]$ in the slow time scale and the initial condition $x(0)=0$, since the map $f$ is independent of $y$, for all $\ep>0$, $x_\ep(t)=t$ assumes values in $K_0=[0,t_0]\subset\R$ and there exists the limit as $\ep\to 0$, $x_\ep(t)\to x_0(t)=t$, for $t\in [0,t_0]$.
Therefore, in accordance with the conclusions of Theorem~\ref{teor:main 3}, we expect that for $\ep>0$ small enough, every solution $(x_\ep(t),y_\ep(t))$ of~\eqref{eq:RiccatiSF} can be extended to $t\in [0,t_0]$ and the tracking of $\eta(p{\cdot}\tau,x_0(\ep\tau)):=a(p{\cdot}\tau)+\Gamma(\ep\tau)$ stated in~\eqref{eq:teorema copy} happens. To show this numerically, we shall firstly fix
\[
\Gamma(x)=0.2\,\big(\sin(\sqrt{2}\,x)+\cos(x/5)\big)\,,\quad x\in\R\,.
\]
Note that for each $\ep>0$, $y_\ep(\tau)$ is the solution of the equation
\begin{equation}\label{eq:riccati ep}
y'=-\big(y-\Gamma(\ep\tau)\big)^2+p(\tau)\,
\end{equation}
with a fixed initial condition $y(0)=y_0$. The dynamical scenarios of this equation are the same as those of~\eqref{eq:riccati}  and it  has an attractor-repeller pair, provided that $\ep$ is small enough (see~\cite{lnor}). We denote by $a_\ep(\tau)$, $\tau\in\R$ the hyperbolic (local) attractor  towards which all the solutions starting above the repeller converge in forward time. So, the tracking will be evident if, for $\ep$ small enough, this curve remains sufficiently close to $\eta(p{\cdot}\tau,x_0(\ep\tau))=a(p{\cdot}\tau)+\Gamma(\ep\tau)$. This is what we show in Figure~\ref{fig:qp-gamma} for some values of $\ep$.

\begin{figure}
    \centering
\begin{overpic}[abs,unit=0.5mm,scale=.25, trim={1.5cm 1.9cm 2cm 1.1cm},clip,width=0.45\textwidth]{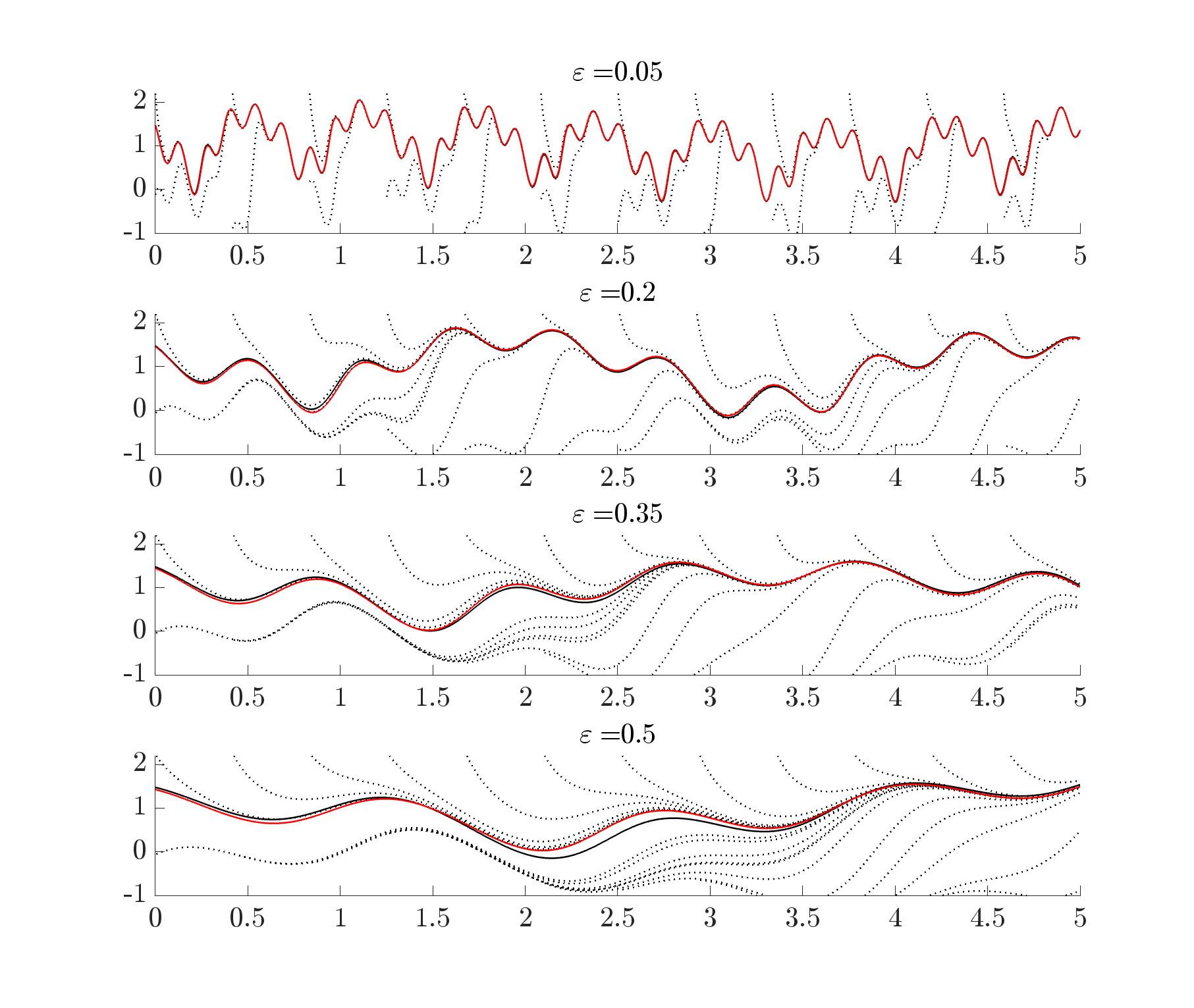}
\put(-5,50){$y$}
\put(60,-5){$t$}
\end{overpic}
\hspace{4mm}
\begin{overpic}[abs,unit=0.5mm,scale=.25, width=0.45\textwidth]{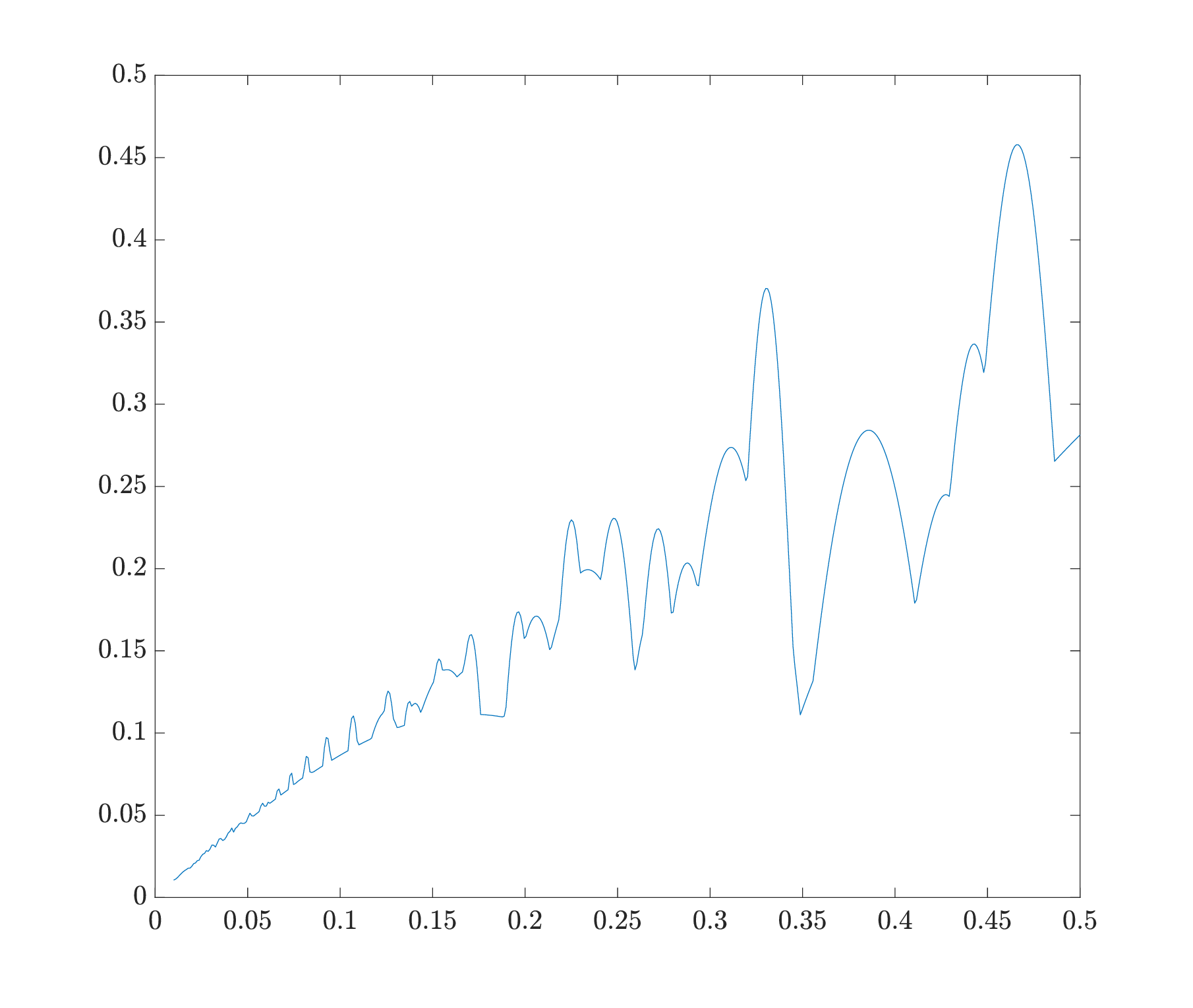}
\put(-8,7){\rotatebox{90}{$\max_\tau|a_\ep(\tau)-\eta(p{\cdot}\tau,\ep\tau)|$}}
\put(60,-5){$\ep$}
\end{overpic}
\caption{In the left panel the graph of $\eta\big(p{\cdot}\tau,\ep\tau\big)$ is depicted in a black solid line  and the hyperbolic (local) attractor $a_\ep(\tau)$ for~\eqref{eq:riccati ep} is depicted in red for $\ep\in\{0.05,0.2,0.35,0.5\}$. The black dotted lines represent sample solutions $y_\ep(\tau)$ starting at different initial conditions showcasing the sought for tracking phenomenon after a certain amount of time $T=T(\delta,y_0,\A)>0$ has passed.
The right panel shows the behaviour of the approximation as $\ep\to0$.
The attractor $a_\ep(\tau)$ is used as a proxy of the approximation provided by $\eta(p{\cdot}\tau,\ep\tau)$ for $\tau\in[0,20/\ep]$ for each $\ep\in (0,0.5]$.
}
\label{fig:qp-gamma}
\end{figure}

\begin{figure}
    \centering
\begin{overpic}[abs,unit=0.5mm,scale=.25, width=0.42\textwidth]{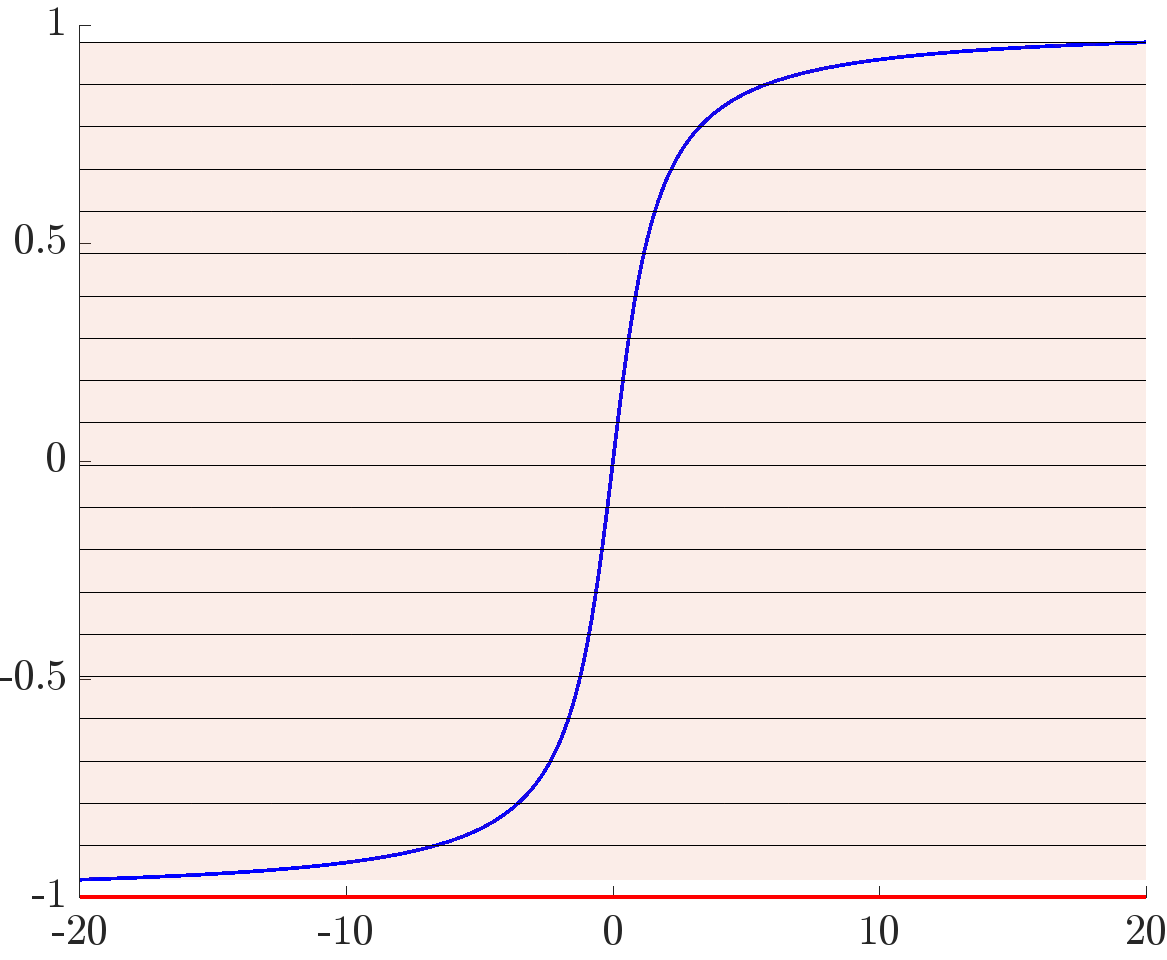}
\put(-5,45){$\gamma$}
\put(53,-5){$\tau$}
\end{overpic}
\hspace{2mm}
\begin{overpic}[abs,unit=0.5mm,scale=.25,  width=0.42\textwidth]{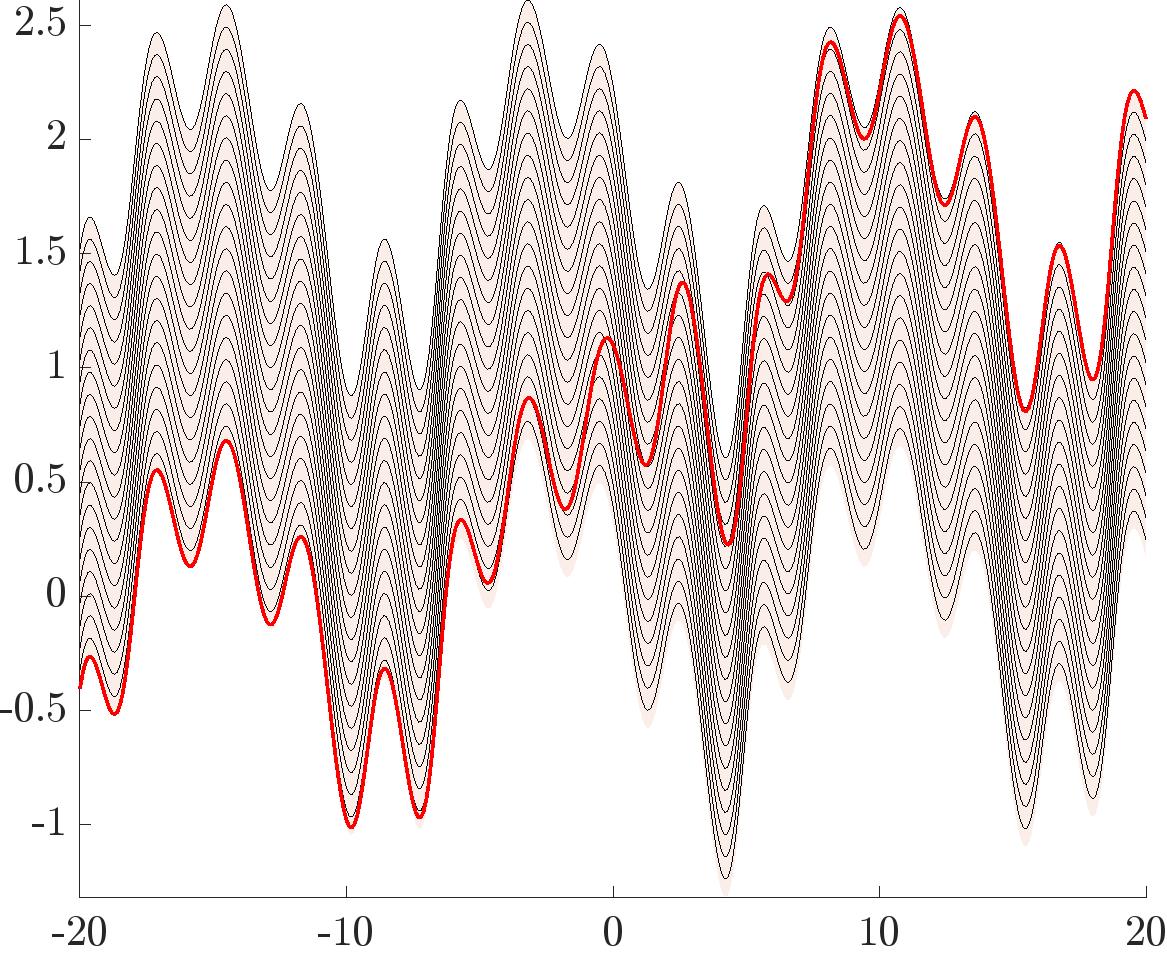}
\put(-5,45){$y$}
\put(53,-5){$\tau$}
\end{overpic}\\
\vspace{5mm}
\begin{overpic}[abs,unit=0.5mm,scale=.25,  width=0.42\textwidth]{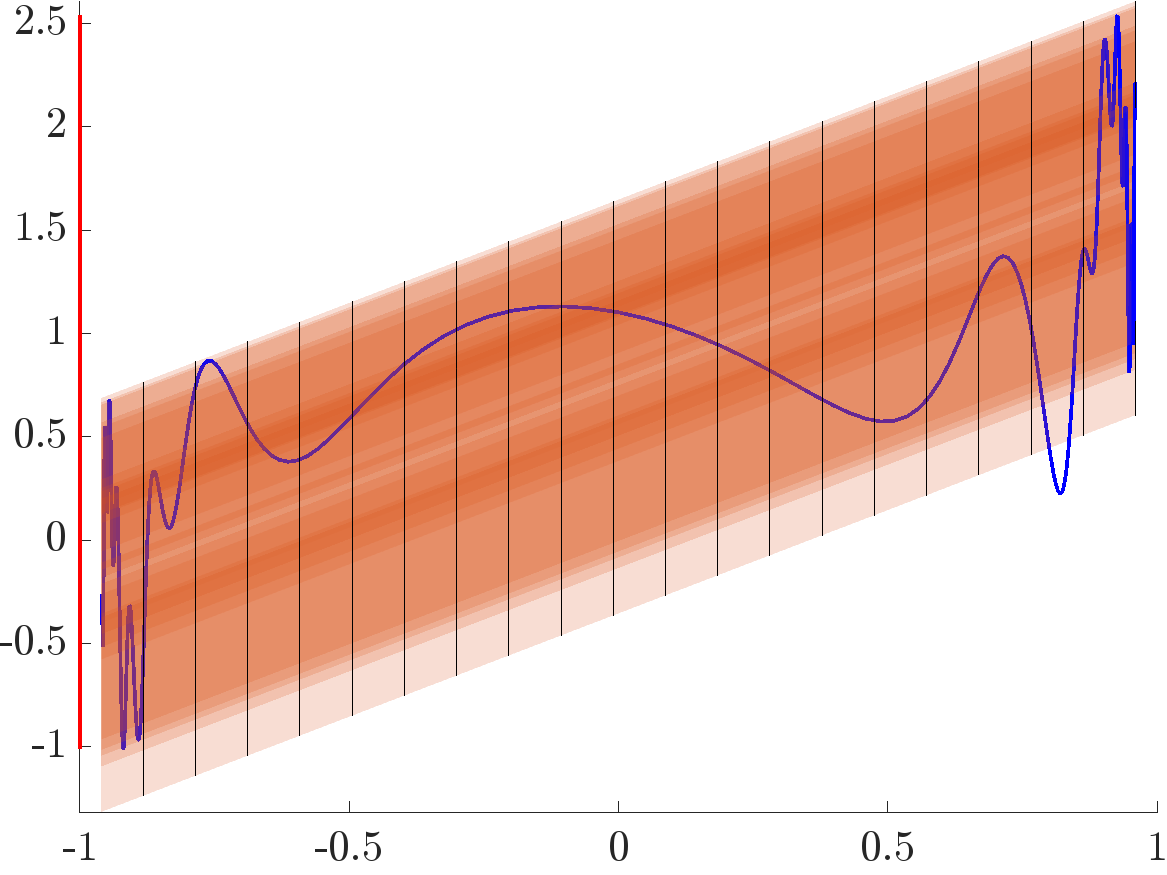}
\put(-5,45){$y$}
\put(53,-5){$\gamma$}
\end{overpic}
\hspace{2mm}
\begin{overpic}[abs,unit=0.5mm,scale=.25, width=0.42\textwidth]{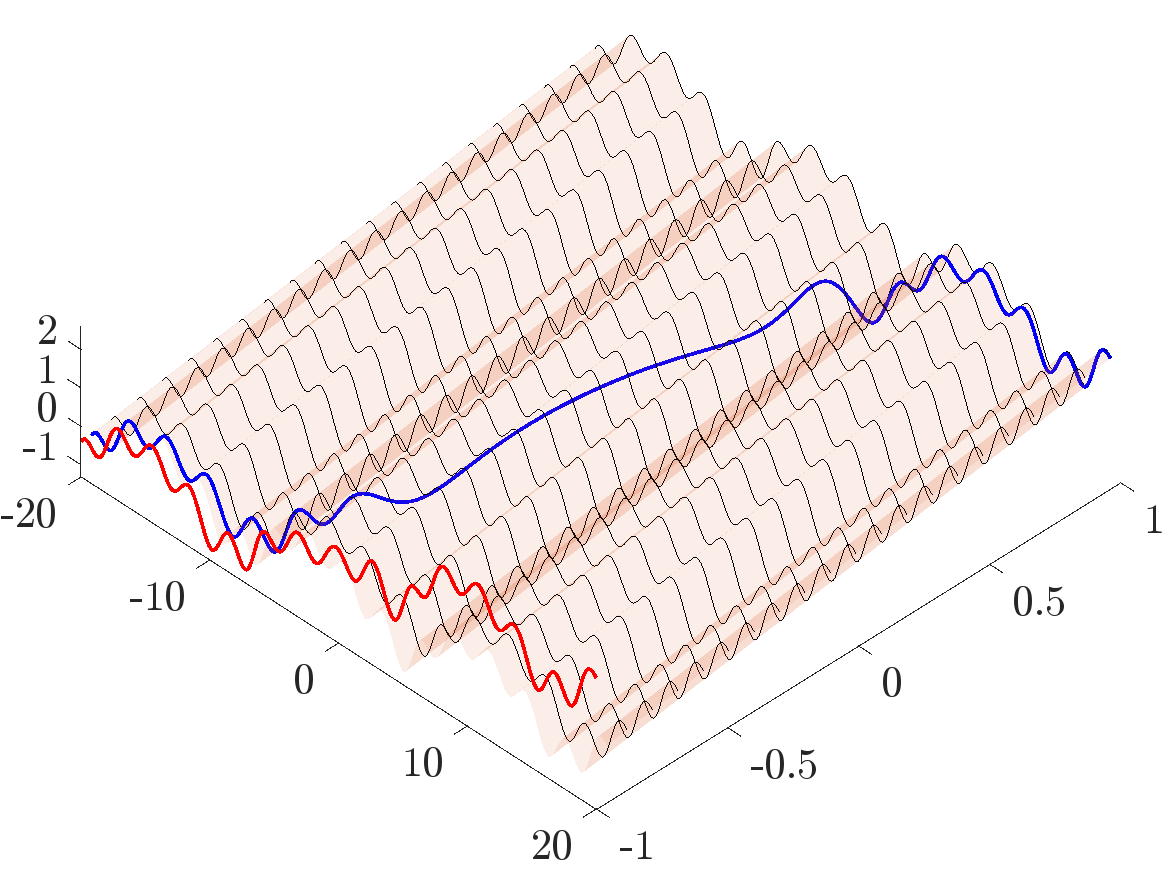}
\put(-5,48){$y$}
\put(20,15){$\tau$}
\put(82,15){$\gamma$}
\end{overpic}
\caption{Tracking in a rate-dependent problem. The attractor (in red) of~\eqref{eq:riccati ep} with $\Gamma(x)=2/\pi\, \arctan (x)$ and $\ep=0.8$ tracks the parametric family of local attractors for $y'=-(y-\gamma)^2+p(\tau)$ and $\gamma\in (-1,1)$ (orange surface layered by black curves for a sample of values in $(-1,1)$). The blue curve is the projection of the red attractor onto the orange surface. Four different point of views are shown. }
\label{fig:rate-tracking}
\end{figure}

As a second example, we consider the map $\Gamma(x)=2/\pi\, \arctan (x)$, $x\in\R$  which is asymptotically constant, i.e., there exist the limits $\lim_{x\to\pm\infty} \Gamma(x)=\pm 1$. In this case, for each $\ep>0$, $y_\ep(\tau)$ is a solution of the differential equation~\eqref{eq:riccati ep} which models a transition from the past equation
\[
y'=- (y+1)^2 +p(\tau)
\]
to the future equation
\[
y'=- (y-1)^2 +p(\tau)\,,
\]
and the speed of the transition is given by the rate $\ep>0$. This set-up is typically used to investigate a phenomenon of critical transition called rate-induced tipping.  In rate-induced tipping, two asymptotic problems differ only in the value of a parameter but share the same dynamical structure with the presence of a local attractor. A time-dependent variation of the parameter $\Gamma(\ep \tau)$ connects the two problems, respectively as $\tau\to-\infty$ and as $\tau\to\infty$. Typically, an attractor of the transition equation permits to connect the attractors in the past and in the future whenever $\ep$, the rate of transition, is sufficiently small. This phenomenon is also called tracking. In some cases, however, this connection is broken when $\ep$ increases, this is the so-called tipping. For a detailed presentation, we refer the interested reader to the initial work by Ashwin et al.~\cite{paper:APW} with autonomous systems in the past and in the future,  and the subsequent properly nonautonomous works~\cite{lnor,lno,lno2,dlo}, where the quadratic and more general concave problems were thoroughly analysed. Incidentally, these works also clarify that a rate-induced tipping (as well as any other possible critical transition) for this type of problems always corresponds to a nonautonomous saddle-node bifurcation.

It is interesting to note that whenever $p(\tau)$ is constant, the autonomous slow-fast theory on non-compact manifolds by Eldering~\cite{book:Eld} allows to explain the tracking in terms of the persistence of the critical manifold of hyperbolic stable equilibria of the layered problem in the form of a so-called {\it slow manifold}. In the nonautonomous version of these results, however, things are different. On the one hand, the so-called {\it integral manifold} of each layer problem, even when it is a copy of the base (in general a global (pullback) attractor can be much more complicated),  has some  continuity properties but is not necessarily smooth. On the other hand,  the term {\it slow manifold} no longer goes well with the dynamics. The counterpart of the critical manifold is given by the union of the fibers of the pullback attractor and the tracking is observed in the fast scale of time. Now, however, Theorem~\ref{teor:main 3} allows us to explain the tracking phenomenon in a fashion which is similar at least in spirit. The attractor of the transition equation does in fact track the curve that at each instant of time $\tau$ attains the value $\eta(p{\cdot}\tau,\ep\tau)=a(p{\cdot}\tau)+\Gamma(\ep\tau)$ of the pullback and forward (local) attractor of $y'=-(y-\gamma)^2+p(\tau)$ for the value of the parameter $\gamma= \Gamma(\ep\tau)$. Figure~\ref{fig:rate-tracking} aims at showcasing this phenomenon by representing the attractor of the transition equation depending on time (in red) and also projected (in blue) onto the family of attractors of the layer problems depending on the value of the parameter $\gamma$ (orange surface and black curves).
It is important to note that Theorem~\ref{teor:main 3} applies also to higher dimensional problems and attractors which are not a singleton in the fiber.
\par
All the numerical experiments have been carried out with Matlab R2023b and the produced code is available at~\cite{code:LOS}.
\section{A more general version of the results under relaxed assumptions}\label{secOmegalim}
We keep the notation introduced in Sections~\ref{secSlowfast} and~\ref{secTracking} and consider the slow-fast pair~\eqref{eq:slowfastent} with initial conditions~\eqref{eq:initial values}. In this section we maintain Assumption~\ref{asu:1} on the continuity  and limited growth of $f$, and the admissible and $y$-Lipschitz character of $g$, but  we  relax the requirements in  Assumption~\ref{asu:2} into Assumption~\ref{asu:3} in a way  that the base flow $\mathcal{H}$ built associated to the  fast variables is now restricted to $\Om(g)$, the omega-limit set of $g$, which is a compact invariant subset of the hull $\mathcal{H}$. Relaxing Assumption~\ref{asu:2} makes the range of applications clearly wider.

Similar to Definition~\ref{def:ultimately}, we now include the definition of uniform ultimate boundedness of the solutions on an interval $[r_0,\infty)$.
\begin{defn}
Let $K_0\subset \R^n$ and assume that for each $x\in K_0$, $y_0\in \R^m$ and $\tau_0\geq r_0$ the solution  $y(\tau,g,x,\tau_0,y_0)$ is defined on $[\tau_0,\infty)$.  The solutions of the parametric family $y'=g(x,y,\tau)$ for $x\in K_0$ are said to be
{\em uniformly ultimately bounded on $[r_0,\infty)$\/} if there is a constant $c=c(r_0)>0$ such that for every $d>0$ there is a time $T=T(r_0,d)$ such that
\begin{equation*}
  |S_g^x(\tau+\tau_0,\tau_0)\,y_0|=|y(\tau+\tau_0,g,x,\tau_0,y_0)|\leq c
\end{equation*}
for every $x\in K_0$, whenever  $\tau_0\geq r_0$, $\tau\geq T$ and $|y_0|\leq d$.
\end{defn}

\begin{assu}\label{asu:3}
\begin{itemize}
  \item[]
  \item[(i)] For each $h\in \{g{\cdot}\tau_0\mid \tau_0\geq 0\}\cup \Om(g)$, $x\in \R^n$ and $y_0\in \R^m$, the solution  $y(\tau,h,x,y_0)$ is defined for all $\tau\geq 0$.
  \item[(ii)] For each compact set $K_0$ of $\R^n$, the solutions of the parametric family $y'=g(x,y,\tau)$ for $x\in K_0$ are uniformly ultimately bounded on the interval  $[0,\infty)$.
\end{itemize}
\end{assu}
We omit the proof of the following result, since it is part of Theorem~5.10 in~\cite{paper:LNO2}.
\begin{prop}
Under  Assumptions~{\rm\ref{asu:1}~(ii)-(iii)}  and~{\rm\ref{asu:3}},  given a compact set $K_0\subset \R^n$ and a map $h\in \Om(g)$,
 the solutions of the family of equations $y'=h(x,y,\tau)$, $x\in K_0$ are uniformly ultimately bounded.
\end{prop}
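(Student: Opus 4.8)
The plan is to reduce the statement to Assumption~\ref{asu:3}~(ii) for $g$ itself: every $h\in\Om(g)$ is a limit of forward translates $g{\cdot}s_k$ with $s_k\to\infty$, so the uniform bound available for $g$ on $[0,\infty)$ can be transported to the $h$-equation by the joint continuity of the layered semiflow, and then the invariance of the omega-limit set takes care of negative initial times. I expect the argument to be short; the only point requiring attention is that the constant and the attraction time extracted from Assumption~\ref{asu:3}~(ii) do not depend on the translation parameter, which is exactly what lets the bound survive the limit and be uniform over all of $\Om(g)$.

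First I would record the preliminary fact, needed so that Definition~\ref{def:ultimately} even makes sense for the $h$-family, that for $x\in K_0$ and $h\in\Om(g)$ the solution $y(\tau,h,x,\tau_0,y_0)$ is defined on $[\tau_0,\infty)$ for \emph{every} $\tau_0\in\R$. For $\tau_0=0$ this is Assumption~\ref{asu:3}~(i). For $\tau_0\neq 0$, relation~\eqref{eq:translated} gives $y(\tau,h,x,\tau_0,y_0)=y(\tau-\tau_0,h{\cdot}\tau_0,x,y_0)$, and $h{\cdot}\tau_0\in\Om(g)$ by invariance of $\Om(g)$, so the right-hand side is defined for $\tau-\tau_0\geq 0$, i.e.\ for all $\tau\geq\tau_0$.

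Next comes the core step. Fix the constant $c>0$ and, for each $d>0$, the time $T=T(d)$ supplied by Assumption~\ref{asu:3}~(ii) applied to $g$ on $[0,\infty)$; by~\eqref{eq:translated} this says that $|y(\tau,g{\cdot}\tau_0,x,y_0)|\leq c$ for all $x\in K_0$, all $\tau_0\geq 0$, all $\tau\geq T(d)$ and all $|y_0|\leq d$. Let $\tilde h\in\Om(g)$ be arbitrary and choose $s_k\to\infty$ with $g{\cdot}s_k\to\tilde h$ in $\mathcal{H}$. By Assumption~\ref{asu:3}~(i) all the solutions involved are globally defined in the future, so the joint continuity of the layered semiflow~\eqref{eq:sk-pr} (which rests on the admissibility in Assumption~\ref{asu:1}~(ii)) yields $y(\tau,g{\cdot}s_k,x,y_0)\to y(\tau,\tilde h,x,y_0)$ for each fixed $\tau\geq 0$, $x\in K_0$, $y_0\in\R^m$. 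Since $s_k\geq 0$ for $k$ large, the displayed bound applies to each $g{\cdot}s_k$, and passing to the limit (the norm being continuous) gives
\begin{equation*}
 |y(\tau,\tilde h,x,y_0)|\leq c\qquad\text{for every }\tilde h\in\Om(g),\ x\in K_0,\ \tau\geq T(d),\ |y_0|\leq d.
\end{equation*}
Crucially, $c$ and $T(d)$ depend only on $g$ and $K_0$, so this estimate holds with the same constants for all $\tilde h\in\Om(g)$ and all $x\in K_0$.

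Finally, to get uniform ultimate boundedness for a fixed $h\in\Om(g)$ in the full sense of Definition~\ref{def:ultimately}, take any $\tau_0\in\R$. Then $h{\cdot}\tau_0\in\Om(g)$ by invariance, and~\eqref{eq:translated} together with the previous estimate gives $|y(\tau+\tau_0,h,x,\tau_0,y_0)|=|y(\tau,h{\cdot}\tau_0,x,y_0)|\leq c$ for all $x\in K_0$, $\tau\geq T(d)$ and $|y_0|\leq d$, which is precisely the claim. The main obstacle, such as it is, lies entirely in the previous paragraph: one must notice that the bound furnished by Assumption~\ref{asu:3}~(ii) is genuinely uniform in the translation parameter $\tau_0\geq 0$, since that uniformity is what allows the passage to the limit $g{\cdot}s_k\to\tilde h$ and, subsequently, the reduction of the case $\tau_0\in\R$ to the case $\tau_0=0$ via the invariance of $\Om(g)$.
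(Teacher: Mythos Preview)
Your argument is correct. The paper itself does not prove this proposition: it simply cites Theorem~5.10 of~\cite{paper:LNO2} and omits any argument. Your proof is exactly the natural one and, in fact, is the same technique the paper employs in the proof of Proposition~\ref{prop:existe atractor} (and alludes to again just before Proposition~\ref{prop:existe atractor 2}): pass the uniform-in-$\tau_0$ bound for $g{\cdot}\tau_0$ to the limit $\tilde h\in\Om(g)$ via continuous dependence, then use the invariance of $\Om(g)$ together with~\eqref{eq:translated} to cover all initial times. The only mild caveat is that under Assumption~\ref{asu:3} the semiflow~\eqref{eq:sk-pr} is not a priori globally defined on all of $\mathcal{H}\times K_0\times\R^m$; however, you only invoke continuous dependence at the elements $g{\cdot}s_k$ and $\tilde h$, all of which lie in $\{g{\cdot}\tau_0\mid\tau_0\ge 0\}\cup\Om(g)$ where global forward existence is guaranteed by Assumption~\ref{asu:3}~(i), so the pointwise convergence $y(\tau,g{\cdot}s_k,x,y_0)\to y(\tau,\tilde h,x,y_0)$ is justified.
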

As a consequence, under Assumptions~\ref{asu:1} and~\ref{asu:3}, whenever $g\in \Om(g)$, then $\Om(g)=\mathcal{H}$,  Assumption~\ref{asu:2} holds and  the results in the previous section directly apply. This happens, for instance, whenever the flow in the hull is minimal, as it is the case when $g$ is periodic or almost periodic, uniformly on compact sets.  This makes the results  in this section specially relevant when $g\notin\Om(g)$. Note that this entails a much weaker recurrence in time than, e.g., almost periodicity.

Under  Assumptions~{\rm\ref{asu:1}~(ii)-(iii)}  and~{\rm\ref{asu:3}}, fixed a compact set $K_0\subset \R^n$, let us consider
the globally defined skew-product semiflow
\begin{equation}\label{eq:sk-pr 2}
 \begin{array}{ccl}
 \pi:\R_+\times\Om(g) \times K_0 \times \R^m & \longrightarrow & \Om(g) \times K_0 \times \R^m \\
 (\tau,h,x,y_0) & \mapsto &(h{\cdot}\tau,x,y(\tau,h,x,y_0))\,
\end{array}
\end{equation}
over the base flow on $\Om(g) \times K_0$; and for each $x\in \R^n$ fixed, the skew-product semiflow
 \begin{equation}\label{eq:sk-pr x 2}
 \begin{array}{ccl}
 \pi^x: \R_+\times\Om(g)  \times \R^m & \longrightarrow & \Om(g)  \times \R^m \\
 (\tau,h,y_0) & \mapsto &(h{\cdot}\tau,y(\tau,h,x,y_0))\,
\end{array}
\end{equation}
over the base flow on $\Om(g)$.  There always exists the global attractor for the previous skew-product semiflows. We omit the proof, since it follows the same arguments as those in the proof of Proposition~\ref{prop:existe atractor}. Just note that  each $h\in \Om(g)$ is the limit of a sequence $\{g{\cdot}\tau_k\}_{k\geq 1}$  as $k\to \infty$, for some  $\tau_k\to\infty$.
\begin{prop}\label{prop:existe atractor 2}
Under Assumptions~{\rm \ref{asu:1}~(ii)-(iii)} and~{\rm\ref{asu:3}},  given a compact set $K_0\subset \R^n$, the following statements hold:
\begin{itemize}
  \item[\rm{(i)}] There is a global attractor $\A\subset \Om(g) \times K_0 \times \R^m$ for the skew-product semiflow $\pi$ given in~\eqref{eq:sk-pr 2} which can be written as
\[
\A=\bigcup_{(h,x)\in\Om(g)\times K_0} \{(h,x)\}\times A_{(h,x)}
\]
for the sections $A_{(h,x)}=\{ y\in \R^m\mid (h,x,y)\in \A \}$, and the nonautonomous set $\{A_{(h,x)}\}_{(h,x)\in \Om(g)\times K_0}$ is the pullback attractor.
\item[\rm{(ii)}] For each fixed $x\in K_0$, the skew-product semiflow $\pi^x$ given in~\eqref{eq:sk-pr x 2} has a global attractor $\A^x\subset \Om(g)  \times \R^m$ which can be written as
\[
\A^x=\bigcup_{h\in\Om(g)} \{h\}\times A^x_h
\]
for the sections $A^x_h=\{ y\in \R^m\mid (h,y)\in \A^x \}$, and the nonautonomous set $\{A^x_h\}_{h\in \Om(g)}$ is the pullback attractor of  $\pi^x$.
\end{itemize}
   \end{prop}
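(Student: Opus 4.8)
The plan is to repeat, almost verbatim, the argument of Proposition~\ref{prop:existe atractor}, the only new feature being that the base flow now lives on the compact invariant set $\Om(g)$ rather than on the full hull $\mathcal{H}$. First I would record that, by Assumption~\ref{asu:3}~(i), every solution that appears below is defined on $[0,\infty)$, so that $\pi$ in~\eqref{eq:sk-pr 2} is a genuine semiflow on $\R_+\times\Om(g)\times K_0\times\R^m$ and, for each fixed $x\in K_0$, $\pi^x$ in~\eqref{eq:sk-pr x 2} is a genuine semiflow on $\R_+\times\Om(g)\times\R^m$; uniqueness of the fiber maps comes from Assumption~\ref{asu:1}~(iii) and joint continuity from the admissibility in Assumption~\ref{asu:1}~(ii).

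Next I would manufacture a compact absorbing set. Let $c>0$ be the constant furnished by Assumption~\ref{asu:3}~(ii) and put $B:=\overline B(0,c)\subset\R^m$. Given a bounded set $D\subset\R^m$, choose $d>0$ with $D\subset\overline B(0,d)$ and the corresponding time $T=T(0,d)$; then relation~\eqref{eq:translated} gives $y(\tau,g{\cdot}\tau_0,x,D)=y(\tau+\tau_0,g,x,\tau_0,D)\subset B$ for every $x\in K_0$, every $\tau_0\ge 0$ and every $\tau\ge T$.

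The key step -- and the only one requiring any care -- is to propagate this inclusion from the forward translates $\{g{\cdot}\tau_0\mid\tau_0\ge 0\}$ to every $h\in\Om(g)$. For such an $h$ there is a sequence $\tau_k\to\infty$ with $g{\cdot}\tau_k\to h$ in $(\mathcal{H},d)$, so for fixed $x\in K_0$, $y_0\in D$ and $\tau\ge T$ the joint continuity of the layered semiflow yields $y(\tau,g{\cdot}\tau_k,x,y_0)\to y(\tau,h,x,y_0)$; since each term lies in the closed ball $B$, so does the limit. Hence $y(\tau,\Om(g),K_0,D)\subset B$ for all $\tau\ge T$, which says that $\Om(g)\times K_0\times B$ is a compact absorbing set for $\pi$, and likewise $\Om(g)\times B$ is a compact absorbing set for each $\pi^x$, $x\in K_0$. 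Alternatively, one may simply quote the Proposition stated just above, which already provides uniform ultimate boundedness of the families $y'=h(x,y,\tau)$, $x\in K_0$, for every $h\in\Om(g)$, and then this step is immediate.

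Finally I would close the argument exactly as before: the existence of the global attractors $\A\subset\Om(g)\times K_0\times\R^m$ and $\A^x\subset\Om(g)\times\R^m$ follows from \cite[Theorem~1.36]{klra}, and since the base $\Om(g)$ is compact and invariant, each of these attractors decomposes into its fibers $A_{(h,x)}$, resp. $A_h^x$, and the resulting nonautonomous set is the pullback attractor, see \cite[Chapter~3]{klra} or \cite[Chapter~16]{book:CLR}. I do not anticipate any genuine obstacle; this is precisely why the proof is omitted in the text, the most delicate point being merely the limiting argument that transports the absorbing ball onto $\Om(g)$.
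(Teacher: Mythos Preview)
Your proposal is correct and follows essentially the same approach the paper indicates: it mirrors the proof of Proposition~\ref{prop:existe atractor}, with the single adjustment that each $h\in\Om(g)$ is reached as a limit of $g{\cdot}\tau_k$ with $\tau_k\to\infty$ (hence $\tau_k\ge 0$), which is exactly the remark the authors make before omitting the proof. The limiting argument you spell out to carry the absorbing ball from $\{g{\cdot}\tau_0\mid\tau_0\ge 0\}$ to $\Om(g)$ is the only point requiring attention, and you handle it correctly.
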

\begin{rmk}\label{rmk:asump in Arts}
The hypotheses in this section are equivalent to those in the paper by Artstein~\cite{paper:ZA99} for the case of dynamics in the complete space  $\mathcal{H}\times\R^m$. More precisely, under Assumption~\ref{asu:3}~(i) on the existence of solutions, let us check that  Assumption~\ref{asu:3}~(ii) is equivalent to  Assumption~5.1~(iii) in~\cite{paper:ZA99} with $G(x):=\R^m$ for $x\in\R^n$.

The argument to see that our hypothesis implies that of Artstein's is the same as the one in Remark~\ref{rmk:atractor uniforme}~(1), taking the compact set $K(x):=\bigcup_{h\in\Om(g)} A_h^x\subset \R^m$ for each $x\in \R^n$.
Conversely, suppose that for a family of compact sets $\{K(x)\}_{x\in\R^n}$ of $\R^m$, if $x_k\to x_0$ in $\R^n$,  $y_k\to y_0$ in $\R^m$, $\tau_k\geq 0$ and $\sigma_k\to\infty$, then $y(\sigma_k+\tau_k,g,x_k,\tau_k,y_k)$ converges to the compact set $K(x_0)$; and $K$ has a compact graph over compact sets of $\R^n$. Then, we fix a compact set $K_0\subset \R^n$ and look at the parametric family $y'=g(x,y,\tau)$ for $x\in K_0$. By the compact graph assumption, we can take an $r=r(K_0)>0$ so that
\[
\bigcup_{x\in K_0} K(x) \subset B(0,r)\subset \R^m\,,
\]
where $B(0, r)$ is the ball of $\R^m$ centered at $0$ with radius $ r$. An argument by contradiction (in the same fashion as in Proposition~\ref{prop:unifom ulti bound on K}, but much shorter here) permits to conclude that the solutions are uniformly ultimately bounded on $[0,\infty)$.
%
\end{rmk}

We now state, for completeness, the  result on the forward attracting properties of either the parametrically inflated pullback attractors or the pullback attractor. Once more, the proof can be omitted. Let us first rephrase in this new context the additional assumptions of continuity introduced in Section \ref{secSlowfast}.
\begin{assu}\label{assu:cont A^x 2}
The setvalued mapping $x\in K_0\mapsto \A^x\subset \Om(g)\times \R^m$ is continuous for the Hausdorff distance $d_H$.
\end{assu}

\begin{assu}\label{assu:cont A_(h,x) 2}
The setvalued mapping $(h,x)\in \Om(g)\times K_0\mapsto A_{(h,x)}\subset \R^m$ is continuous for the Hausdorff distance $d_H$.
\end{assu}

\begin{thm}\label{teor:param inflated pullb att 2}
Under Assumptions~{\rm \ref{asu:1}~(ii)-(iii)} and~{\rm\ref{asu:3}},  given a compact set $K_0\subset \R^n$, let $\A\subset \Om(g) \times K_0 \times \R^m$  be the global attractor  for the skew-product semiflow~\eqref{eq:sk-pr 2} and let $\A^x\subset \Om(g) \times \R^m$  be the global attractor  for the skew-product semiflow~\eqref{eq:sk-pr x 2} for each fixed $x\in K_0$. Then, fixed a $\delta>0$,  for every bounded set $D\subset \R^m$,
\begin{equation*}
  \lim_{\tau\to\infty} \bigg( \sup_{h\in \Om(g),\, x\in K_0} {\rm dist}\big(y(\tau,h,x,D),A_{(h{\cdot}\tau,x)}[\delta]\big) \bigg) =0\,.
\end{equation*}
If Assumption~{\rm\ref{assu:cont A^x 2}} holds, then:
\begin{equation*}
  \lim_{\tau\to\infty} \bigg( \sup_{h\in \Om(g),\, x\in K_0} {\rm dist}\big(y(\tau,h,x,D),A^x_{h{\cdot}\tau}[\delta]\big) \bigg) =0\,.
\end{equation*}
Finally, if Assumption~{\rm\ref{assu:cont A_(h,x) 2}} holds,  then:
\begin{equation*}
  \lim_{\tau\to\infty} \bigg( \sup_{h\in \Om(g),\, x\in K_0} {\rm dist}\big(y(\tau,h,x,D),A_{(h{\cdot}\tau,x)}\big) \bigg) =0\,.
\end{equation*}
\end{thm}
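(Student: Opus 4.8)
The plan is to repeat, almost verbatim, the proof of Theorem~\ref{teor:param inflated pullb att}, replacing the base space $\mathcal{H}$ throughout by its compact invariant subset $\Om(g)$. This is legitimate: Proposition~\ref{prop:existe atractor 2} already furnishes, under Assumptions~\ref{asu:1}~(ii)--(iii) and~\ref{asu:3}, the global attractor $\A\subset\Om(g)\times K_0\times\R^m$ for the semiflow~\eqref{eq:sk-pr 2} and the attractors $\A^x\subset\Om(g)\times\R^m$ for~\eqref{eq:sk-pr x 2}; moreover the shift flow restricted to $\Om(g)$ is still a continuous flow on a compact metric space (with the metric $d$ inherited from the hull), so that $h{\cdot}\tau\in\Om(g)$ for every $h\in\Om(g)$ and $\tau\in\R$, and hence the fibers $A_{(h{\cdot}\tau,x)}$ and $A^x_{h{\cdot}\tau}$ are well defined for all $\tau\geq 0$ (compare Remark~\ref{rmk:asump in Arts}). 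First I would set up, exactly as in the cited proof, the product metrics ${\rm d}$ on $\Om(g)\times\R^n\times\R^m$ and on $\Om(g)\times\R^m$.

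For the first assertion, fix $\delta>0$ and a bounded set $D\subset\R^m$, and choose $\ep>0$ with $\ep\leq\delta$. Since $\A$ is the global attractor for $\pi$ and $\Om(g)\times K_0\times D$ is bounded, there is $T=T(D,\delta)$ such that ${\rm dist}\big(\pi(\tau,\Om(g),K_0,D),\A\big)<\ep$ for all $\tau\geq T$. Then for every $\tau\geq T$, $h\in\Om(g)$, $x\in K_0$, $y_0\in D$ one selects $(\tilde h,\tilde x)\in\Om(g)\times K_0$ and $\tilde y\in A_{(\tilde h,\tilde x)}$ with $d(h{\cdot}\tau,\tilde h)<\ep$, $|x-\tilde x|<\ep$ and $|y(\tau,h,x,y_0)-\tilde y|<\ep$; since ${\rm d}\big((h{\cdot}\tau,x),(\tilde h,\tilde x)\big)<\ep\leq\delta$, definition~\eqref{eq:inflated} gives $\tilde y\in A_{(h{\cdot}\tau,x)}[\delta]$, hence ${\rm dist}\big(y(\tau,h,x,y_0),A_{(h{\cdot}\tau,x)}[\delta]\big)<\ep$. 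Taking the supremum over $h\in\Om(g)$, $x\in K_0$, $y_0\in D$ and letting $\tau\to\infty$ yields the claim.

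For the second assertion, Assumption~\ref{assu:cont A^x 2} together with compactness of $K_0$ makes $x\mapsto\A^x$ uniformly continuous, so given $\ep\leq\delta$ there is $\tilde\delta<\ep/2$ with $d_H(\A^x,\A^{\tilde x})<\ep/2$ whenever $|x-\tilde x|\leq\tilde\delta$. Choosing $T=T(D,\tilde\delta)$ so that ${\rm dist}\big(\pi(\tau,\Om(g),K_0,D),\A\big)<\tilde\delta$ for $\tau\geq T$, the same selection produces $(\tilde h,\tilde x)$ and $\tilde y\in A_{(\tilde h,\tilde x)}$ at distance less than $\tilde\delta$ from $(h{\cdot}\tau,x,y(\tau,h,x,y_0))$; then $(\tilde h,\tilde y)\in\A^{\tilde x}$, so there is $(h^*,y^*)\in\A^x$ with $d(\tilde h,h^*)<\ep/2$ and $|\tilde y-y^*|<\ep/2$, whence $d(h{\cdot}\tau,h^*)<\tilde\delta+\ep/2<\ep\leq\delta$, so $y^*\in A^x_{h{\cdot}\tau}[\delta]$ by~\eqref{eq:inflated 2}, and $|y(\tau,h,x,y_0)-y^*|<\tilde\delta+\ep/2<\ep$; supremum and $\tau\to\infty$ finish it. For the last assertion, under Assumption~\ref{assu:cont A_(h,x) 2} one invokes~\cite[Theorem~4.2]{chks}, which gives that continuity of the section map $(h,x)\mapsto A_{(h,x)}$ forces $\{A_{(h,x)}\}_{(h,x)\in\Om(g)\times K_0}$ to be a uniform pullback attractor over the compact base $\Om(g)\times K_0$; since that base is compact and invariant, uniform pullback attraction is equivalent to uniform forward attraction, which is exactly the stated limit.

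There is essentially no hard part here: the only thing to verify carefully is that nothing in the original argument used the hull $\mathcal{H}$ beyond its being a compact metric space on which the shift acts as a continuous flow and over which the relevant attractors exist — all of which transfer to $\Om(g)$ via Proposition~\ref{prop:existe atractor 2} and the invariance of $\Om(g)$.
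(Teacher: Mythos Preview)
Your proposal is correct and matches the paper's approach exactly: the paper omits the proof entirely, noting that it is a verbatim repetition of the proof of Theorem~\ref{teor:param inflated pullb att} with $\mathcal{H}$ replaced by $\Om(g)$, which is precisely what you have written out in detail. Your observation that nothing in the original argument uses $\mathcal{H}$ beyond its being a compact invariant base with the relevant attractors (supplied here by Proposition~\ref{prop:existe atractor 2}) is the right justification.
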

At this point we state the main theorem in this section, proving that the solutions $y_{\ep_k}(\tau)$ track  the fibers of the parametrically inflated pullback attractor along arcs of length $T$ of trajectories in the omega-limit set $\Om(g)$ and the solution $x_0(\ep_k\tau)$.
\begin{thm}\label{teor:main omega-lim}
Under Assumptions~{\rm \ref{asu:1}} and~{\rm\ref{asu:3}}, for each $\ep>0$  let $(x_\ep(t),y_\ep(t))$ be the solution of~\eqref{eq:slowfastent}-\eqref{eq:initial values}. Then, given $t_0>0$, for $\ep$ small enough, $(x_\ep(t),y_\ep(t))$ can be extended to $t\in [0,t_0]$. Furthermore:
\begin{itemize}
\item[\rm{(i)}]  Every sequence $\ep_j\to 0$ has a subsequence, say $\ep_k\downarrow 0$, such that $x_{\ep_k}(t)$ converges uniformly for $t\in [0,t_0]$ to a solution of the differential inclusion~\eqref{eq:diff incl}.
Let the limit be $x_0(t)$, for $t\in [0,t_0]$.
\item[\rm{(ii)}] For the fast variables in the fast timescale $y_{\ep_k}(\tau)$, the behaviour is as follows. For a fixed small $r>0$, let $K_0$ be the compact tubular set defined in~\eqref{eq:K0} and
let $\A\subset \Om(g) \times K_0\times \R^m$  be the global attractor  for the skew-product semiflow~\eqref{eq:sk-pr 2}. Then, given $\delta >0$, there exist times $T>0$, $\tau_0>0$, a constant $0<\tilde \delta\leq \delta$, and an integer $k_0>0$ with $\tau_0+2T<t_0/\ep_{k_0}$ such that for every sequence of maps $\{h_i\}_{i\geq 0}\subset \Om(g)$ with $d\big(g{\cdot}(\tau_0+iT),h_i\big)<\tilde\delta$  for $i\geq 0$, for every $k\geq k_0$
and for every  $\tau\in[\tau_0+(i+1)T,\tau_0+(i+2)T]\cap [\tau_0+T,t_0/\ep_k]$ $($if any$)$, it holds that
\begin{equation}\label{eq:teorema 2}
 {\rm dist}\big(y_{\ep_k}(\tau),A_{(h_i{\cdot}(\tau-\tau_0-iT),x_0(\ep_k\tau))}[\delta]\big)\leq  \delta\,.
\end{equation}
\end{itemize}
\end{thm}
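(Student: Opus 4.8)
The proof runs along the lines of that of Theorem~\ref{teor:main}, the only genuinely new point being that the attractor now lives over the compact base $\Om(g)$ while the forward orbit $\{g{\cdot}\tau\}_{\tau\geq 0}$ merely \emph{approaches} $\Om(g)$; one must therefore wait a time $\tau_0$ for $g{\cdot}\tau$ to get $\tilde\delta$-close to $\Om(g)$ and then interpose the maps $h_i\in\Om(g)$ between $g$ and $\Om(g)$. Part (i) is proved exactly as in Theorem~\ref{teor:main}~(i): by Remark~\ref{rmk:asump in Arts} our hypotheses are equivalent to those of Artstein~\cite{paper:ZA99}, so Theorem~5.3 there gives extendability of $(x_\ep(t),y_\ep(t))$ to $[0,t_0]$ for $\ep$ small (using the growth bound in Assumption~\ref{asu:1}~(i)) and the uniform convergence of a subsequence $x_{\ep_k}(t)$ to a solution of~\eqref{eq:diff incl}; inspection of Artstein's argument identifies the measures as projections of $\pi^x$-invariant measures supported on $\A^x=\A^x_{\Om(g)}$.

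For (ii) I would first fix the constants, in a nested order. Using Proposition~\ref{prop:existe atractor 2} choose $\rho_0>0$ with $\bigcup_{(h,x)\in\Om(g)\times K_0}A_{(h,x)}\subset\overline B(0,\rho_0)$; by Remark~\ref{rmk:growth bound}~(1) (that is, \cite[Proposition~6.6]{paper:ZA99}) choose $\rho_1>0$ bounding $|y_\ep(\tau)|$ for all $\tau\in[0,t_0/\ep]$ and all small $\ep$; set $D:=\overline B(0,\rho)\subset\R^m$ with $\rho\geq\max(|y_0|,\rho_0+1,\rho_1)$. Given $0<\delta<1$, Theorem~\ref{teor:param inflated pullb att 2} yields $T>0$ with ${\rm dist}\big(y(\sigma,h,x,D),A_{(h{\cdot}\sigma,x)}[\delta]\big)<\delta/2$ for all $h\in\Om(g)$, $x\in K_0$ and $\sigma\geq T$. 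Let $K:=y\big([0,2T]\times\Om(g)\times K_0\times D\big)\subset\R^m$, which is compact, fix a relatively compact open $U$ with $K\subset U\subset\overline U$, write the family $y'=h(x,y,\tau)$, $h\in\mathcal{H}$, as $y'=G(x,y,h{\cdot}\tau)$ with $G\colon K_0\times\R^m\times\mathcal{H}\to\R^m$, $G(x,y,h):=h(x,y,0)$, and let $L=L(K_0,\overline U)$ be the Lipschitz constant in $y$ on $\overline U$ inherited from Assumption~\ref{asu:1}~(iii). Pick $\sigma>0$ with $(\sigma/L)\big(e^{L2T}-1\big)\leq\delta/2$. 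By uniform continuity of $G$ on the compact set $K_0\times K\times\mathcal{H}$ pick $\eta>0$ so that $d(h',h'')<\eta$ forces $|G(x,y,h')-G(x,y,h'')|<\sigma/2$ for $x\in K_0$, $y\in K$; by uniform continuity of the base flow on $[0,2T]\times\mathcal{H}$ pick $0<\tilde\delta\leq\delta$ so that $d(a,b)<\tilde\delta$ forces $d(a{\cdot}s,b{\cdot}s)<\eta$ for all $s\in[0,2T]$. Finally take $\tau_0>0$ with $d(g{\cdot}\tau,\Om(g))<\tilde\delta$ for all $\tau\geq\tau_0$ (possible since $\Om(g)$ is the omega-limit set of $g$; this also makes the existence of admissible sequences $\{h_i\}\subset\Om(g)$ in the statement a non-empty condition), and an integer $k_0$ with $\tau_0+2T<t_0/\ep_{k_0}$ and such that for all $k\geq k_0$: $x_{\ep_k}(t)\in K_0$ on $[0,t_0]$ and $|G(x_{\ep_k}(\ep_k\tau),y,h)-G(x_0(\ep_k s),y,h)|\leq\sigma/2$ for all $h\in\mathcal{H}$, $y\in K$ and $\tau,s\in[0,t_0/\ep_k]$ with $|\tau-s|\leq2T$ (using uniform continuity of $G$, $x_{\ep_k}\to x_0$ uniformly, uniform continuity of $x_0$, and $\ep_k\downarrow0$).

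With these constants fixed I would run the same finite iteration over the intervals $[\tau_0+iT,\tau_0+(i+2)T]$, $i=0,1,\dots$, up to exhausting $[\tau_0+T,t_0/\ep_k]$. At step $i$ put $y_0^{ik}:=y_{\ep_k}(\tau_0+iT)$, which lies in $D$: for $i=0$ because $\rho\geq\rho_1$, and for $i\geq1$ from the conclusion of step $i-1$ evaluated at $\tau=\tau_0+iT$, since the inflated fibers sit inside $\overline B(0,\rho_0)$ and $\delta<1$. Fixing $s$ in the relevant interval, compare $y_{\ep_k}(\cdot)$, which solves $y'=G(x_{\ep_k}(\ep_k\tau),y,g{\cdot}\tau)$, with $\tilde z(\tau):=y\big(\tau-\tau_0-iT,h_i,x_0(\ep_k s),y_0^{ik}\big)$, which solves $y'=G\big(x_0(\ep_k s),y,h_i{\cdot}(\tau-\tau_0-iT)\big)$, stays in $K$ on $[\tau_0+iT,\tau_0+(i+2)T]$ by construction of $K$ and~\eqref{eq:translated}, and agrees with $y_{\ep_k}$ at $\tau=\tau_0+iT$. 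Along $\tilde z$ the mismatch of the two vector fields is, for $\tau$ in the interval, at most $|G(x_{\ep_k}(\ep_k\tau),\tilde z,g{\cdot}\tau)-G(x_{\ep_k}(\ep_k\tau),\tilde z,h_i{\cdot}(\tau-\tau_0-iT))|+|G(x_{\ep_k}(\ep_k\tau),\tilde z,h_i{\cdot}(\tau-\tau_0-iT))-G(x_0(\ep_k s),\tilde z,h_i{\cdot}(\tau-\tau_0-iT))|\leq\sigma/2+\sigma/2=\sigma$, the first term because $d\big(g{\cdot}\tau,h_i{\cdot}(\tau-\tau_0-iT)\big)=d\big((g{\cdot}(\tau_0+iT)){\cdot}s',h_i{\cdot}s'\big)<\eta$ with $s'=\tau-\tau_0-iT\in[0,2T]$ and $d(g{\cdot}(\tau_0+iT),h_i)<\tilde\delta$, the second by the choice of $k_0$. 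Lemma~\ref{lem:comparacion} then gives $|y_{\ep_k}(\tau)-\tilde z(\tau)|\leq(\sigma/L)\big(e^{L2T}-1\big)\leq\delta/2$ on the interval; evaluating at $\tau=s$ and combining with ${\rm dist}\big(y(s-\tau_0-iT,h_i,x_0(\ep_k s),D),A_{(h_i{\cdot}(s-\tau_0-iT),x_0(\ep_k s))}[\delta]\big)<\delta/2$ (valid since $y_0^{ik}\in D$ and $s-\tau_0-iT\geq T$) yields~\eqref{eq:teorema 2} at $\tau=s$. Iterating over $i$ completes (ii).

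The main obstacle — the only place where the argument goes beyond that of Theorem~\ref{teor:main} — is precisely the bookkeeping of this last paragraph: one has to route the hull-error $\tilde\delta$ through the time-$2T$ evolution of the base flow and then through the modulus of continuity of $G$, which is why $\tau_0$ (the waiting time for $g{\cdot}\tau$ to be $\tilde\delta$-close to $\Om(g)$), $\tilde\delta$, $\eta$ and $\sigma$ must be picked in the nested order given above. Everything else is verbatim the proof of Theorem~\ref{teor:main}~(ii).
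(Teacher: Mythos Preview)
Your proof is correct and follows the same overall strategy as the paper: an iterative comparison over intervals of length $2T$, using Lemma~\ref{lem:comparacion} and the forward attraction in Theorem~\ref{teor:param inflated pullb att 2}. The one noteworthy tactical difference is in how the hull error is absorbed. The paper runs a \emph{three}-term estimate ($\delta/3$ each): it first compares $y_{\ep_k}$ with the $g$-layer solution via Lemma~\ref{lem:comparacion}, then invokes uniform continuity of the \emph{cocycle map} $(\tau,h,x,z)\mapsto y(\tau,h,x,z)$ on $[T,2T]\times\mathcal{H}\times K_0\times D$ to pass from $g{\cdot}(\tau_0+iT)$ to $h_i$, and finally applies the forward attraction; consequently its first step integrates on $[0,\tau_0+2T]$, its $K$ is $y([0,\tau_0+2T]\times\mathcal{H}\times K_0\times D)$, and $D$ is chosen to contain $\{y(\tau,g,x,y_0)\mid\tau\geq0,\ x\in K_0\}$. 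You instead fold the hull error into the vector-field mismatch for Lemma~\ref{lem:comparacion} (routing $\tilde\delta$ through the base flow and then through the modulus of continuity of $G$), obtaining a cleaner two-term $\delta/2+\delta/2$ split; this lets you start the iteration at $\tau_0$ (using the uniform bound $\rho_1$ on $y_\ep$ from \cite[Proposition~6.6]{paper:ZA99} to place $y_{\ep_k}(\tau_0)\in D$) and work with the shorter $K=y([0,2T]\times\Om(g)\times K_0\times D)$ and the milder exponent $e^{L2T}$. Both routes are valid; yours is arguably tidier, while the paper's makes the role of the cocycle continuity more explicit.
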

\begin{proof}
First of all,  with the assumptions made we can apply Theorem~5.3 in~\cite{paper:ZA99} (see Remark~\ref{rmk:asump in Arts}). Then, (i) follows, as it has been explained in the proof of Theorem~\ref{teor:main}.

To prove (ii), the argumentation is also basically the same as in the proof of Theorem~\ref{teor:main}, but there are some additional technical details which should be made clear. Given $0<\delta<1$, the value of the time $T$ is obtained from Theorem~\ref{teor:param inflated pullb att 2} applied to a compact set $D\subset\R^m$. This time the set $D$ should  contain the whole set $\{y(\tau,g,x,y_0)\mid \tau\geq 0,\, x\in K_0\}$ which is bounded because of Assumption~\ref{asu:3} and, roughly speaking, a neighbourhood of radius $1$ of the set
\begin{equation*}
  \bigcup_{(h,x)\in \Om(g)\times K_0} A_{(h,x)} \subset  \R^m\,,
\end{equation*}
so that, whenever $z$ is less than $1$ away from this set, then $z\in D$. Then, for $\tau\geq T$,
\begin{equation}\label{eq:unif atract 3}
  \sup_{h\in \Om(g),\, x\in K_0} {\rm dist}\big(y(\tau,h,x,D),A_{(h{\cdot}\tau,x)}[\delta]\big)  < \frac{\delta}{3}\,.
\end{equation}
Next, for  $\delta>0$, $T>0$ and $D$, we apply the uniform continuity of the cocycle map $y$ on the compact subset $[T,2T]\times \mathcal{H}\times K_0\times D$ to find a $0<\tilde \delta\leq \delta$ such that, if $h,\tilde h\in\mathcal{H} $ are such that  $d(h,\tilde h)<\tilde \delta$, then
\begin{equation}\label{eq:1}
  \big|y(\tau,h,x,z)-y(\tau,\tilde h,x,z)\big|<\frac{\delta}{3}\quad\hbox{for all}\; \tau\in  [T,2T]\,,\;x\in K_0\,,\; z\in D\,.
\end{equation}
Associated to this value of $\tilde \delta>0$, there is a time $\tau_0>0$ such that ${\rm dist}\big(g{\cdot}\tau,\Om(g) \big)<\tilde\delta$ for every $\tau\geq \tau_0$. This follows easily from the definition of the omega-limit set, arguing by contradiction. As a consequence, there exist  sequences of maps $\{h_i\}_{i\geq 0}\subset \Om(g)$ with $d\big(g{\cdot}(\tau_0+iT),h_i\big)<\tilde\delta$ for each $i\geq 0$. Let $\{h_i\}_{i\geq 0}\subset \Om(g)$ be any of them.

The next step is to consider the compact subset
\begin{equation*}
  K:=y\big([0,\tau_0+2T]\times \mathcal{H}\times K_0\times D \big) \subset \R^m
\end{equation*}
and associated to it, a Lipschitz constant $L>0$ such that for the map $G$ in~\eqref{eq:G def}, $
      |G(x,y_1,h{\cdot}\tau)-G(x,y_2,h{\cdot}\tau)|\leq L\,|y_1-y_2|$
for every $h\in\mathcal{H}$, $y_1,y_2\in K$, $x\in K_0$ and $\tau\in \R$. Then, $\sigma>0$ is chosen so that
\begin{equation*}
\des\frac{\sigma}{L}\,\big(e^{L(\tau_0+2T)}-1\big)\leq \des\frac{\delta}{3}\,,
\end{equation*}
and associated to it, we find an integer $k_0>0$ with  $\tau_0+2T<t_0/\ep_{k_0}$ and so that for every $k\geq k_0$, $x_{\ep_k}(t)\in K_0$ for every $t\in [0,t_0]$ and
\begin{equation}\label{eq:G 2}
  \big|G(x_{\ep_k}(\ep_k\tau),y,h)-G(x_0(\ep_k s),y,h)\big|\leq \sigma
\end{equation}
for every $\tau,s\in [0,t_0/\ep_k]$ with $|\tau-s|\leq \tau_0+2T$,  $y\in K$ and $h\in\mathcal{H}$.

At this point the iterative process starts. In the first step,  relation~\eqref{eq:teorema 2} is proved with $i=0$, i.e., with $h_0\in \Om(g)$ and for $\tau\in [\tau_0+T,\tau_0+2T]$. To do it, for each fixed $s\in [\tau_0+T,\tau_0+2T]$, $y_{\ep_k}(s)$ is approximated by the value at $\tau=s$ of the solution $y(\tau,g,x_0(\ep_k s),y_0)$ integrated on the interval $[0,\tau_0+2T]$. By  construction, this solution lies in $K$ and using~\eqref{eq:G 2} we can  apply Lemma~\ref{lem:comparacion} to get that
\[
\big|y_{\ep_k}(s)-\left.y(\tau,g,x_0(\ep_k s),y_0)\right|_{\tau=s}\big|\leq  \des\frac{\delta}{3}\quad\hbox{for all}\;s\in [\tau_0+T,\tau_0+2T]\,.
\]
Now, for $\tau\in [\tau_0+T,\tau_0+2T]$, by the cocycle identity, we rewrite $y(\tau,g,x_0(\ep_k s),y_0)=y(\tau-\tau_0,g{\cdot}\tau_0,x_0(\ep_k s),y(\tau_0,g,x_0(\ep_k s),y_0) )$. Since we have that $\tau-\tau_0\in [T,2T]$,  $y(\tau_0,g,x_0(\ep_k s),y_0)\in D$ by the choice of $D$, and $d(g{\cdot}\tau_0,h_0)<\tilde\delta$, relation~\eqref{eq:1} applies when we compare with $y(\tau-\tau_0,h_0,x_0(\ep_k s),y(\tau_0,g,x_0(\ep_k s),y_0) )$, so that for all $s\in [\tau_0+T,\tau_0+2T]$,
\[
\big| \left.y(\tau,g,x_0(\ep_k s),y_0)\right|_{\tau=s} - \left. y(\tau-\tau_0,h_0,x_0(\ep_k s),y(\tau_0,g,x_0(\ep_k s),y_0))\right|_{\tau=s}\big|<\frac{\delta}{3}\,.
\]
Finally, since $h_0\in \Om(g)$ and $\tau-\tau_0\geq T$, we can use~\eqref{eq:unif atract 3} to complete the proof of~\eqref{eq:teorema 2} on $[\tau_0+T,\tau_0+2T]$ by the triangle inequality.

Before we proceed to the second step, we note that in particular
\[
{\rm dist}\big(y_{\ep_k}(\tau_0+T),A_{(h_0{\cdot}T,x_0(\ep_k(\tau_0+T)))}[\delta]\big)\leq  \delta\,.
\]
By the choice of $D$, this implies that $y_{\ep_k}(\tau_0+T)\in D$ and we can reproduce the same arguments as before for each fixed  $s\in [\tau_0+2T,\tau_0+3T]$,  approximating $y_{\ep_k}(s)$ by the value at $\tau=s$ of the solution $y(\tau,g,x_0(\ep_k s),\tau_0+T,y_{\ep_k}(\tau_0+T))$ integrated on the interval $[\tau_0+T,\tau_0+3T]$, and  using this time the map $h_1\in \Om(g)$ that satisfies $d(g{\cdot}(\tau_0+T),h_1)<\tilde\delta$. At this point, it seems unnecessary to give more details of the proof. The proof is finished.
\end{proof}
\begin{rmk}
  In the same situation as in the previous theorem, in the particular case when the orbit of $g$ is asymptotic to an orbit inside $\Om(g)$, i.e., when there is $h_0\in \Om(g)$ such that $\lim_{\tau\to\infty} d(g{\cdot}\tau,h_0{\cdot}\tau)=0$, one can simply follow the orbit of $h_0$ for the tracking.
  \end{rmk}
We now give an analog of  Theorem~\ref{teor:valor final} in the more general setting of this section. Given  a compact set $K_0\subset \R^n$ and a  map $h\notin \Om(g)$ with $\rho={\rm dist}\big(h,\Om(g) \big)>0$, let us introduce  for $x\in K_0$ and $\delta\geq \rho$, the fiber sets
\[
B_{(h,x)}[\delta]:=\bigcup_{(\tilde h,\tilde x)\in \Theta_{(h,x,\delta)}} A_{(\tilde h,\tilde x)}
\]
for the set $\Theta_{(h,x,\delta)}=\big\{(\tilde h,\tilde x)\in \Om(g)\times K_0  \mid  d(\tilde h, h)\leq \delta,\,|\tilde x-x|\leq \delta\big\}$.
\begin{thm}\label{teor:valor final 2}
Under Assumptions~{\rm \ref{asu:1}} and~{\rm\ref{asu:3}}, suppose  that $x_{\ep_k}(t)$ converges to $x_0(t)$  uniformly on $[0,t_0]$ and let the compact set $K_0$ and the global attractor $\A$ be the ones determined in Theorem~{\rm \ref{teor:main omega-lim}}.
For each integer $k\geq 1$, let
\begin{align*}
 \delta_k:=\inf\big\{\delta>0 \mid\;  &{\rm dist}\big(g{\cdot}(t_0/\ep_k),\Om(g)\big)\leq \delta\;\\&\hbox{and}\;\, {\rm dist}\big(y_{\ep_k}(t_0/\ep_k),B_{(g{\cdot}(t_0/\ep_k),x_0(t_0))}[\delta]\big)\leq  \delta  \big\}\,.
\end{align*}
 Then:
 \begin{itemize}
   \item[\rm{(i)}] $\des\lim_{k\to \infty} \delta_k =0$;
   \item[\rm{(ii)}] $\des\lim_{k\to \infty}  {\rm dist}\big(y_{\ep_k}(t_0/\ep_k),B_{(g{\cdot}(t_0/\ep_k),x_0(t_0))}[\delta_k]\big) =0$;
 \item[\rm{(iii)}] if $g{\cdot}(t_0/\ep_k)\to h_0$ in $(\mathcal{H},d)$ as $k\to\infty$, then $h_0\in \Om(g)$ and
 \begin{equation*}
   \lim_{k\to \infty}  {\rm dist}\big(y_{\ep_k}(t_0/\ep_k),A_{(h_0,x_0(t_0))}\big) =0\,.
 \end{equation*}
 \end{itemize}
\end{thm}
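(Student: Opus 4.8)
The plan is to follow the proof of Theorem~\ref{teor:valor final} closely, replacing Theorem~\ref{teor:main} by its relaxed counterpart Theorem~\ref{teor:main omega-lim} and inserting one bookkeeping step that rewrites the fibers delivered by the tracking statement~\eqref{eq:teorema 2} --- which are indexed by a point $h_i{\cdot}s$ lying on an orbit \emph{inside} $\Om(g)$ that shadows $g{\cdot}(t_0/\ep_k)$ --- in terms of the sets $B_{(h,x)}[\,\cdot\,]$, which are indexed by $g{\cdot}(t_0/\ep_k)$ itself. Throughout I keep $K_0$, $\A$ and $x_0$ as in Theorem~\ref{teor:main omega-lim} and write $\rho_k:={\rm dist}\big(g{\cdot}(t_0/\ep_k),\Om(g)\big)$; note $\rho_k\to 0$ simply because $t_0/\ep_k\to\infty$ and $\Om(g)$ is the omega-limit set of $g$, so the first condition in the definition of $\delta_k$ can always be met with a tolerance tending to $0$.

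For part~(i), fix $0<\delta<1$ and apply Theorem~\ref{teor:main omega-lim}~(ii), obtaining $T$, $\tau_0$, $\tilde\delta$ and $k_0$. Because the base flow is uniformly continuous on the compact set $[0,2T]\times\mathcal{H}$, after shrinking $\tilde\delta$ --- and correspondingly replacing $\tau_0$ by some $\tau_0+NT$ and enlarging $k_0$, which does not invalidate the statement of Theorem~\ref{teor:main omega-lim} --- I may assume in addition that $d(h{\cdot}s,h'{\cdot}s)\le\delta$ whenever $d(h,h')\le\tilde\delta$ and $0\le s\le 2T$. Fix $k\ge k_0$, set $\tau:=t_0/\ep_k$, pick the index $i=i(k)\ge 0$ with $\tau\in[\tau_0+(i+1)T,\tau_0+(i+2)T]$ and put $s_k:=\tau-\tau_0-iT\in[T,2T]$. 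Choosing maps $h_i\in\Om(g)$ with $d\big(g{\cdot}(\tau_0+iT),h_i\big)<\tilde\delta$ as in the proof of Theorem~\ref{teor:main omega-lim}, inequality~\eqref{eq:teorema 2} at $\tau=t_0/\ep_k$ (so that $x_0(\ep_k\tau)=x_0(t_0)$) reads
\[
{\rm dist}\big(y_{\ep_k}(t_0/\ep_k),A_{(h_i{\cdot}s_k,\,x_0(t_0))}[\delta]\big)\le\delta .
\]
Since $g{\cdot}(t_0/\ep_k)=\big(g{\cdot}(\tau_0+iT)\big){\cdot}s_k$, the added equicontinuity gives $d\big(h_i{\cdot}s_k,g{\cdot}(t_0/\ep_k)\big)\le\delta$, whence $\rho_k\le\delta$ and, by a triangle inequality on the indices, $A_{(h_i{\cdot}s_k,\,x_0(t_0))}[\delta]\subseteq B_{(g{\cdot}(t_0/\ep_k),\,x_0(t_0))}[2\delta]$ (the latter set is well defined because $h_i{\cdot}s_k\in\Om(g)$ by invariance and $2\delta\ge\rho_k$). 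Thus $\delta'=2\delta$ meets both conditions in the definition of $\delta_k$, so $\delta_k\le 2\delta$ for all $k\ge k_0$; as $\delta$ was arbitrary, $\delta_k\to 0$. I expect this bookkeeping to be the only real obstacle: one must propagate the $\tilde\delta$-closeness over the bounded time $s_k$ without uncontrolled expansion, and the clean way is precisely the preliminary shrinking of $\tilde\delta$ against the modulus of uniform continuity of the flow on $[0,2T]\times\mathcal{H}$, which is legitimate because $T$ is already fixed once $\delta$ is chosen (without it one would only get $\delta_k\lesssim\delta+\omega_T(\tilde\delta)$, whose right-hand side need not tend to $0$ with $\delta$ since $T=T(\delta)$ may blow up).

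For part~(ii) it suffices to know that the infimum defining $\delta_k$ is attained. Because $(h,x)\in\Om(g)\times K_0\mapsto A_{(h,x)}$ is upper semicontinuous with compact values and $\Theta_{(h,x,\delta)}$ is compact, the set $B_{(h,x)}[\delta]$ is a nonempty compact subset of $\R^m$ for every $\delta\ge{\rm dist}(h,\Om(g))$, the map $\delta\mapsto B_{(h,x)}[\delta]$ is nondecreasing, and $B_{(h,x)}[\delta_0]=\bigcap_{\delta>\delta_0}B_{(h,x)}[\delta]$ (again by upper semicontinuity), so $\delta\mapsto{\rm dist}\big(y_{\ep_k}(t_0/\ep_k),B_{(g{\cdot}(t_0/\ep_k),\,x_0(t_0))}[\delta]\big)$ is nonincreasing and right-continuous. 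Together with the closedness of $\{\delta:\rho_k\le\delta\}$ this shows that the set of admissible $\delta$ is a half-line $[\delta_k,\infty)$, so ${\rm dist}\big(y_{\ep_k}(t_0/\ep_k),B_{(g{\cdot}(t_0/\ep_k),\,x_0(t_0))}[\delta_k]\big)\le\delta_k$, and~(ii) follows from~(i).

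For part~(iii) I would repeat the argument of Theorem~\ref{teor:valor final}~(iii). If $g{\cdot}(t_0/\ep_k)\to h_0$, then $\rho_k\le\delta_k\to 0$ forces $h_0\in\Om(g)$ since $\Om(g)$ is closed. By the triangle inequality,
\[
{\rm dist}\big(y_{\ep_k}(t_0/\ep_k),A_{(h_0,x_0(t_0))}\big)\le{\rm dist}\big(y_{\ep_k}(t_0/\ep_k),B_{(g{\cdot}(t_0/\ep_k),\,x_0(t_0))}[\delta_k]\big)+{\rm dist}\big(B_{(g{\cdot}(t_0/\ep_k),\,x_0(t_0))}[\delta_k],A_{(h_0,x_0(t_0))}\big),
\]
the first term tending to $0$ by~(ii). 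For the second, suppose along a subsequence it stays $\ge\rho>0$; then there are $y_k\in B_{(g{\cdot}(t_0/\ep_k),\,x_0(t_0))}[\delta_k]$ with ${\rm dist}\big(y_k,A_{(h_0,x_0(t_0))}\big)\ge\rho/2$, hence pairs $(h_k,x_k)\in\Om(g)\times K_0$ with $d\big(g{\cdot}(t_0/\ep_k),h_k\big)\le\delta_k$, $|x_k-x_0(t_0)|\le\delta_k$ and $y_k\in A_{(h_k,x_k)}$; thus $(h_k,x_k)\to(h_0,x_0(t_0))$ in $\Om(g)\times K_0$ and, by the upper semicontinuity of $(h,x)\mapsto A_{(h,x)}$ there, ${\rm dist}\big(A_{(h_k,x_k)},A_{(h_0,x_0(t_0))}\big)\to 0$, so ${\rm dist}\big(y_k,A_{(h_0,x_0(t_0))}\big)\to 0$ --- a contradiction.
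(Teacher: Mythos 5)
Your proof is correct and follows essentially the same approach as the paper: for part (i) you apply Theorem~\ref{teor:main omega-lim}~(ii), additionally shrink $\tilde\delta$ against the modulus of uniform continuity of the base flow on a bounded time interval so that the shadowing point $h_i{\cdot}s_k\in\Om(g)$ stays within $\delta$ of $g{\cdot}(t_0/\ep_k)$, and then convert the inflated $A$-fiber into a $B$-fiber indexed by $g{\cdot}(t_0/\ep_k)$ by a triangle inequality on the indices (the paper normalizes with $\delta/2$ to land at $\delta_k\le\delta$, whereas you use $\delta$ and obtain $\delta_k\le 2\delta$; this is immaterial). Part (iii) is the same contradiction argument via upper semicontinuity of $(h,x)\mapsto A_{(h,x)}$ as in Theorem~\ref{teor:valor final}~(iii). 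The only place where you go beyond the paper is part (ii), which the paper dismisses as ``immediate'': your observation that $B_{(h,x)}[\delta_0]=\bigcap_{\delta>\delta_0}B_{(h,x)}[\delta]$ by upper semicontinuity and compactness, yielding right-continuity of $\delta\mapsto{\rm dist}(y_{\ep_k},B[\delta])$ and hence attainment of the infimum, is a clean and correct way to fill that gap.
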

\begin{proof}
To prove (i), fixed a $\delta>0$,  the application of Theorem~\ref{teor:main omega-lim} (ii) to $\delta/2$ provides us with values of $T>0$, $\tau_0>0$, $0<\tilde\delta\leq \delta/2$ and an integer $k_0\geq 1$. Note that we can additionally take the constant $\tilde \delta$  small enough so that, if $d(h,\tilde h)<\tilde\delta$, then $d(h{\cdot}\tau,\tilde h{\cdot}\tau)<\delta/2$ for every $\tau\in [T,2T]$, by the uniform continuity of the flow map on $[T,2T]\times\mathcal{H}$.
Then, for every $k\geq k_0$ let us take $i=i(k)$ such that $t_0/\ep_k\in [\tau_0+(i+1)T,\tau_0+(i+2)T]$ (so that $t_0/\ep_k-\tau_0-iT\in [T,2T]$). If $h_i\in \Om(g)$ satisfies    $d\big(g{\cdot}(\tau_0+iT),h_i\big)<\tilde\delta$, then
${\rm dist}\big(y_{\ep_k}(t_0/\ep_k),A_{(h_i{\cdot}(t_0/\ep_k-\tau_0-iT),x_0(t_0))}[\delta/2]\big)\leq  \delta/2$.
At this point just note that $d(g{\cdot}(t_0/\ep_k),h_i{\cdot}(t_0/\ep_k-\tau_0-iT))<\delta/2$, so that it is easy to check that
\[
A_{(h_i{\cdot}(t_0/\ep_k-\tau_0-iT),x_0(t_0))}[\delta/2]\subseteq B_{(g{\cdot}(t_0/\ep_k),x_0(t_0))}[\delta]
\]
from where it follows that ${\rm dist}\big(y_{\ep_k}(t_0/\ep_k), B_{(g{\cdot}(t_0/\ep_k),x_0(t_0))}[\delta]\big)\leq  \delta/2<\delta$, that is, $\delta_k\leq \delta$ for all $k\geq k_0$ and the proof of (i) is finished. The limit in (ii) is immediate.

To finish, to prove the statement in (iii), argue as in the proof of Theorem~\ref{teor:valor final} (iii) by using the triangle inequality and an argument by contradiction to prove the desired formula. The proof is finished.
\end{proof}

\begin{rmk}
When either Assumption~{\rm\ref{assu:cont A^x 2}} or Assumption~{\rm\ref{assu:cont A_(h,x) 2}} is added in Theorem~\ref{teor:main omega-lim} (ii) (or Theorem~\ref{teor:valor final 2}) for the compact set $K_0$, then the results can be reformulated, offering a more precise description of the zone of attraction, in the line of Theorems~\ref{teor:main 2} (and its Corollary~\ref{coro 1}) and~\ref{teor:main 3}, respectively. We omit the precise statements not to make the paper too long.
\end{rmk}

\end{document}